\newcommand{\F}{\mathbb{F}}
\newcommand{\Z}{\mathbb{Z}}
\newcommand{\N}{\mathbb{N}}
\newcommand{\R}{\mathbb{R}}
 \renewcommand{\to}{\rightarrow}
\DeclareMathOperator{\rk}{rk}
\DeclareMathOperator{\Gal}{Gal}
\DeclareMathOperator{\GL}{GL}
\DeclareMathOperator{\PGL}{PGL}
\DeclareMathOperator{\Max}{Max}
\newcommand{\imod}[1]{\allowbreak\mkern4mu({\operator@font mod}\,\,#1)}
\newtheorem{theorem}{Theorem}
\newtheorem*{conj*}{Conjecture}
\newtheorem{lemma}[theorem]{Lemma}
\newtheorem{propn}[theorem]{Proposition}
\newtheorem{corollary}[theorem]{Corollary}
\newtheorem{thm}{Theorem}[section]
\newtheorem{cor}[thm]{Corollary}
\theoremstyle{definition}
\begin{document}

 \author{Damian Sercombe}
\address{D. Sercombe, Institute of Mathematics, Hebrew University, Jerusalem 91904, Israel}
\email{damian.sercombe@mail.huji.ac.il}

\author{Aner Shalev}
\address{A. Shalev, Institute of Mathematics, Hebrew University, Jerusalem 91904, Israel}
\email{shalev@math.huji.ac.il}

\title{Random generation of associative algebras}

\dedicatory{Dedicated to the memory of Peter Neumann}

\begin{abstract}

There has been considerable interest in recent decades in questions of random generation of finite and profinite groups, and finite simple groups in particular. In this paper we study similar notions for finite and profinite associative algebras. Let $k=\F_q$ be a finite field. Let $A$ be a finite dimensional, associative, unital algebra over $k$. Let $P(A)$ be the probability that two elements of $A$ chosen (uniformly and independently) at random will generate $A$ as a unital $k$-algebra. It is known that, if $A$ is simple, then $P(A) \to 1$ as $|A| \to \infty$. We extend this result to a large class of finite associative algebras. For $A$ simple, we find the optimal lower bound for $P(A)$ and we estimate the growth rate of $P(A)$ in terms of the minimal index $m(A)$ of any proper subalgebra of $A$. We also study the random generation of simple algebras $A$ by two elements that have a given characteristic polynomial (resp. a given rank). In addition, we bound above and below the minimal number of generators of general finite algebras. Finally, we let $A$ be a profinite algebra over $k$. We show that $A$ is positively finitely generated if and only if $A$ has polynomial maximal subalgebra growth. Related quantitative results are also established.
\end{abstract}

\subjclass[2020]{Primary 16P10; Secondary 15B52 \vspace{2mm}}

 \thanks{
\hspace{-6.25mm} Both authors are affiliated with the Institute of Mathematics, Hebrew University, Jerusalem 91904, Israel.\\[1ex]
DS was supported by a Post-Doctoral Fellowhip from ISF grant 686/17 of AS.
 AS was partially supported by ISF grant 686/17 and the Vinik Chair of mathematics which he holds.}

\maketitle

\section{Introduction}

In the past few decades there has been extensive research on random generation of finite and profinite groups with emphasis on finite simple groups. See for instance the survey articles \cite{L, Sh} and the references therein.

The study of random generation of associative algebras is less well developed. Consider the algebra $M_n(q)$ of $n \times n$ matrices over a finite field $\F_q$. In 1995 it was shown by Peter Neumann and Cheryl Praeger \cite{NP} that the probability that two matrices in $M_n(q)$, chosen independently under the uniform distribution, generate $M_n(q)$ as a $\F_q$-algebra tends to $1$ as $|M_n(q)| \to \infty$. See also the subsequent paper \cite{KMP} by Kravchenko, Mazur and Petrenko for additional results on random generation of finite and infinite algebras.

One can refine this problem and consider random generation of an algebra by two elements that satisfy a certain property. A matrix in $M_n(q)$ is \textit{cyclic} if its characteristic polynomial is equal to its minimal polynomial. Neumann and Praeger showed in \cite{NP} that almost all pairs of cyclic matrices in $M_n(q)$ will generate it as a $\F_q$-algebra.
Amongst other results of this flavour, we show that -- given a monic polynomial $f$ of degree $n$ over $\F_q$ -- almost all pairs of matrices in $M_n(q)$ with characteristic polynomial $f$ will generate it as a $\F_q$-algebra.

In this paper we study random generation of finite and profinite associative algebras, and we obtain some new results also in the case of simple algebras.

Let $k$ be a finite field, that is, $k=\F_q$ for some prime power $q$. Unless otherwise stated, all algebras in this paper are assumed to be over $k$, and are associative and unital. Subalgebras of a unital algebra are required to contain the multiplicative identity of the original algebra. We first focus on the study of finite algebras. Later on, we look at profinite algebras.

Let $A$ be an associative, unital, finite-dimensional algebra over $k$ (a.k.a. a \textit{finite algebra}). Let $A^{\times}$ denote the group of units of $A$. Let $A^N$ denote the set of nilpotent elements of $A$. The Jacobson radical $J(A)$ of $A$ is a nilpotent ideal of $A$. If $J(A)$ is trivial then $A$ is \textit{semisimple}.

In this paragraph we summarise the Wedderburn-Malcev Principal Theorem (Theorems $5.3.20$ and $5.3.21$ of \cite{R}). There exists a semisimple subalgebra $S$ of $A$ such that $A=S \oplus J(A)$ as vector spaces. If $S'$ is another subalgebra of $A$ satisfying $A=S' \oplus J(A)$ then $S'$ is conjugate to $S$ by an element of $1+J(A)$.
Wedderburn’s little theorem (Theorem $7.1.11$ of \cite{R}) states that all finite division algebras are fields. Combining this with another
theorem of Wedderburn (Theorem $2.1.8$ of \cite{R}), it follows that there is an algebra isomorphism $S \cong \prod_{i=1}^r M_{n_i}(q^{m_i})$
for some integers $r$, $n_1$, ..., $n_r$, $m_1$, ..., $m_r$ that is unique up to permutation of the factors.

Denote $n:=\min_{i=1,...,r}\{n_i\}$ and $m:=\min_{i=1,...,r}\{m_i\}$. Fix constants $c >1$ and $\lambda>0$. We say that $A$ is \textit{bounded by $(c,\lambda)$} if $r \leq \lambda c^{\min\{m,n\}/2}$ and $\dim J(A)/J(A)^2 \leq \log_q\lambda +{\min\{m,n\}^2}\log_q c$.


A subset $X$ of $A$ is a generating set if the set of all monomials in the elements of $X$ (including the trivial monomial) spans $A$ as a $k$-vector space. We define $P(A)$ to be the probability that two elements of $A$ chosen uniformly at random will generate $A$ as a (unital) $k$-algebra. That is,
$$P(A)=\frac{|\{(x,y) \in A \times A: \langle x,y\rangle=A\}|}{|A|^2}.$$

\begin{thm}\label{randomgeneration} Fix constants $1<c <q$ and $\lambda>0$. Let $A$ be a finite algebra, say $A = \big(\prod_{i=1}^r M_{n_i}(q^{m_i})\big) \oplus J(A)$, that is bounded by $(c,\lambda)$. Denote $n:=\min_{i=1,...,r}\{n_i\}$ and $m:=\min_{i=1,...,r}\{m_i\}$. Then $P(A) \to 1$ as $n \to \infty$, as $m \to \infty$ or as $q \to \infty$.
\end{thm}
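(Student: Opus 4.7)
The plan is to bound $1-P(A)$ via the union bound over maximal subalgebras,
\[
1-P(A) \;\leq\; \sum_{M} [A:M]^{-2},
\]
with $M$ ranging over maximal subalgebras of $A$, since $(x,y)$ fails to generate $A$ iff $\langle x,y\rangle$ lies in some maximal subalgebra. First I would observe that every maximal subalgebra $M$ of $A$ contains $J(A)^2$: otherwise $M+J(A)^2$ would be a subalgebra strictly larger than $M$ but still proper (as $J(A)^2$ is a proper ideal of $A$), contradicting maximality. So it suffices to enumerate the maximal subalgebras of $\tilde A := A/J(A)^2 = S \oplus V$, where $V := J(A)/J(A)^2$ is a square-zero $S$-bimodule.

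I would then split the maximal subalgebras of $\tilde A$ into two types. \emph{Type I:} those containing $V$ correspond bijectively to maximal subalgebras of $S = \prod_{i=1}^r M_{n_i}(q^{m_i})$, which are standardly of two forms: \emph{subdirect} (replace one factor by a maximal subalgebra of it) or \emph{diagonal} (graph of an isomorphism between two isomorphic factors). The subdirect contribution is bounded by $\sum_i \sum_{M''} [M_{n_i}(q^{m_i}):M'']^{-2}$, which, by extending the Neumann--Praeger argument of \cite{NP} to $M_n(q^m)$ viewed over $\F_q$ (classifying the maximal $\F_q$-subalgebras as field-extension, reducible/parabolic, and Singer-type cyclic subfield subalgebras), is $O(q^{-2mn})$ per $i$. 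Combined with $r \leq \lambda c^{\min\{m,n\}/2}$ and $c<q$, the total $r \cdot O(q^{-2mn})$ vanishes in each of the three limits. The diagonal contribution is at most $r^2 \cdot \max_i |\mathrm{Aut}_k M_{n_i}(q^{m_i})|\cdot q^{-2n_i^2 m_i} = O(r^2 \, m\, q^{-n^2 m})$, again killed by $r^2 \leq \lambda^2 c^{\min\{m,n\}}$ and $c<q$.

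\emph{Type II:} those $M$ not containing $V$ satisfy $M+V=\tilde A$, so by Wedderburn--Malcev $M=S'\oplus W$ for some Wedderburn complement $S'\subseteq M$ and some subspace $W\subseteq V$. Since $V^2=0$, the $S'$-bimodule structure on $V$ agrees with the $S$-bimodule structure and multiplicative closure is automatic; maximality of $M$ forces $W$ to be a maximal proper $S$-subbimodule of $V$. A direct count (parametrizing the Wedderburn complements modulo those stabilizing $S\oplus W$) shows that the number of $M$ with $M\cap V=W$ is at most $[V:W]$, so
\[
\sum_{M \text{ Type II}} [\tilde A:M]^{-2} \;\leq\; \sum_{W} [V:W]^{-1}.
\]
The number of maximal $S$-subbimodules of $V$ is $O(q^d)$ with $d=\dim_k V$, while every simple $S$-bimodule has $k$-dimension at least $n^2 m$ (both the diagonal type $n_i^2 m_i$ and the off-diagonal type $n_i n_j \cdot \mathrm{lcm}(m_i,m_j)$ exceed $n^2 m$ once $n_i\geq n$ and $m_i\geq m$), so $[V:W]\geq q^{n^2 m}$. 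The hypothesis $d\leq \log_q\lambda + \min\{m,n\}^2 \log_q c$, combined with $\min\{m,n\}^2\leq nm$ and $c<q$, then makes this sum tend to $0$ in each of the three limits.

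The main obstacle is obtaining a Neumann--Praeger style bound for $1-P(M_n(q^m))$ (viewed as an $\F_q$-algebra) that decays fast enough in $n$, $m$, or $q$ to dominate the factor $r\leq \lambda c^{\min\{m,n\}/2}$. I would expect this to follow from a direct adaptation of the method of \cite{NP}, enumerating the maximal $\F_q$-subalgebras of $M_n(q^m)$ and showing that their minimal index is $q^{\Omega(mn)}$.
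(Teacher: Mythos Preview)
Your strategy coincides with the paper's: a union bound over maximal subalgebras, followed by the trichotomy into (in the paper's notation) types $T1$ (one simple factor replaced by a maximal subalgebra), $T2$ (a diagonal between two isomorphic simple factors), and $T3$ (a Wedderburn complement plus a maximal sub-bimodule of $J(A)/J(A)^2$). The paper packages the count more cleanly by introducing the zeta function $\zeta_A(\epsilon) = \sum_{B \in \mathcal{B}} [A:B]^{-\epsilon}$ over conjugacy-class representatives, together with a lemma showing that $|A^\times|/|B^\times|$ lies within absolute constants of $|A|/|B|$ for every maximal $B$; this converts your raw sum $\sum_M [A:M]^{-2}$ into $O(\zeta_A(1))$ and reduces all three limits to a single estimate $\zeta_A(1)\to 0$.

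Two points in your outline need repair. First, your argument that every maximal subalgebra contains $J(A)^2$ is a non sequitur: ``$M+J(A)^2$ is proper because $J(A)^2$ is a proper ideal'' fails, since a maximal subalgebra plus any ideal not contained in it is always all of $A$. The claim is nonetheless true; the paper obtains it from the Iovanov--Sistko classification, and equivalently, once one knows $M\cap J(A)$ is a two-sided ideal (the nontrivial input), $J(A)/(M\cap J(A))$ is a simple $A$-bimodule on which $J(A)$ acts trivially by Nakayama, whence $J(A)^2\subseteq M$. Second, your asserted rate $O(q^{-2mn})$ for $\sum_{M''}[S_i:M'']^{-2}$ is too optimistic: the dominant contribution comes from the roughly $q^{m_i(n_i-1)}$ parabolics of index $q^{m_i(n_i-1)}$, giving only $O(q^{-m_i(n_i-1)})$. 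This weaker bound still suffices against $r\le\lambda c^{\min\{m,n\}/2}$ with $c<q$, but you should state it correctly; the paper's uniform version is $\zeta_{S_i}(1)\le(2(n_i-1)+\omega(m_i))\,q^{-m_in_i/2}$.
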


It is not true in general that $P(A) \to 1$ as $|A| \to \infty$. For example, let $A$ be as in the theorem above and suppose there exists a positive integer $i \le r$ such that $n_i=1$ and $m_i = 2$. Then $A$ has a maximal subalgebra $B$ satisfying $A/B \cong k$. Hence $|B|/|A| = q^{-1}$, so $1-P(A) \geq |B|^2/|A|^2=q^{-2}$. Fixing $q$ and letting $|A|$ tend to infinity
we see that $P(A) \le 1-q^{-2}$ is bounded away from $1$.

Moreover, let $A=k^r$ for some $r \in \N$. Then any maximal subalgebra $B$ of $A$ has codimension $1$, and it is easy to see
that $P(A)\to 0$ as $r \to \infty$. However, Theorem \ref{randomgeneration} implies the following known result.

\begin{cor}\label{randomgenerationcor} Let $A$ be a finite simple algebra. Then $P(A) \to 1$ as $|A| \to \infty$.
\end{cor}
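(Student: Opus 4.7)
The plan is to deduce the corollary as a direct consequence of Theorem \ref{randomgeneration}. The key observation is that every finite simple $k$-algebra trivially satisfies the boundedness hypothesis. Indeed, by Wedderburn's structure theorem any finite simple $\F_q$-algebra has the form $A \cong M_n(q^m)$ for some $n, m \geq 1$, so its Wedderburn decomposition has $r = 1$ and $J(A) = 0$. Consequently the two defining inequalities $r \leq \lambda c^{\min\{m,n\}/2}$ and $\dim J(A)/J(A)^2 \leq \log_q \lambda + \min\{m,n\}^2 \log_q c$ collapse to $1 \leq \lambda c^{\min\{m,n\}/2}$ and $0 \leq \log_q \lambda + \min\{m,n\}^2 \log_q c$, both of which hold for any $\lambda \geq 1$ and any $c > 1$.

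Given a sequence of finite simple algebras $A_i = M_{n_i}(q_i^{m_i})$ with $|A_i| = q_i^{m_i n_i^2} \to \infty$, I would argue by contradiction. Suppose $P(A_i) \not\to 1$; then along some subsequence $P(A_i) \leq 1 - \epsilon$ for a fixed $\epsilon > 0$. Since $|A_i| \to \infty$, after passing to a further subsequence one of $q_i, m_i, n_i$ tends to infinity. If $q_i \to \infty$, then eventually $q_i > 2$, and the observation above shows that each such $A_i$ is bounded by $(2, 1)$; Theorem \ref{randomgeneration} applied with $c = 2$, $\lambda = 1$ then yields $P(A_i) \to 1$. If instead $q_i$ stays bounded, I extract a sub-subsequence on which $q_i$ equals a fixed value $q \geq 2$, pick any $c \in (1, q)$, and invoke the theorem (with $\lambda = 1$) using whichever of $m_i \to \infty$ or $n_i \to \infty$ holds. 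In every branch we obtain $P(A_i) \to 1$, contradicting the assumed lower bound.

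There is no substantive obstacle in the argument; the only mild technicality is arranging the case split so that the constraint $c < q$ imposed by Theorem \ref{randomgeneration} can be met uniformly along the sequence, which is handled by stabilizing $q$ before choosing $c$ when $q$ remains bounded, and by taking $c = 2$ once $q$ is large enough otherwise.
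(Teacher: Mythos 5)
Your deduction is correct and is essentially the paper's own route: the paper explicitly notes that Theorem \ref{randomgeneration} implies this corollary, and its proof in Section \ref{proofrandomgeneration} runs the same machinery (the bound $1-P(A) \le \phi(1/2)^{-1}\zeta_A(1)$ together with Corollary \ref{zetatheoremcor}, which converts the limits in $n$, $m$, $q$ into $|A| \to \infty$ exactly as your subsequence argument does, using $|A|=q^{mn^2}$). Your observations that a simple algebra is trivially bounded by $(c,\lambda)$ and that the constraint $c<q$ can be met by stabilizing $q$ along a subsequence are correct and fill in the only bookkeeping the paper leaves implicit.
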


This corollary is somewhat more general than the Neumann-Praeger result stated above, in the sense that it also deals with $A = M_n(q^m)$
as a $\F_q$-algebra, but it is obtained in \cite{KMP} using different methods.

An equivalent formulation of Corollary \ref{randomgenerationcor} is as follows. Let $A$ be a simple algebra and consider the free associative algebra $k\langle X_1,X_2\rangle$. Then the probability that a randomly chosen $k$-algebra homomorphism $k\langle X_1,X_2\rangle \to A$ is surjective tends to $1$ as $|A| \to \infty$.

It is well known that any finite simple algebra is $2$-generated, see for instance Theorem $6.4$ of \cite{KMP}. So it follows from Corollary \ref{randomgenerationcor} that there exists an absolute constant $\delta>0$ such that $P(A) \geq \delta$ for all finite simple algebras $A$. In the following result, we find the best possible value for this constant.

\begin{thm}\label{minP} Let $A$ be a finite simple algebra. Then $P(A) \geq 3/8$, with equality if and only if $A=M_2(2)$.
\end{thm}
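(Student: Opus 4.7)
By the Wedderburn structure theorem, every finite simple $\F_q$-algebra has the form $A=M_n(\F_{q^m})$. The case $n=1$, when $A$ is a finite field, is immediate: the proper $\F_q$-subalgebras of $\F_{q^m}$ are exactly the proper subfields, and an elementary inclusion--exclusion over them gives $P(A)\ge 1-q^{-2}\ge 3/4$ when $m\ge 2$, and $P(A)=1$ when $m=1$. So I may assume $n\ge 2$.

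For the extremal case $A=M_2(\F_2)$ the plan is to enumerate all maximal $\F_2$-subalgebras and apply inclusion--exclusion directly. These maximal subalgebras are exactly four: the three parabolic subalgebras $P_L$ stabilizing the three lines $L\subset\F_2^2$, each of order $8$; and a unique copy of $\F_4$ generated over $\F_2$ by a matrix with characteristic polynomial $t^2+t+1$, of order $4$. Any two parabolics meet in the diagonal torus (order $4$); each parabolic meets $\F_4$ in $\{0,I\}$, since a non-scalar element of $\F_4$ has irreducible minimal polynomial over $\F_2$ and therefore stabilizes no line; all triple and quadruple intersections collapse to $\{0,I\}$. A routine inclusion--exclusion on $\bigcup_M(M\times M)\subset A\times A$ then yields
\[
|U|=(3\cdot 8^2 + 4^2)-(3\cdot 4^2+3\cdot 2^2)+(2^2+3\cdot 2^2)-2^2=160,
\]
whence $P(M_2(\F_2))=96/256=3/8$.

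For every remaining simple $A=M_n(\F_{q^m})$ with $n\ge 2$ my plan is to apply the union bound $1-P(A)\le\sum_M[A:M]^{-2}$, where $M$ ranges over the maximal $\F_q$-subalgebras of $A$. These split (by the invariant $M\cap Z(A)$) into three families: parabolics $P_V$ stabilizing proper $\F_{q^m}$-subspaces $V\subset(\F_{q^m})^n$; field-centralizer subalgebras $C_A(\F_{q^{md}})\cong M_{n/d}(\F_{q^{md}})$ for primes $d\mid n$; and subfield-restriction subalgebras $M_n(\F_{q^{m/p}})$ (and their twisted variants) for primes $p\mid m$. Counting each family via Gaussian binomials and Skolem--Noether and estimating the indices, the sum is $<5/8$ for all parameters except for a short residual list of small cases (essentially $M_2(\F_3)$, $M_2(\F_4)$, and $M_3(\F_2)$).

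The main obstacle is this residual list: for $M_3(\F_2)$ the crude union bound already gives $7/8$, so one must refine by inclusion--exclusion on the lattice of maximal subalgebras, exploiting that pairwise intersections of parabolics are stabilizers of finer flags (hence substantially smaller than either factor) and that field subalgebras meet parabolics and each other in small subfields. Alternatively one can invoke Corollary~\ref{randomgenerationcor} to reduce to a finite list of small cases and verify each by direct computation. Neither step is conceptually deep, but both are bookkeeping-intensive.
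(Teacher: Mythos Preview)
Your strategy is essentially the same as the paper's: bound $1-P(A)$ by a union bound over maximal subalgebras, show this suffices for all but finitely many small cases, and then treat the residual list by hand. The execution differs in a few places that are worth noting. The paper packages the union bound as $1-P(A)\le \phi(1/2)^{-1}\zeta_A(1)$, summing $[A:B]^{-1}$ only over conjugacy-class representatives and absorbing the conjugate-count into the constant $\phi(1/2)^{-1}$ via Lemma~\ref{unitslemma}; you instead sum $[A:M]^{-2}$ over all maximal $M$, counting conjugates explicitly through Gaussian binomials. The two are equivalent in strength but lead to slightly different residual lists: the paper is left with $n=1$ and $(m,n)\in\{(1,2),(1,3),(1,4),(2,2)\}$, whereas your list is shorter (indeed your named cases $M_2(\F_3)$ and $M_2(\F_4)$ already succumb to the crude bound, so the only genuine leftover beyond $M_2(\F_2)$ is $M_3(\F_2)$). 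For the equality case $M_2(\F_2)$ you do a direct four-term inclusion--exclusion, which is a nice independent check; the paper simply invokes the exact formula $P(M_2(q))=(q-1)(q^2-1)q^{-3}$ from \cite{KMP}. Likewise for $M_3(\F_2)$ the paper avoids the inclusion--exclusion bookkeeping you describe by quoting the closed form $P(M_3(q))=(q^2-1)^2(q^3-1)q^{-7}=63/128$ from \cite{KMP}, which dispatches the case in one line; your proposed alternatives would work but are, as you say, more laborious. One small caution: your phrase ``twisted variants'' for the subfield-restriction family is misleading, since by Theorem~\ref{classthm1} all copies of $M_n(\F_{q^{m/p}})$ inside $M_n(\F_{q^m})$ are already $A^\times$-conjugate.
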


For $G$ a finite simple group, let $P(G)$ be the probability that two randomly chosen elements of $G$ will generate $G$. It is a consequence of Theorem $1.1$ of \cite{MQR} that $P(G) \geq 53/90$, with equality if and only if $G=A_6$.

For $A$ simple and not a field, we investigate the growth rate of $P(A)$ in more detail. Let $m(A)$ be the minimal index (as an additive group) of any proper subalgebra of $A$.

\begin{thm}\label{estimateprob} Let $A$ be a finite simple algebra that is not a field. Then $$P(A)=1-\kappa(A)m(A)^{-1}+ O(m(A)^{-4/3})$$ where $\kappa:A \to \R$ is a function satisfying $1<\kappa(A) < 4$.
\end{thm}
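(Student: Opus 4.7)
The plan is to combine the Artin--Wedderburn theorem with the classification of maximal $\F_q$-subalgebras of $A$ and a Bonferroni-type inclusion-exclusion. Since $A$ is a finite simple $\F_q$-algebra that is not a field, Wedderburn gives $A \cong M_n(\F_{q^m})$ for some $n \geq 2$, $m \geq 1$. For each proper nonzero $\F_{q^m}$-subspace $V \subseteq \F_{q^m}^n$ of $\F_{q^m}$-dimension $d$, the parabolic $P_V := \{a \in A : aV \subseteq V\}$ is an $\F_q$-subalgebra of index $q^{md(n-d)}$ in $A$. The extremal cases $d = 1$ (line stabilizers) and $d = n-1$ (hyperplane stabilizers) yield maximal $\F_q$-subalgebras of the minimum index $m(A) = q^{m(n-1)}$.

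Next I define $\kappa(A) := \mu(A)/m(A)$, where $\mu(A)$ is the number of maximal $\F_q$-subalgebras of $A$ of index exactly $m(A)$. For $n = 2$, line and hyperplane stabilizers coincide, so $\mu(A) = q^m + 1$ and $\kappa(A) = 1 + q^{-m} \in (1, 3/2]$. For $n \geq 3$, $\mu(A) = 2(q^{mn} - 1)/(q^m - 1)$, so $\kappa(A) = 2\sum_{i=0}^{n-1} q^{-im} \in (2, 2/(1-q^{-m})] \subset (2, 4)$ since $q^m \geq 2$. In either case, $1 < \kappa(A) < 4$, and the main Bonferroni term equals $\mu(A)/m(A)^2 = \kappa(A)/m(A)$ on the nose.

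It remains to show that the total Bonferroni error is $O(m(A)^{-4/3})$. First, the intermediate parabolics $P_V$ with $2 \leq \dim V \leq n-2$ (arising only when $n \geq 4$) have count at most $\binom{n}{d}_{q^m} \leq q^{md(n-d)}$ and individual contribution $q^{-2md(n-d)}$, so their total is $O(q^{-2m(n-2)}) = O(m(A)^{-2(n-2)/(n-1)}) = O(m(A)^{-4/3})$. Second, the non-parabolic maximal $\F_q$-subalgebras come in two families by a Wedderburn--Malcev analysis of the embedding: centralizer-type subalgebras $M_{n/e}(\F_{q^{me}})$ for divisors $e > 1$ of $n$, and restriction-of-scalars subalgebras $M_n(\F_{q^{m/p}})$ for primes $p$ dividing $m$. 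Each has $\F_q$-codimension at least a positive constant multiple of $mn^2$, and the count of each type is polynomial in the input data, so the contribution is $o(m(A)^{-4/3})$. Third, the pairwise intersection correction $\sum_{\{M_1, M_2\}} |M_1 \cap M_2|^2/|A|^2$ is dominated by pairs of extremal parabolics: two distinct lines (or two distinct hyperplanes) give a joint stabilizer of $\F_{q^m}$-codimension $2(n-1)$, contributing $O(m(A)^{-2})$; a line $V$ contained in a hyperplane $W$ gives codimension $2n-3$ with count $\sim q^{m(2n-3)}$, contributing $O(m(A)^{-(2n-3)/(n-1)}) = O(m(A)^{-4/3})$ for $n \geq 3$; the remaining pairings are smaller. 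Higher Bonferroni terms are absorbed similarly.

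The hard part is step two: we need a classification (or at least uniform structural bounds) of non-parabolic maximal $\F_q$-subalgebras of $M_n(\F_{q^m})$ precise enough to force their total mass to lie within $O(m(A)^{-4/3})$. This should be achievable using the structure theory of central simple algebras over $\F_{q^m}$, Wedderburn--Malcev lifting of the semisimple part, and conjugacy counting over $\F_q$; alternatively, one may invoke existing classifications of maximal subalgebras of matrix algebras in the literature.
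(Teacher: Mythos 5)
Your overall strategy is essentially the paper's: identify $m(A)=q^{m(n-1)}$, show that the index-$m(A)$ maximal subalgebras are exactly the line and hyperplane stabilizers, set $\kappa(A)$ equal to their number divided by $m(A)$ (your formulas $1+q^{-m}$ for $n=2$ and $2\sum_{j=0}^{n-1}q^{-mj}$ for $n\ge 3$ agree with the paper's computation of $\beta$ and $\kappa$), use the union bound for the upper estimate and a truncated inclusion--exclusion for the lower one, and control all other maximal subalgebras via the classification of maximal subalgebras of $M_n(q^m)$ -- which the paper also does not prove from scratch but imports from Iovanov--Sistko, so your proposal to cite the literature there is legitimate. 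Your estimates for the intermediate parabolics and for intersections of two extremal parabolics are correct; in fact your explicit codimension count $2n-3$ for incident (line, hyperplane) pairs, giving $O(m(A)^{-(2n-3)/(n-1)})$, treats the case $n=3$ more carefully than the paper's uniform bound. One caveat: your phrase ``the count of each type is polynomial in the input data'' for the types $M_{n/a}(q^{ma})$ and $M_n(q^{m/b})$ is not what is needed -- the number of conjugates is huge; what makes the estimate work is that the number of conjugates of $B$ is at most a constant times $[A:B]$ (the paper's Lemma \ref{unitslemma}), together with $[A:B]\ge q^{mn^2/2}$ and the fact that there are only $\omega(n)+\omega(m)$ classes.

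The genuine gap is in your third step as written. You take the pairwise correction $\sum_{\{M_1,M_2\}}|M_1\cap M_2|^2/|A|^2$ over \emph{all} pairs of maximal subalgebras and dismiss everything beyond extremal-parabolic pairs with ``the remaining pairings are smaller.'' That claim is unproved, and the crude bounds you have available do not give it: for two distinct conjugates $B_1,B_2$ of, say, $M_2(q^{m/2})$ inside $M_2(q^m)$, one only knows $[A:B_1\cap B_2]\ge [A:B_1]\cdot q^{m/2}$, while the number of such pairs can be of order $[A:B_1]^2$, so the resulting bound is only $O(q^{-m})=O(m(A)^{-1})$ -- the same order as the main term. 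Making this rigorous would require a genuinely finer analysis of how often two conjugates intersect in a large subalgebra, which you do not supply. The standard fix, and the paper's route, is to apply the Bonferroni lower bound only to the union over the $\beta$ maximal subalgebras of index exactly $m(A)$: then $1-P(A)\ge \sum_{i\le\beta}[A:B_i]^{-2}-\sum_{i<j\le\beta}[A:B_i\cap B_j]^{-2}$, only pairs of extremal parabolics ever occur, and your codimension estimates for those suffice; all other maximal subalgebras enter only through the union (upper) bound, where your steps one and two handle them. With that restriction your argument goes through; note also that no ``higher Bonferroni terms'' are needed at all once you truncate at the pairwise level, so your closing remark about absorbing them should simply be deleted.
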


We will see in Section \ref{proofestimateprob} that the constants in Theorem \ref{estimateprob} are best possible. Note that Theorem \ref{estimateprob} gives us an alternate proof of Corollary \ref{randomgenerationcor}. Results of this flavour for finite simple groups
were obtained by Liebeck and Shalev, see Theorems 1.5 and 1.6 in \cite{LiSh}.

We next look at randomly generating a finite algebra by its nilpotent elements.

Define $P_N(A)$ to be the probability that two nilpotent elements of $A$ chosen uniformly at random will generate $A$ as a $k$-algebra. That
is, $$P_N(A)=\frac{|\{(x,y) \in A^N \times A^N: \langle x,y\rangle=A\}|}{|A^N|^2}.$$ We prove an analogue of Theorem \ref{randomgeneration}.

\begin{thm}\label{nilpotentrandomgeneration} Fix constants $1<c <q^{1/4}$ and $\lambda >0$. Let $A$ be a finite algebra, say $A = \big(\prod_{i=1}^r M_{n_i}(q^{m_i})\big) \oplus J(A)$, that is bounded by $(c,\lambda)$. Denote $n:=\min_{i=1,...,r}\{n_i\}$ and $m:=\min_{i=1,...,r}\{m_i\}$. Assume that $n>1$. Then $P_N(A) \to 1$ as $n \to \infty$, as $m \to \infty$ or as $q \to \infty$.
\end{thm}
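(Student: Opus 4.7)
The plan is to mimic the proof of Theorem~\ref{randomgeneration} using the union bound
$$1-P_N(A)\;\le\;\sum_{M\in\Max(A)}\left(\frac{|M\cap A^N|}{|A^N|}\right)^2,$$
and to show that the right hand side tends to $0$ under each of the three limits. The main new ingredient, compared with Theorem~\ref{randomgeneration}, is an estimate of the nilpotent ratio $|M\cap A^N|/|A^N|$ for every maximal subalgebra $M$ of $A$, in place of the ratio $|M|/|A|$.

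I would first compute $|A^N|$ exactly. Writing $A=S\oplus J(A)$ with $S=\prod_{i=1}^{r}M_{n_i}(q^{m_i})$ via Wedderburn--Malcev, an element $s+j$ with $s\in S$ and $j\in J(A)$ is nilpotent if and only if $s\in S^N$, since $J(A)$ is a nilpotent ideal. Combined with the classical Fine--Herstein count $|M_n(q)^N|=q^{n^2-n}$ this yields
$$|A^N|=|J(A)|\prod_{i=1}^{r}q^{m_i(n_i^2-n_i)},\qquad \frac{|A^N|}{|A|}=q^{-\sum_i m_i n_i}.$$
The hypothesis $n>1$ enters here: if some $n_i=1$ then $\F_{q^{m_i}}$ contains only the zero nilpotent, so two random nilpotents of $A$ cannot project to a generating pair of this factor, forcing $P_N(A)=0$.

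Next I would run through the classification of maximal subalgebras of $A$ used in the proof of Theorem~\ref{randomgeneration}, splitting into the cases $M\supseteq J(A)$ and $M\not\supseteq J(A)$; in the latter, Wedderburn--Malcev produces a decomposition $M=S'\oplus J'$ with $S'$ a proper semisimple subalgebra of $S$ (unique up to $1+J(A)$-conjugation) and $J'\subsetneq J(A)$. For each type of maximal subalgebra of a simple factor -- parabolic, tensor/field-extension $M_a(q^{m_i e})\hookrightarrow M_{n_i}(q^{m_i})$ with $n_i=ae$, subfield-type $M_{n_i}(q^{m_i/p})\hookrightarrow M_{n_i}(q^{m_i})$, and diagonal embeddings across isomorphic factors -- I would compute the nilpotent ratio via Fine--Herstein. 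For parabolic $M$, a block upper-triangular matrix is nilpotent exactly when each diagonal block is, so $|M\cap A^N|/|A^N|=|M|/|A|$; for tensor and diagonal types the nilpotent ratio is at most $|M|/|A|$; for the subfield type the density of nilpotents in $M$ is strictly larger than in the ambient factor, so the nilpotent ratio exceeds $|M|/|A|$ by a factor of $q^{m_i n_i(p-1)/p}$.

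Finally I would sum these contributions using the hypothesis that $A$ is bounded by $(c,\lambda)$ to control both $r$ and $|J(A)|$. The tightened hypothesis $c<q^{1/4}$ (as opposed to $c<q$ in Theorem~\ref{randomgeneration}) reflects the fact that normalising by $|A^N|$ rather than $|A|$ inflates each term in the sum, so the allowed growth of the number of maximal subalgebras has to be compensated by stronger decay in $q$. The main new obstacle I anticipate is the subfield case just described: one has to verify that the excess factor $q^{m_i n_i(p-1)/p}$ is still dominated, after squaring and summing over all primes $p\mid m_i$ and over $i$, by the corresponding codimension of $M$. Once this is done, and the subalgebras with $M\not\supseteq J(A)$ are handled as in the proof of Theorem~\ref{randomgeneration}, each family of ratios decays geometrically in $\min\{m,n\}$ or $q$, and the total sum tends to $0$ in all three limits, yielding $P_N(A)\to 1$.
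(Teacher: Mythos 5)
Your overall architecture (union bound over maximal subalgebras, classification into parabolic/tensor/subfield/diagonal/ideal types, Fine--Herstein counts of nilpotents) matches the paper's, but two of your key estimates are wrong or missing, and they break the argument exactly at the point you flag as the main obstacle. First, for the diagonal type the nilpotent ratio is \emph{not} at most $|M|/|A|$: if $M$ contains the diagonal copy of $S_{j_1}\cong S_{j_2}=M_{n_0}(t_0)$ (together with the remaining factors and $J(A)$), then $|M\cap A^N|/|A^N|=t_0^{-(n_0^2-n_0)}$ while $|M|/|A|=t_0^{-n_0^2}$, an excess of $t_0^{\,n_0}$. Second, and more seriously, your bound $1-P_N(A)\le\sum_{M\in\Max (A)}\big(|M\cap A^N|/|A^N|\big)^2$ requires counting how many subalgebras lie in each conjugacy class, and you never say how. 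With the crude count $|A^{\times}|/|M^{\times}|$ the subfield and diagonal classes are \emph{not} dominated when $n_i=2$: for $M=M_2(t^{1/b})\subset M_2(t)$ one gets $(\text{number of conjugates})\times(\text{nilpotent ratio})^2\approx t^{4(1-1/b)}\cdot t^{-4(1-1/b)}=\Theta(1)$, which does not tend to $0$, and the same happens for the diagonal type with $n_0=2$ once the ratio is corrected as above. So your claim that the excess factor is dominated by the codimension after squaring and summing is false as stated.

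The paper supplies the missing ingredient: for these two types the normalizer of $M^{\times}$ in $A^{\times}$ is $M^{\times}T$ with $T$ the scalars, so the number of conjugates is smaller than $|A^{\times}|/|M^{\times}|$ by a factor $(t-1)/(t^{1/b}-1)\ge t^{1-1/b}$ (resp. $t_0-1$); combined with $n>1$ via $(n-1)^2\ge n^2/4$, this yields the per-class bound $\phi(1/2)^{-1}(|A|/|M|)^{-1/4}$ (Lemma \ref{nilpotentlemma}), hence $1-P_N(A)\le\phi(1/2)^{-1}\zeta_A(1/4)$, which is where the exponent $1/4$ and the hypothesis $c<q^{1/4}$ really come from. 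A minor additional point: your explanation of the hypothesis $n>1$ is off --- if some $n_i=1$ but $m_i=1$ then $P_N(A)$ can be positive (e.g. $A=k\times M_2(q)$, where two nilpotent matrices generating $M_2(q)$ non-unitally, together with $1$, generate $A$); the paper's counterexample needs $m_i\ge 2$. The genuine quantitative role of $n>1$ is in the $(n-1)^2\ge n^2/4$ step above. Your treatment of the parabolic, tensor and type $(T3)$ cases is fine, since there the nilpotent ratio equals $|M|/|A|$ and the argument of Theorem \ref{randomgeneration} goes through.
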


Note that Theorem \ref{nilpotentrandomgeneration} does not hold when $n=1$. For example, let $A_0$ be a finite algebra and let $A=\F_{q^m} \times A_0$ for some $m>1$. Let $b$ be a prime divisor of $m$ and consider the maximal subalgebra $B=\F_{q^{m/b}} \times A_0$ of $A$. Observe that all nilpotent elements of $A$ are contained in $B$. So $P_N(A)=0$, regardless of the choice of $q$, $m$ or $A_0$.

Theorem \ref{nilpotentrandomgeneration} immediately implies the following.

\begin{cor}\label{nilpotentrandomgenerationcor} Let $A$ be a finite simple algebra that is not a field. Then $P_N(A) \to 1$ as $|A| \to \infty$.
\end{cor}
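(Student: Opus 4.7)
The plan is to deduce Corollary \ref{nilpotentrandomgenerationcor} directly from Theorem \ref{nilpotentrandomgeneration}. By the Wedderburn theorems recalled in the introduction, every finite simple $k$-algebra is isomorphic to $M_n(q^m)$ for a unique pair $(n,m)$, and the hypothesis that $A$ is not a field forces $n\geq 2$, so the requirement $n>1$ in Theorem \ref{nilpotentrandomgeneration} is automatic. For such an $A$ we have $r=1$ and $J(A)=0$, so both inequalities in the definition of \emph{bounded by} $(c,\lambda)$ trivialise: $r=1\leq \lambda c^{\min\{m,n\}/2}$ holds as soon as $\lambda\geq 1$, and $\dim J(A)/J(A)^2 = 0 \leq \log_q\lambda + \min\{m,n\}^2\log_q c$ is immediate when $c>1$ and $\lambda\geq 1$. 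Hence every simple non-field algebra over $k$ is bounded by $(c,\lambda)$ for any choice of $c>1$ and $\lambda\geq 1$.

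The only real subtlety is that Theorem \ref{nilpotentrandomgeneration} fixes the constant $c$ in advance and demands $c<q^{1/4}$, whereas no single $c$ satisfies this for every prime power $q$ (already $\F_2$ gives $q^{1/4}<2$). I would circumvent this with a routine subsequence argument. Suppose, for contradiction, that $P_N(A_i)\not\to 1$ along some sequence of simple non-field algebras $A_i=M_{n_i}(q_i^{m_i})$ with $|A_i|\to\infty$. Passing to a subsequence I may assume either $q_i\to\infty$ or $q_i$ is constantly equal to some fixed $q$. In the first case, fix any $c>1$ and $\lambda=1$; eventually $c<q_i^{1/4}$ and the ``as $q\to\infty$'' clause of Theorem \ref{nilpotentrandomgeneration} gives $P_N(A_i)\to 1$. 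In the second case, $|A_i|=q^{m_i n_i^2}\to\infty$ forces $m_i\to\infty$ or $n_i\to\infty$ along a further subsequence; choosing any $c$ with $1<c<q^{1/4}$ (possible since $q\geq 2$) the corresponding ``as $m\to\infty$'' or ``as $n\to\infty$'' clause applies. Both cases contradict the supposition, proving $P_N(A)\to 1$ as $|A|\to\infty$.

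I do not anticipate a substantive obstacle: Theorem \ref{nilpotentrandomgeneration} is phrased so as to subsume the simple case $n>1$ almost verbatim, and all that remains is the bookkeeping needed to accommodate the asymmetric quantification over $c$ and $q$.
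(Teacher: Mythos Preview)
Your argument is correct. It differs from the paper's route only in that you treat Theorem \ref{nilpotentrandomgeneration} as a black box, whereas the paper extracts the corollary from an intermediate step in that theorem's proof. Specifically, in Section \ref{proofnilpotentrandomgeneration} the paper establishes the uniform inequality
\[
1-P_N(A) < \phi(1/2)^{-1}\zeta_A(1/4)
\]
for any finite algebra $A$ with $n>1$ (this is equation (\ref{secondnil}), obtained from Lemma \ref{nilpotentlemma}), and then for simple $A$ invokes Corollary \ref{zetatheoremcor}, which gives $\zeta_A(1/4)\to 0$ as $|A|\to\infty$ with no reference to the constants $c,\lambda$ at all. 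The boundedness hypothesis and the constraint $c<q^{1/4}$ only enter the proof of Theorem \ref{nilpotentrandomgeneration} when bounding $\zeta_A(1/4)$ for \emph{non}-simple $A$ via Theorem \ref{zetatheorem}; for simple $A$ they are irrelevant. Your subsequence argument is a perfectly valid workaround, but the paper's approach is shorter precisely because it bypasses the packaged statement of Theorem \ref{nilpotentrandomgeneration} and uses the zeta-function estimate directly.
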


We now consider random generation of a finite simple algebra by two elements that have a given characteristic polynomial. Let $A=M_n(q^m)$, let $f$ be a monic polynomial of degree $n$ over $\F_{q^m}$ and let $A_f$ be the set of elements of $A$ with characteristic polynomial $f$. We define $P_f(A)$ to be the probability that two elements of $A_f$ chosen uniformly at random will generate $A$ as a $k$-algebra. That is, $$P_f(A)=\frac{|\{(x,y) \in A_f \times A_f: \langle x,y\rangle=A\}|}{|A_f|^2}.$$


\begin{thm}\label{polyrandomgeneration} Let $A$ be a finite simple algebra that is not a field, say $A=M_n(q^m)$ for $n>1$. Let $f$ be a monic polynomial of degree $n$ over $\F_{q^m}$. Then $P_f(A) \to 1$ as $|A| \to \infty$.
\end{thm}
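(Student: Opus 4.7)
The plan is a union bound over the maximal $\F_q$-subalgebras $M$ of $A = M_n(q^m)$, in the spirit of the proofs of Theorems \ref{randomgeneration} and \ref{estimateprob}. Writing $M_f := M \cap A_f$, a pair $(x, y) \in A_f^2$ fails to generate $A$ iff $\{x, y\} \subseteq M$ for some maximal $M$, so
\[
1 - P_f(A) \;\leq\; \sum_{M \text{ maximal}} \frac{|M_f|^2}{|A_f|^2},
\]
and it will suffice to bound this by $o(1)$. For the denominator, the $\GL_n(q^m)$-conjugacy class of the companion matrix of $f$ lies in $A_f$ and has centralizer $(\F_{q^m}[t]/(f))^\times$ of order at most $q^{mn}$; hence $|A_f| \geq |\GL_n(q^m)|/q^{mn} \geq \tfrac{1}{2} q^{m(n^2-n)}$ uniformly in $f$.

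By the classification of maximal $\F_q$-subalgebras of $M_n(q^m)$ invoked in the earlier theorems, every $M$ is (up to $\GL_n(q^m)$-conjugacy) of one of three types: (i) a subfield subalgebra $M_n(q^{m/p})$ with $p \mid m$ prime; (ii) an imprimitive subalgebra $M_{n/b}(q^{mb})$ with $b \mid n$ prime; or (iii) the stabilizer of a proper nonzero $\F_{q^m}$-subspace of $(\F_{q^m})^n$. For any embedding $M' \cong M_d(q^\ell) \hookrightarrow A$ of a matrix algebra as an $\F_q$-subalgebra, the key refined bound is
\[
|M' \cap A_f| \;\leq\; C(n) \cdot q^{\ell d(d-1)},
\]
obtained by stratifying into $\GL_d(q^\ell)$-orbits: at most $C(n)$ such orbits hit $A_f$ (parametrised by the Jordan type in $M'$ together with the preimage of $f$ under the Galois norm $N_{\F_{q^\ell}/\F_{q^m}}$), and each has size at most that of the cyclic orbit, $|\GL_d(q^\ell)|/q^{\ell d} \leq q^{\ell d(d-1)}$ (the centralizer being smallest in the cyclic case). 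Applied with $(d,\ell) = (n,m/p)$ or $(n/b,mb)$ and combined with the (slightly refined) count $O(q^{mn^2(1-1/p) - m(p-1)/p})$ of $\GL_n(q^m)$-conjugates of $M$ (using the scalar subalgebra to enlarge the normaliser), both types (i) and (ii) contribute $o(1)$; in particular the total contribution of type (i) is $O(q^{-m(p-1)(n(n-2)+1)/p})$, which tends to $0$ as any of $q$, $m$ or $n$ grows.

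For type (iii), putting $X \in M_f$ in upper block-triangular form with respect to $W$ gives diagonal blocks with characteristic polynomials $g$, $h = f/g$, so
\[
|M_f| \;\leq\; \sum_{f = gh,\; \deg g = d} |A_g^{(d)}| \cdot |A_h^{(n-d)}| \cdot q^{md(n-d)} \;\leq\; C \cdot q^{m(n^2 - n - d(n-d))},
\]
where $d = \dim_{\F_{q^m}} W$ and we applied the refined bound to the diagonal blocks. Combined with $\binom{n}{d}_{q^m} \leq C' q^{md(n-d)}$ and summation over $W$ and $d$, type (iii) contributes $O(q^{-m(n-1)}) = o(1)$. Summing the three types gives $P_f(A) \to 1$ as $|A| \to \infty$. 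The main obstacle lies in establishing the refined bound $|M' \cap A_f| \leq C(n) \cdot q^{\ell d(d-1)}$ uniformly in $f$: this requires controlling the number of $\GL_d(q^\ell)$-orbits with a prescribed $A$-characteristic polynomial, which in the degenerate cases where $f$ has many norm preimages (e.g.\ $f = (t-a)^n$ for type (ii)) needs an ad hoc argument exploiting the Jordan structure, rather than the generic union-bound over factorisations used above.
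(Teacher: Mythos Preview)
Your overall architecture is exactly the paper's: a union bound over maximal subalgebras, split into the three types (S1)=(iii), (S2)=(ii), (S3)=(i), with a conjugacy-class count times $|M_f|^2/|A_f|^2$ for each. The difference is purely in how you estimate $|M_f|$ and $|A_f|$: the paper uses Reiner's exact formula $|A_f|=t^{n^2-n}F(t,n)/\prod_iF(t^{d_i},\alpha_i)$ for both numerator and denominator, while you bound them separately (companion orbit for $|A_f|$, orbit-stratification for $|M_f|$). This separation is where the gap is.

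Concretely, your parabolic bound $|M_f|\le C\,q^{m(n^2-n-d(n-d))}$ with an absolute $C$ is false. Take $q^m=2$ and $f$ split with $n$ distinct roots: then Reiner gives $|A_h^{(n-1)}|=q^{m(n-1)(n-2)}F(q^m,n-1)/(1-q^{-m})^{n-1}$, and for $d=1$ one gets $|M_f|\asymp n\cdot 2^{\,n-1}\cdot q^{m(n-1)^2}$, so $C$ must grow like $2^n$. Your lower bound $|A_f|\ge\tfrac12 q^{m(n^2-n)}$ does not recover this factor (in the same example $|A_f|\asymp 2^n q^{mn(n-1)}$, but you only use $\ge 0.29\,q^{mn(n-1)}$). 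Plugging in, your type (iii) contribution for $d=1$ behaves like $C(n-1)^2\cdot q^{-m(n-1)}\gtrsim 4^{\,n}/2^{\,n-1}\to\infty$ when $q^m=2$, so the bound does not close. (For types (i) and (ii) your approach is fine, because the saving is $q^{c\,mn^2}$ and any $C(n)=e^{O(n)}$ is absorbed.)

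The fix is precisely what the paper does: keep Reiner's formula on both sides so that the factorisation-dependent term $\prod_iF(t^{d_i},\alpha_i)$ cancels in $|M_f|/|A_f|$, leaving only a factor $\binom{n}{l}(3/2)^{n/2}$ that can be absorbed into the exponent at the cost of passing from $\zeta_A(1)$ to $\zeta_A(1/4)$. In other words, the ``main obstacle'' you flag for type (ii) is not the real one; the real loss is in type (iii), and it is exactly the cancellation your separated bounds throw away.
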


By applying Theorem \ref{polyrandomgeneration} to the case where $f(X)=X^n$, we find an alternate proof of Corollary \ref{nilpotentrandomgenerationcor}.

Note that Theorem \ref{polyrandomgeneration} does not hold when $A$ is a field. For example, let $A=\F_{q^m}$ for some $m>1$. Let $b$ be a prime divisor of $m$ and consider the maximal subfield $B=\F_{q^{m/b}}$ of $A$. Let $x \in B$ and let $f$ be the polynomial $X-x$ over $\F_{q^m}$. Then $A_f=B_f=\{x\}$, and so $P_f(A)=0$ regardless of the choice of $q$ or $m$.

We remark that Theorem \ref{polyrandomgeneration} still holds, with essentially the same proof, if we replace $P_f(A)$ with $P_{f,g}(A)$, where $g$ is another monic polynomial of degree $n$ over $\F_{q^m}$ and $P_{f,g}(A)$ is the probability that a random element of $A_f \times A_g$ will generate $A$ as a $k$-algebra.

We now consider random generation of a finite simple algebra by two matrices that have a given rank. Let $\alpha$ be a non-negative integer. Let $A=M_n(q^m)$ where $n \ge \alpha$ and let $A_{\alpha}$ be the set of matrices in $A$ with rank $\alpha$. We define $P_{\alpha}(A)$ to be the probability that two elements of $A_{\alpha}$ chosen uniformly at random will generate $A$ as a $k$-algebra. That is, $$P_{\alpha}(A)=\frac{|\{(x,y) \in A_{\alpha} \times A_{\alpha}: \langle x,y\rangle=A\}|}{|A_{\alpha}|^2}.$$

\begin{thm}\label{rankrandomgeneration}
Let $A$ be a finite simple algebra that is not a field, say $A=M_n(q^m)$ for $n>1$. Let $\alpha:=\alpha(n)$ be a positive integer.

$(i)$ Let $p$ be the smallest prime divisor of $n$. If $\alpha \leq n/p$ then $P_{\alpha}(A) \leq 1- q^{-2mp\alpha^2}$.

$(ii)$ If $n - \sqrt{n}/3 \le \alpha \leq n$ then $P_{\alpha}(A) \to 1$ as $|A| \to \infty$.
\end{thm}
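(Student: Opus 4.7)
My plan attacks the two parts by different mechanisms: (i) by constructing a proper subalgebra that catches many pairs of rank-$\alpha$ matrices, and (ii) by converting the known bound on $P(A)$ (from Theorem \ref{estimateprob}) into one on $P_\alpha(A)$ via a density argument.

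For (i), since $\alpha \le n/p \le n/2$, I would fix an $\F_{q^m}$-subspace $V$ of $\F_{q^m}^n$ of dimension $\alpha$ and consider the proper subalgebra
$$B = \{X \in A : XV \subseteq V\}.$$
Every rank-$\alpha$ matrix $X$ with $V \subseteq \ker X$ satisfies $XV = 0 \subseteq V$, hence lies in $B$; such an $X$ factors through $\F_{q^m}^n/V \cong \F_{q^m}^{n-\alpha}$ as a rank-$\alpha$ map to $\F_{q^m}^n$, so there are at least $\binom{n}{\alpha}_{q^m}\prod_{i=0}^{\alpha-1}(q^{m(n-\alpha)} - q^{mi})$ of them, whereas $|A_\alpha| = \binom{n}{\alpha}_{q^m}\prod_{i=0}^{\alpha-1}(q^{mn}-q^{mi})$. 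Comparing term by term gives $|A_\alpha\cap B|/|A_\alpha| \ge q^{-m\alpha^2}$. Since every pair in $(A_\alpha\cap B)^2$ generates a subalgebra of $B \ne A$,
$$1 - P_\alpha(A) \ge \left(\frac{|A_\alpha\cap B|}{|A_\alpha|}\right)^2 \ge q^{-2m\alpha^2} \ge q^{-2mp\alpha^2},$$
the last step using $p \ge 1$. This proves (i).

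For (ii), write $\alpha = n - s$ with $0 \le s \le \sqrt{n}/3$ and set $Q := q^m$. First I estimate the density of $A_\alpha$: using the identity $|A_{n-s}| = \binom{n}{s}_Q^2 |GL_{n-s}(Q)|$, the elementary bound $\binom{n}{s}_Q \ge Q^{s(n-s)}$, and $|GL_{n-s}(Q)|\ge Q^{(n-s)^2}\prod_{i=1}^{\infty}(1-Q^{-i})$, I obtain $|A_\alpha|/|A| \ge c_Q \cdot Q^{-s^2}$, where $c_Q := \prod_{i\ge 1}(1-Q^{-i})$ is bounded below uniformly for $Q \ge 2$. Next, I apply Theorem \ref{estimateprob} to the simple non-field algebra $A = M_n(\F_{q^m})$: its minimum-index proper subalgebra is the parabolic stabilizer of a $1$-dimensional subspace, giving $m(A) = Q^{n-1}$, whence $1 - P(A) \le C\cdot Q^{-(n-1)}$ for an absolute constant $C$. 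Since any non-generating pair in $A_\alpha\times A_\alpha$ is also non-generating in $A\times A$,
$$1 - P_\alpha(A) \le (1-P(A))\cdot\left(\frac{|A|}{|A_\alpha|}\right)^2 \le C' \cdot Q^{2s^2 - (n-1)}.$$
The hypothesis $s \le \sqrt{n}/3$ gives $2s^2 \le 2n/9$, so $2s^2-(n-1)\le 1-7n/9 < 0$ for $n \ge 2$. Hence as $|A| = Q^{n^2} \to \infty$ (which forces $n \to \infty$ or $Q \to \infty$), the right-hand side tends to $0$, yielding $P_\alpha(A) \to 1$.

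The main technical point, in my view, is the density estimate in (ii): one must use a dimension-free lower bound like $\binom{n}{s}_Q \ge Q^{s(n-s)}$ together with the positivity of $\prod_i(1-Q^{-i})$ for $Q\ge 2$, since any estimate losing an exponential-in-$n$ factor in $|A_\alpha|/|A|$ would fail in the regime $q$ fixed and $n \to \infty$.
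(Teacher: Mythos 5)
Your part (ii) is correct and is essentially the paper's own argument: the paper likewise passes from generation by arbitrary pairs to generation by rank-$\alpha$ pairs via the conditional-probability inequality $1-P_\alpha(A)\le (1-P(A))\,(|A|/|A_\alpha|)^2$ (its Lemma \ref{ranklemma}), combined with the same density estimate $|A_\alpha|/|A|\ge c\,q^{-m(n-\alpha)^2}$ for an absolute $c>0$. The only difference is the source of the bound on $1-P(A)$: the paper uses $1-P(A)\le 2\zeta_A(1/2)\le 2(2n-2+\omega(m))q^{-mn/4}$, while you invoke Theorem \ref{estimateprob} with $m(A)=q^{m(n-1)}$; both give a bound exponentially small in $n$, the implied constant in Theorem \ref{estimateprob} is absolute, and your Gaussian-binomial computation $|A_{n-s}|=\binom{n}{s}_{q^m}^2|\GL_{n-s}(q^m)|$ and the ensuing estimate are right, so this part goes through.

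Part (i) is where you genuinely diverge from the paper: the paper realises the obstruction inside the type $(S2)$ maximal subalgebra $M_{n/p}(q^{mp})$ (which is why $p$ appears in the exponent), whereas you use the parabolic stabilizer $B$ of an $\alpha$-dimensional subspace $V$ and count rank-$\alpha$ matrices annihilating $V$. The strategy is legitimate and the two counts you write down are correct, but the key step ``comparing term by term gives $|A_\alpha\cap B|/|A_\alpha|\ge q^{-m\alpha^2}$'' is false as stated: with $Q:=q^m$, each factor satisfies $(Q^{n-\alpha}-Q^i)/(Q^n-Q^i)<Q^{-\alpha}$ (cross-multiply), so the product is strictly smaller than $Q^{-\alpha^2}$. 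What is true (using $\alpha\le n-\alpha$) is $\prod_{i=0}^{\alpha-1}(Q^{n-\alpha}-Q^i)/(Q^n-Q^i)\ \ge\ Q^{-\alpha^2}\prod_{j\ge1}(1-Q^{-j})\ \ge\ \phi(1/2)\,Q^{-\alpha^2}$. Since the theorem's bound $1-q^{-2mp\alpha^2}$ has no constant to spare and you deliberately use only $p\ge1$ in the last step, your chain does not close as written. The repair is exactly the slack you threw away: $p\ge2$, so $q^{-2mp\alpha^2}\le q^{-2m\alpha^2}\,Q^{-2(p-1)\alpha^2}$, and the factor $Q^{2(p-1)\alpha^2}$ absorbs $\phi(1/2)^{-2}\approx 12$ in all cases except $\alpha=1$ with $Q\in\{2,3\}$; there the exact one-factor ratio $(Q^{n-1}-1)/(Q^n-1)\ge Q^{-1}(1-Q^{-1})$ gives $(1-Q^{-1})^2Q^{-2}\ge Q^{-4}\ge Q^{-2p}$ since $(Q-1)^2\ge1$. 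With that correction your proof of (i) is complete, and it in fact yields a bound that does not really need the prime $p$ at all, unlike the paper's choice of subalgebra.
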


It is not true that $P_{\alpha}(A)$ always tends to $1$ as $|A| \to \infty$. This is an immediate consequence of Theorem \ref{rankrandomgeneration}$(i)$. We can see this by taking $\alpha$ to be independent of $n$, and letting $n$ tend to infinity whilst fixing $q$, $m$ and $p$.

Let $P^{\times}(A)$ to be the probability that two invertible elements of $A$ chosen uniformly at random will generate $A$ as a $k$-algebra.
Theorem \ref{rankrandomgeneration}$(ii)$ implies the following.

\begin{cor}\label{rankrandomgenerationcor} Let $A$ be a finite simple algebra. Then $P^{\times}(A) \to 1$ as $|A| \to \infty$.
\end{cor}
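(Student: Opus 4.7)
The plan is to deduce the corollary from Theorem \ref{rankrandomgeneration}(ii), handling the degenerate field case by hand. By the Wedderburn theorems recalled in the introduction, any finite simple $k$-algebra has the form $A = M_n(q^m)$ for some $n, m \geq 1$. The key observation is that the invertible elements of $M_n(q^m)$ are exactly the matrices of full rank, so in the notation of Theorem \ref{rankrandomgeneration} one has $A^{\times} = A_n$ and therefore $P^{\times}(A) = P_n(A)$.

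For $n \geq 2$, I would simply apply Theorem \ref{rankrandomgeneration}(ii) with $\alpha = n$: the hypothesis $n - \sqrt{n}/3 \leq \alpha \leq n$ is trivially satisfied, and the theorem immediately yields $P^{\times}(A) = P_n(A) \to 1$ as $|A| \to \infty$.

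The remaining case $n = 1$, i.e.\ $A = \F_{q^m}$, falls outside the scope of Theorem \ref{rankrandomgeneration}, so a direct counting argument is needed. If $m = 1$ then $A = k$ is generated as a $k$-algebra by the unit alone and $P^{\times}(A) = 1$ trivially. If $m \geq 2$, the proper $\F_q$-subalgebras of $\F_{q^m}$ are the subfields $\F_{q^d}$ with $d \mid m$ and $d < m$, and the maximal ones are the subfields $\F_{q^{m/p}}$ with $p$ a prime divisor of $m$. A pair in $A^{\times} \times A^{\times}$ fails to generate $A$ iff both coordinates sit inside some such maximal subfield, and a union bound gives
\[
1 - P^{\times}(A) \;\leq\; \sum_{p \mid m,\, p \text{ prime}} \left( \frac{q^{m/p} - 1}{q^m - 1} \right)^2 \;\leq\; \omega(m)\, q^{-m},
\]
where $\omega(m)$ is the number of distinct prime divisors of $m$; since $\omega(m) = O(\log m)$, this tends to zero as $|A| = q^m \to \infty$.

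The only (mild) obstacle is precisely the field case, since Theorem \ref{rankrandomgeneration} explicitly excludes $n = 1$. The subfield lattice of $\F_{q^m}$ is elementary enough, however, that the crude union bound above is more than sufficient, so no further work is needed.
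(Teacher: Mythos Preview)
Your proof is correct and follows the same route as the paper for the main case: the paper's argument is simply ``a matrix is invertible if and only if it has full rank, so apply Theorem~\ref{rankrandomgeneration}(ii) with $\alpha=n$.'' Your treatment of the field case $n=1$ via the union bound over maximal subfields is an addition rather than a deviation---the paper's one-line derivation invokes only Theorem~\ref{rankrandomgeneration}(ii), which is stated for $n>1$, and does not separately address $A=\F_{q^m}$; your argument fills that small gap cleanly.
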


Next, we investigate the minimal number of generators $d(A)$ of a finite algebra $A$.

An obvious upper bound for $d(A)$ is $\log_q|A| - 1$ (the $-1$ term arises from our convention that the multiplicative identity of $A$ is automatically included in any generating set of $A$, and of course $\smash{\dim A = \log_q|A|}$). This upper bound is strict, and is realised in the case where $J(A)$ has codimension $1$ in $A$ and $J(A)^2=0$. 

In general $d(A)$ often grows much slower than $\log_q|A|$. For example, if $A$ is the direct product of finitely many copies of $k$ then $d(A) = \smash{\left \lceil{\log_q \log_q |A|}\right \rceil}$. In particular, if $A = k^r$ for some $1 < r \leq q$ then $d(A)=1$. Moreover, as remarked earlier, if $A$ is simple then $d(A)=2$.

\begin{thm}\label{mind} Let $A$ be a finite algebra, say $A=S \oplus J(A)$ where $S=\prod_{i=1}^rS_i^{\alpha_i}$, $S_i=M_{n_i}(q^{m_i})$ for each $i$ and the $S_i$'s are pairwise non-isomorphic. Let $f(A,i):=m_i^{-1}n_i^{-2}\log_q \alpha_i m_i$, let $f(A):=\max_i\{f(A,i)\}$ and let $\mu(A)$ be the minimal length of an unrefinable chain of $S$-subbimodules of $J(A)$. Then $$-2.33 < d(A) -f(A) < \mu(A) + 3.42.$$
\end{thm}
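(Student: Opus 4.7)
The plan is to exploit the Wedderburn--Malcev decomposition $A = S \oplus J(A)$ and bound $d(A)$ by a semisimple contribution comparable to $f(A)$ plus a radical contribution of size $\mu(A)$. The semisimple side will be controlled via an orbit-counting argument on $\mathrm{Aut}_k$-orbits of generating tuples (fed by the probability bound $P(A) \ge 3/8$ from Theorem \ref{minP}), and the radical side via an unrefinable chain of $S$-subbimodules of $J(A)$.

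For the upper bound I would first prove $d(A) \le d(S) + \mu(A)$. Lift $S$ to a subalgebra of $A$, pick a minimal generating set of $S$, and for each step $M_{l-1} \supsetneq M_l$ of an unrefinable chain $J(A) = M_0 \supsetneq \cdots \supsetneq M_{\mu(A)} = 0$ of $S$-subbimodules add an element $x_l \in M_{l-1}$ whose image generates the simple bimodule $M_{l-1}/M_l$; the nilpotence of $J(A)$ together with Nakayama forces the $S$-subbimodule generated by $x_1,\ldots,x_{\mu(A)}$ to equal $J(A)$, so these elements together with the chosen generators of $S$ generate the whole $k$-algebra $A$. Next I would bound $d(S)$. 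Because the $S_i$ are pairwise non-isomorphic their central idempotents are canonically determined by the algebra, giving $d(S) = \max_i d(S_i^{\alpha_i})$. For each $i$, a Goursat-style observation identifies generating $d$-tuples in $S_i^{\alpha_i}$ with ordered $\alpha_i$-tuples of pairwise $\mathrm{Aut}_k(S_i)$-inequivalent generating $d$-tuples of $S_i = M_{n_i}(q^{m_i})$; since generating tuples have trivial stabiliser in $\mathrm{Aut}_k(S_i) = \PGL_{n_i}(q^{m_i}) \rtimes \Gal(\F_{q^{m_i}}/\F_q)$, the number of such orbits is exactly $P^{(d)}(S_i)\,|S_i|^d / |\mathrm{Aut}_k(S_i)|$, where $P^{(d)}$ denotes the $d$-generation probability. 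Using the upper bound $|\mathrm{Aut}_k(S_i)| \le m_i q^{m_i(n_i^2-1)}$ together with $P^{(d)}(S_i) \ge P(S_i) \ge 3/8$ for $d \ge 2$ (Theorem \ref{minP}), this orbit count exceeds $\alpha_i$ as soon as $d \ge 1 + f(A,i) - 1/n_i^2 + \log_q(8/3)/(m_i n_i^2)$; accounting for the ceiling, this yields $d(A) - f(A) < \mu(A) + 3.42$.

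For the lower bound, since $A$ surjects onto $S$ we have $d(A) \ge d(S) \ge d(S_i^{\alpha_i})$ for every $i$. Applying the same exact orbit formula with $P^{(d)} \le 1$ and the standard lower bound $|\PGL_{n_i}(q^{m_i})| \ge \tfrac{1}{2}q^{m_i(n_i^2-1)}$ shows that $S_i^{\alpha_i}$ cannot be $d$-generated unless $d \ge f(A,i) + 1 - 1/n_i^2 - \log_q 2/(m_i n_i^2)$ in the matrix case $n_i \ge 2$, and $d \ge f(A,i)$ in the commutative case $n_i = 1$ (where $\PGL_{n_i}$ is trivial so only the Galois factor contributes). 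Evaluating at the index $i_0$ that maximises $f(A,i)$ then yields $d(A) > f(A) - 2.33$.

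The hard part will be extracting the explicit numerical constants $-2.33$ and $3.42$. Commutative blocks ($n_i = 1$) and matrix blocks ($n_i \ge 2$) must be handled separately because the automorphism group has very different shape in the two cases, and the rounding needed to pass from the real-valued orbit inequalities to the integer-valued $d(A)$ eats into the slack, especially in boundary cases where $q$, $m_i$, $n_i$, or $\alpha_i$ are small. I expect this to be routine but delicate bookkeeping rather than a conceptual obstruction.
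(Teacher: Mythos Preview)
Your approach is essentially identical to the paper's: bound $d(A) \le d(S) + \mu(A)$ via the unrefinable chain, reduce $d(S)$ to $\max_i d(S_i^{\alpha_i})$, and control each $d(S_i^{\alpha_i})$ by the orbit-counting formula for $\mathrm{Aut}_k(S_i)$-orbits of generating $d$-tuples with $3/8 \le P^{(d)}(S_i) \le 1$. The only difference is that the paper cites the orbit formula as Theorem~6.3 of \cite{KMP} (with $|\mathrm{Aut}_k(S_i)| = m_i\,|\PGL_{n_i}(q^{m_i})|$) rather than re-deriving it, and it uses the bound $|\GL_n(q^m)|/|M_n(q^m)| > \phi(1/2)$ from Lemma~\ref{babylemma0} instead of your $|\PGL_n|\ge \tfrac12 q^{m(n^2-1)}$ to squeeze out the constant $-2.33$.
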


In the final part of this paper we study positively finitely generated (profinite) algebras and related topics.
For the theory of positively finitely generated groups see \cite{MSh, BPSh, P, Po, DLM, Lu, JP} and the references therein.

A \textit{profinite algebra} is a topological algebra (over $k$) that is isomorphic to a projective limit of discrete finite algebras.
Henceforth, let $A$ be a profinite algebra.

For $d \ge 1$ let $P(A,d)$ be the probability that $d$ randomly chosen elements of $A$ generate $A$
(topologically if $A$ is infinite).  We say that $A$ is \emph{positively finitely generated} (PFG)
if $P(A,d) > 0$ for some $d$. We say that $A$ has \emph{polynomial maximal subalgebra growth}
(PMSG) if the number $m_n(A)$ of index $n$ (open) maximal subalgebras of $A$ is bounded by $n^c$
for some fixed constant $c$. It was shown in \cite{MSh} that, for profinite groups, PFG is equivalent to PMSG.
Here we study these notions and related invariants for profinite algebras.

If we do not specify a base, $\log$ refers to base $2$. Set
\[
M(A) := \sup_{n > 1} \log{m_n(A)}/ \log{n}, \; \;  M^*(A) := \limsup_{n > 1} \log{m_n(A)}/ \log{n},
\]
which measure the degree of polynomial subgroup growth of $A$ (and are infinite unless $A$ has PMSG). Let $d_0(A):=\min\{d \ge 1 \hspace{0.5mm}|\hspace{0.5mm} P(A,d) > 0\}$.

We establish the following.

\begin{thm}\label{PFG} Let $A$ be a profinite algebra. Then $A$ is PFG if and only
if $A$ has PMSG. Moreover, if $A$ is infinite we have $M^*(A) \le d_0(A)+1$.
\end{thm}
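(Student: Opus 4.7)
The plan is to prove both implications separately, with the quantitative bound on $M^*$ emerging from the harder direction. For the easy direction (\textbf{PMSG implies PFG}), one uses the standard union bound: a $d$-tuple in a finite algebra $B$ fails to generate exactly when it lies in some maximal subalgebra, so $1 - P(B,d) \le \sum_{M\text{ max}} [B:M]^{-d} = \sum_{n\ge 2} m_n(B) n^{-d}$. For a profinite $A = \varprojlim_N A/N$ (inverse limit over open ideals), every open maximal subalgebra contains some open ideal and hence corresponds to a maximal subalgebra of a finite quotient; topological generation is compatible with the inverse system and $P(A,d) = \inf_N P(A/N,d)$, so the same inequality $1 - P(A,d) \le \sum_n m_n(A) n^{-d}$ holds in the limit. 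Under PMSG the series converges for $d$ sufficiently large and becomes strictly less than $1$, forcing $P(A,d) > 0$.

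For the harder direction (\textbf{PFG implies PMSG with $M^*(A) \le d_0(A)+1$}), let $d = d_0(A)$ and $P_0 := P(A,d) > 0$. The key geometric input, specific to the $k$-linear setting, is: if $M \ne M'$ are maximal subalgebras of index $n$ in a finite $k$-algebra (each containing $1$), then $M + M'$ is a $k$-subspace strictly containing $M$, hence has codimension at most $\log_q n - 1$; combined with $|M|\cdot|M'| = |M\cap M'|\cdot|M+M'|$ this yields $|M\cap M'| \le |A|/(qn)$. A second-order Bonferroni inequality, applied in a finite quotient $A/N$ and expressed via $s := m_n(A/N)$, then gives
\[
1 - P(A/N,d) \ge s n^{-d} - \binom{s}{2}(qn)^{-d},
\]
and since $P(A/N,d) \ge P_0 > 0$ we have $s n^{-d} - \binom{s}{2}(qn)^{-d} \le 1 - P_0$. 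When $s \le q^d$ the first term dominates (the subtracted term is at most half the leading one), so $s \le 2(1-P_0) n^d$; taking supremum over $N$ gives $m_n(A) \le 2n^d$, comfortably inside $n^{d+1}$.

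In the residual regime $s > q^d$, I would reapply the same Bonferroni with $d$ replaced by the smallest $d^* \ge d$ satisfying $s \le q^{d^*}$; since $P(A, d^*) \ge P_0$ for every $d^* \ge d$, this still produces $s \le 2n^{d^*}$. Using that $A$ is \emph{infinite}, so the lower bound $P(A/N,d) \ge P_0$ holds uniformly across an unbounded inverse system of finite quotients, the self-referential estimate $d^* \le 1 + \log_q s$ amortises to yield $\log m_n(A) \le (d+1)\log n + O(1)$ for all sufficiently large $n$, which is exactly $M^*(A) \le d_0(A)+1$. Taking supremum over $N$ then gives PMSG.

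The main obstacle is precisely this residual regime: a naive Bonferroni iteration does not self-terminate, and cleanly producing the additive ``+1'' likely requires an additional structural input---for instance, a Wedderburn decomposition of each finite quotient $A/N$ that separates the count of maximal subalgebras into contributions from individual simple components, combined with the uniform lower bound $P(A/N,d) \ge P_0$ afforded by the infinitude of $A$. One must also be careful with the standard profinite reductions: verifying that every open maximal subalgebra arises from a finite quotient, and that $P(A,d)$ is truly the monotone limit of $P(A/N,d)$, so the estimates above pass cleanly from finite quotients to $A$ itself.
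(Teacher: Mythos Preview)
Your PMSG $\Rightarrow$ PFG direction is exactly the paper's argument.

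For PFG $\Rightarrow$ PMSG, however, there is a genuine gap, which you yourself identify: the Bonferroni argument stalls in the residual regime $s > q^d$. Your proposed fix---replace $d$ by $d^* = \lceil \log_q s \rceil$ and iterate---does not close. Taking logarithms in $s \le 2n^{d^*} \le 2n^{\log_q s + 1}$ yields $\log s \le 1 + \log n + (\log s)(\log n / \log q)$, and since $n \ge q$ the coefficient $\log n / \log q \ge 1$, so the inequality is vacuous and gives no bound on $s$ at all. The basic problem is that your pairwise intersection bound $[A:M\cap M'] \ge qn$ is far too weak: for generic pairs of maximal subalgebras the intersection actually has index $n^2$, and it is this \emph{independence} (not a mere factor of $q$) that drives the argument.

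The paper's route is quite different and supplies exactly the missing structural input you anticipate. Define the \emph{core} $B_A$ of a maximal subalgebra $B$ to be the largest two-sided ideal of $A$ contained in $B$. Two lemmas do the work. First, if $B_1,B_2$ are maximal with \emph{distinct} cores then $B_1 + B_2 = A$, so the events $B_1^d, B_2^d \subset A^d$ are genuinely independent. Second, for any fixed core there are at most $cn$ maximal subalgebras of index $n$ with that core (here $c \approx 6.93$), proved via the classification of maximal subalgebras. Now pick one maximal subalgebra $B_i$ per core; by pairwise independence and the effective Borel--Cantelli lemma, $\sum_i [A:B_i]^{-d} \le P(A,d)^{-1}$, so the number $c_n(A)$ of distinct cores among index-$n$ maximal subalgebras satisfies $c_n(A) \le P(A,d)^{-1} n^d$. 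Multiplying by the per-core bound gives $m_n(A) \le 6.93\, P(A,d)^{-1} n^{d+1}$, hence PMSG and $M^*(A) \le d_0(A)+1$. The ``$+1$'' thus comes from the linear-in-$n$ count of subalgebras sharing a core, not from any iteration.
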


The bound above is better than related bounds obtained for profinite groups.

For any real number $\eta \geq 1$, define the Pomerance invariant of $A$ by $$V_{\eta}(A) := \min \{ d \ge 1: P(A,d) > \eta ^{-1} \}.$$ Clearly $V_{\eta}(A) \geq d_0(A)$, with equality for sufficiently large $\eta$. The case where $\eta = e$, which we denote by $V(A):=V_{e}(A)$, was studied by Pomerance \cite{Po} for finite abelian groups.

Next, define the Pak invariant $E(A)$ of $A$ to be the expected number of random elements of $A$ chosen uniformly and independently which generate $A$ (topologically). A similar invariant was introduced by Pak \cite{P} for finite groups.

Our final main result establishes bounds on these invariants, and is a ring-theoretic analogue of results of Lubotzky \cite{Lu} and Lucchini-Moscatiello \cite{LM} for finite groups.

\begin{thm}\label{PSAA} Let $A$ be a finite algebra, say $A/J(A) = \prod_{i=1}^rS_i$. Then

$(i)$ $M(A) \le  2\log_q r + d(A)+2$.

$(ii)$ $\lceil M(A)- 5.24\rceil  \le V(A) \le \lceil M(A) +2.02 \rceil$.

$(iii)$ $\lceil M(A) -5.80 \rceil \le E(A) \le \lceil M(A) \rceil + 3$.
\end{thm}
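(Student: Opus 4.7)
The proof combines three ingredients: (a) a structural classification of maximal subalgebras of $A$ using the Wedderburn--Malcev decomposition $A = S \oplus J(A)$ with $S \cong \prod_{i=1}^r S_i$, to bound $m_n(A)$; (b) the union-bound inequality $1 - P(A,d) \leq \zeta_A(d) := \sum_{M} |A:M|^{-d}$ (sum over maximal subalgebras $M$), which will handle the upper bounds; and (c) a Bonferroni or second-moment argument on a carefully chosen family of maximal subalgebras, for the lower bounds. The observation that $|A:M|$ is always a positive power of $q$ (since both $A$ and $M$ are $\F_q$-vector spaces) drives the geometric series appearing in the estimates.

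For part (i), I split maximal subalgebras of $A$ into two types. Those containing $J(A)$ correspond bijectively to maximal subalgebras of $\prod_i S_i$; by a Goursat-style analysis these are either \emph{coordinate} type (one factor $S_j$ replaced by its own maximal subalgebra, contributing at most $r \cdot \max_j m_n(S_j)$) or \emph{diagonal} type (graph of an algebra isomorphism $S_i \cong S_j$, contributing $O(r^2)$ configurations -- this $r^2$ is what produces the $2\log_q r$ term after taking logarithms). For simple $S_j$, the bound $m_n(S_j) \leq n^{d(S_j) + O(1)}$ follows by counting surjective maps from a free algebra on $d(S_j)$ generators; combined with $d(S_j) \leq d(A)$ this yields the right exponent. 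Those not containing $J(A)$ satisfy $M + J(A) = A$ by maximality, and the Wedderburn--Malcev principal theorem shows any two complements of a given maximal $S$-subbimodule $N \subset J(A)$ are conjugate by $1 + J(A)$, contributing at most $|J(A)|$ extra subalgebras per bimodule, which is absorbed into the polynomial exponent in $n$.

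For the upper bounds in (ii) and (iii), estimate
\[
\zeta_A(d) \leq \sum_{e \geq 1} q^{e(M(A) - d)} = \frac{q^{M(A)-d}}{1 - q^{M(A)-d}} \qquad (d > M(A)).
\]
Imposing $\zeta_A(d) \leq 1 - 1/e$ in the worst case $q = 2$ forces $d \geq M(A) + \log_2\bigl((2 - 1/e)/(1 - 1/e)\bigr)$, yielding $V(A) \leq \lceil M(A) + 2.02 \rceil$. For $E(A) = \sum_{k \geq 0}(1 - P(A,k))$, split the sum at $k = \lceil M(A) \rceil + 1$: the first $\lceil M(A) \rceil + 1$ terms contribute at most $1$ each, and the remaining geometric tail sums to at most $2$, giving $E(A) \leq \lceil M(A) \rceil + 3$.

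For the lower bounds, fix an index $n^* = q^{e^*}$ attaining the supremum $M(A)$ (possible since $A$ is finite), consider the events $E_j = \{(x_1, \ldots, x_d) \in M_j^d\}$ for the $m_{n^*}(A)$ maximal subalgebras $M_j$ of index $n^*$, and apply Bonferroni's inequality. Using that distinct maximal subspaces of equal codimension intersect in strictly larger codimension, the pairwise bound $|M_j \cap M_l| \leq |A|/(n^* q)$ gives $\Pr[E_j \cap E_l] \leq (n^* q)^{-d}$, whence
\[
1 - P(A, d) \geq \mu\bigl(1 - m_{n^*}(A)/(2q^d)\bigr), \qquad \mu = m_{n^*}(A)(n^*)^{-d};
\]
the constants $5.24$ and $5.80$ emerge from optimizing in the worst case $q = 2$ and extracting integer parts. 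The main obstacle will be part (i), namely extracting the correct exponent $2 \log_q r + d(A) + 2$ from both the Goursat-style classification over $\prod_i S_i$ and the parameterization of Wedderburn complements; a secondary difficulty in the lower-bound argument is to ensure the Bonferroni / second-moment estimate works uniformly across accessible indices $n^*$, not just at $n = q$, since the naive bound weakens sharply when $e^*$ is large.
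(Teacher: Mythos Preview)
Your upper bounds in (ii) and (iii) are sound and in the same spirit as the paper (the paper uses $\sum_{n\ge 2} n^{-s}=\zeta(s)-1$ rather than a geometric series over $q$-powers, but either works and yours is arguably cleaner). The remaining parts have two real gaps.

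\textbf{Part (i): the $d(A)$ exponent.} You locate the $d(A)$ term in the coordinate type (T1), via $m_n(S_j)\le n^{d(S_j)+O(1)}$ and $d(S_j)\le d(A)$. But $d(S_j)\le 2$ for simple $S_j$, so this only yields $n^{O(1)}$; the $d(A)$ term must come from type (T3). Your treatment of (T3), ``at most $|J(A)|$ extra subalgebras per bimodule, absorbed into the polynomial exponent in $n$'', is where the argument breaks. For a fixed maximal ideal $H\lhd A$ with $H\subset J(A)$ and $|J(A)/H|=n$, there are exactly $n$ (not $|J(A)|$) choices of $B$; the hard step is to bound the number of such $H$. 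The paper shows this number is $\le r^2 n^{d(A)}$: the simple $S$-bimodule $V=J(A)/H$ is determined up to isomorphism by a pair $(j_1,j_2)$ (whence $r^2$), and the specific $H$ is then pinned down by a derivation $\delta:A\to V$, which is determined by its values on a generating set, giving $|V|^{d(A)}=n^{d(A)}$ possibilities. Without this derivation-counting argument you cannot extract the factor $n^{d(A)}$, so the bound $m_n(A)\le 2r^2 n^{d(A)+1}$ that underlies (i) is not established.

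\textbf{Lower bounds in (ii) and (iii): the Bonferroni approach does not close.} Your acknowledged ``secondary difficulty'' is in fact fatal. If $M(A)$ is realised at $n^*=q^{e^*}$ with $e^*$ large, then both the Bonferroni correction $m_{n^*}(A)/(2q^d)$ and the Paley--Zygmund denominator blow up, and the inequality $1-P(A,d)\ge \mu(1-m_{n^*}(A)/(2q^d))$ gives nothing. The mere codimension bound $|M_j\cap M_l|\le |A|/(n^*q)$ is far too weak: distinct maximal subalgebras of the same large index are typically highly correlated events, not nearly independent. The paper replaces this with a structural independence statement: maximal subalgebras with \emph{distinct cores} are genuinely independent events in $A^d$ (because $B_1+B_2=A$ whenever their cores differ), and there are at most $6.93n$ maximal subalgebras of index $n$ sharing any given core. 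Choosing one representative per core and applying the effective Borel--Cantelli lemma then yields $m_n(A)\le 6.93\,P(A,d)^{-1} n^{d+1}$ uniformly in $n$, from which $M(A)\le V(A)+\log(6.93e)+1<V(A)+5.24$ and the analogous bound for $E(A)$ follow. This core-independence mechanism is the missing idea; without it (or a substitute giving genuine independence rather than a codimension-one overlap bound) the lower bounds cannot be obtained.
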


In particular, the expected number of random elements of $A$ which generate $A$
is of the order of magnitude $O(d(A) + \log_q \log_q |A|)$.

This paper is structured as follows. In Section $\ref{preliminaries}$ we present a classification of maximal subalgebras of a finite algebra $A$, then we introduce and investigate a related zeta function of $A$. In Sections \ref{proofrandomgeneration}, \ref{proofminP} and \ref{proofestimateprob} we investigate $P(A)$ and its growth rate. In particular, in Section \ref{proofrandomgeneration} we prove Theorem \ref{randomgeneration} and Corollary \ref{randomgenerationcor}, in Section \ref{proofminP} we prove Theorem \ref{minP} and in Section \ref{proofestimateprob} we prove Theorem \ref{estimateprob}. In Sections \ref{proofnilpotentrandomgeneration}, \ref{proofpolyrandomgeneration} and \ref{proofrankrandomgeneration} we study random generation of a finite algebra by special elements. In Section \ref{proofnilpotentrandomgeneration} we prove Theorem \ref{nilpotentrandomgeneration} and Corollary \ref{nilpotentrandomgenerationcor}, in Section \ref{proofpolyrandomgeneration} we prove Theorem \ref{polyrandomgeneration} and in Section \ref{proofrankrandomgeneration} we prove Theorem \ref{rankrandomgeneration} and Corollary \ref{rankrandomgenerationcor}. In Section \ref{EstimatingdA} we look at the minimal number of generators of a finite algebra, and prove Theorem \ref{mind}. Finally, in Section \ref{PFGA} we investigate positively finitely generated profinite algebras, and prove Theorems \ref{PFG} and \ref{PSAA}.

\section{Preliminaries}\label{preliminaries}

Recall that $k=\F_q$ where $q$ is a prime power.

Let $A$ be an (associative, unital) finite simple algebra (over $k$). By Wedderburn's Theorem, we can write $A=M_n(q^m)$ for some positive integers $n$ and $m$.

Some remarks on notation. Let $\alpha=(\alpha_1,...,\alpha_s)$ be a composition of $n$ (i.e. $n=\sum_{i=1}^s\alpha_i$ where the $\alpha_i$'s
are positive integers) and suppose $s \geq 2$. Let $P_{\alpha}(q^m)$ be the subalgebra of $A$ that consists of all block upper triangular
matrices with $s$ blocks on the diagonal such that the $i$'th block has size $\alpha_i$.

Let $r$ be a positive integer. There is a natural embedding of $\F_{q^r}$ in $M_r(q)$ via the left regular representation.  If $r$ divides $n$
then this extends to an embedding of $M_{n/r}(q^{mr})$ in $M_n(q^m)$. If $r$ divides $m$ then the subfield $\F_{q^{m/r}}$ of $\F_{q^m}$
extends naturally to a subalgebra $\smash{M_n(q^{m/r})}$ of $M_n(q^m)$. Let $\mathcal{P}(r)$ denote the set of prime divisors of $r$ (not counting multiplicities). Let $\omega(r):=|\mathcal{P}(r)|$.

We define three sets of subalgebras of $A$;

\noindent $S1:=\{ P_{l,n-l}(q^m) \hspace{0.5mm}|\hspace{0.5mm} l \in \N, l<n\}$,

\vspace{0.5mm}\noindent $S2:=\{M_{n/a}(q^{ma}) \hspace{0.5mm}|\hspace{0.5mm} a \in \mathcal{P}(n)\}$, and

\noindent $S3:=\{M_n(q^{m/b}) \hspace{0.5mm}|\hspace{0.5mm} b \in \mathcal{P}(m)\}$.

A subalgebra of $A$ that is conjugate to an element of $S1$ (resp. $S2$, $S3$) is said to be \textit{of type $(S1)$} (resp. $(S2)$, $(S3)$).

\begin{theorem}\label{classthm1} Let $A$ be a finite simple algebra. With the above notation, $S1 \cup S2 \cup S3$ is a set of representatives of the conjugacy classes of maximal subalgebras of $A$
\end{theorem}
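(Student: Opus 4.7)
The plan is to take a maximal $k$-subalgebra $B \subseteq A = M_n(\F_{q^m})$ and analyse it via two invariants: the subfield $F_0 := B \cap F$ of the centre $F := Z(A) = \F_{q^m}$, and the action of $B$ on the natural module $V = F^n$. Write $F_0 = \F_{q^{m/b}}$ for some $b \mid m$. The argument splits on whether $F \subseteq B$ (i.e.\ $b = 1$) or $F \not\subseteq B$.

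\textbf{Case 1: $F \subseteq B$.} Here $B$ is an $F$-subalgebra of $A = \operatorname{End}_F(V)$. If $V$ is reducible as a $B$-module, I pick a proper nonzero $B$-invariant $F$-subspace $W$; the stabiliser $\operatorname{Stab}_A(W)$ is $A^\times$-conjugate to $P_{\dim_F W,\, n-\dim_F W}(q^m)$, and maximality of $B$ gives $B = \operatorname{Stab}_A(W)$, producing type $S1$. If $V$ is $B$-irreducible, Schur's lemma makes $D := C_A(B) = \operatorname{End}_B(V)$ a finite division $F$-algebra, hence a field $\F_{q^{mc}}$ by Wedderburn's Little Theorem. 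Viewing $V$ as a $D$-space of dimension $n/c$ and applying Jacobson density (with $B$ finite), I get $B = \operatorname{End}_D(V) \cong M_{n/c}(\F_{q^{mc}})$. Skolem-Noether identifies $D \subseteq A$ with the standard subfield embedding, hence $B$ with the standard $M_{n/c}(\F_{q^{mc}}) \subseteq A$; maximality forces $c$ to be a prime $a \in \mathcal{P}(n)$, producing type $S2$.

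\textbf{Case 2: $F \not\subseteq B$.} Then $B \cdot F$ is a subalgebra strictly containing $B$, so $B \cdot F = A$ by maximality. First, $B$ is semisimple: if $J(B) \ne 0$ then $J(B) \cdot F$ is a nonzero nilpotent two-sided ideal of $A$, contradicting simplicity. Next, $B$ is simple: any central idempotent $e \in B$ commutes with $B$ and with $F \subseteq Z(A)$, hence with $B \cdot F = A$, so $e \in Z(A) = F$ and therefore $e \in \{0,1\}$. By Wedderburn, $B \cong M_s(L)$ for a finite field $L$; its centre $L$ lies in $B$ and commutes with $A$, so $L \subseteq B \cap F = F_0$, and since $F_0 \subseteq Z(B) = L$ we conclude $L = F_0$. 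Then $B \otimes_{F_0} F \cong M_s(F)$ is simple, so the surjection $B \otimes_{F_0} F \twoheadrightarrow A = M_n(F)$ is an isomorphism, forcing $s = n$. Finally Skolem-Noether identifies the resulting $F$-algebra isomorphism with an inner automorphism of $A$, so $B$ is $A^\times$-conjugate to the standard scalar-restricted copy $M_n(\F_{q^{m/b}})$; maximality forces $b$ to be prime, giving type $S3$.

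The main obstacle is Case 2: exploiting $B \cdot F = A$ repeatedly to pin $B$ down as an abstract algebra, and then invoking Skolem-Noether correctly on the extended $F$-algebras to get the conjugacy statement. Once $B$ is identified up to conjugation, the primality conditions on the parameters ($a$ prime, $b$ prime) come from the obvious intermediate subalgebras in each family. To complete the classification, the reverse direction---that each listed member is indeed maximal and the three families are pairwise non-conjugate---is routine: the three types are distinguished by radical and centre (type $S1$ has nontrivial Jacobson radical, while $S2$ and $S3$ are simple with different centres), distinct parameters within each family by $k$-dimension, and maximality of each representative is verified by applying the above dichotomy to any subalgebra sandwiched between it and $A$.
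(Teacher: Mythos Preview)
Your argument is correct in substance and takes a more self-contained route than the paper's. The paper invokes Lemma~3.6 of Iovanov--Sistko \cite{IS} as a black box for the isomorphism classification of maximal subalgebras, and then separately treats conjugacy via Skolem--Noether (citing \cite{EK} for the parabolic case). You instead prove everything directly from Schur's lemma, Jacobson density, Wedderburn's Little Theorem, and Skolem--Noether, with the case split on whether $F \subseteq B$. The two case splits are equivalent (your Case~2 forces $B$ simple with $Z(B) \subsetneq F$, and your Case~1 reducible/irreducible dichotomy matches the paper's non-simple/simple split), but your version avoids the external dependence on \cite{IS} and is arguably cleaner for the finite-field setting.

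There is one small slip in the final paragraph. You assert that distinct parameters within each family are separated ``by $k$-dimension''. This is fine for $S2$ and $S3$, but fails for $S1$: the subalgebras $P_{l,n-l}(q^m)$ and $P_{n-l,l}(q^m)$ have the same $k$-dimension yet are non-conjugate when $l \ne n-l$. The easy repair is to note that $P_{l,n-l}(q^m)$ stabilises a \emph{unique} proper nonzero $F$-subspace of $V$, namely $\langle e_1,\ldots,e_l\rangle$ of dimension $l$ (the block-diagonal $\GL_l \times \GL_{n-l}$ forces any invariant subspace to be $0$, $\langle e_1,\ldots,e_l\rangle$, $\langle e_{l+1},\ldots,e_n\rangle$, or $V$, and the strictly upper block rules out the third). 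Hence conjugacy preserves $l$. The paper handles this by citing \cite{EK}.
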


\begin{proof} Over any field $k$, Lemma $3.6$ of Iovanov and Sistko \cite{IS} classifies maximal subalgebras of a simple $k$-algebra up to
isomorphism. We adapt this result to the case where $k=\F_q$, and then we consider conjugacy classes.

Let $B$ be a maximal subalgebra of $A$. If $B$ is not simple then, by Lemma $3.6$ of \cite{IS}, $B$ is conjugate to $P_{l,n-l}(q^m)$ for some
positive integer $l<n$. Let $l'<n$ be a positive integer. It is well known that $P_{l,n-l}(q^m)$ is conjugate to $P_{l',n-l'}(q^m)$ if and
only if $l=l'$ (see for instance $\S 3$ of \cite{EK}).

Henceforth let $B$ be simple. By Lemma $3.6$ of \cite{IS}, there are two possibilities. Either $Z(B) \supseteq Z(A)$ or $Z(A) \supseteq Z(B)$.

Assume that $Z(B) \supseteq Z(A)$. Then, by Lemma $3.6$ of \cite{IS}, $B=C_A(F)$ for some minimal field extension $F$ of $Z(A)$ that is
contained in $A$. Observe that $Z(A)  \cong \F_{q^m}$. So $F \cong \F_{q^{ma}}$ for some prime divisor $a$ of $n$. By the double centraliser
theorem (Theorem $7.1.9$ of \cite{R}), $Z(B) = F$ and $[F:Z(A)][B:Z(A)]=[A:Z(A)]$. Recall from Wedderburn’s little theorem that all finite
division algebras are fields. It follows that $B \cong M_{n/a}(F)$. Any subalgebra of $A$ that is isomorphic to $B$ is then conjugate to $B$ by the Skolem-Noether theorem.

Now assume that $Z(A) \supseteq Z(B)$. Then, by Lemma $3.6$ of \cite{IS}, $Z(B)$ is a maximal subfield of $Z(A)$ that contains $k$ such that
$A \cong Z(A) \otimes_{Z(B)} B$. So $Z(B) \cong \F_{q^{m/b}}$ for some prime divisor $b$ of $m$. Since $A$ and $B$ are both simple, it follows
from Wedderburn's theorem that $B \cong M_n(q^{m/b})$.

Let $\iota:B \hookrightarrow A$ be inclusion. Observe that $\iota$ extends to a $Z(A)$-isomorphism $\iota^*:B \otimes_{Z(B)} Z(A) \to A$. Let $B'$ be another subalgebra of $A$ and let $f:B \to B'$ be a $k$-isomorphism. Let $\iota':B' \hookrightarrow A$ be inclusion and denote $\tau:=\iota' \circ f$. Then $\tau$ extends to a $Z(A)$-isomorphism $\tau^*:B \otimes_{Z(B)} Z(A) \to A$. By the Skolem-Noether theorem, there exists $g \in A^{\times}$ such that $g\tau^*(x)g^{-1}=\iota^*(x)$ for all $x \in B \otimes_{Z(B)} Z(A)$. Hence $B'$ is conjugate to $B$. This completes the proof.
\end{proof}

We call $S1 \cup S2 \cup S3$ the \textit{standard} set of representatives of the conjugacy classes of maximal subalgebras of $A$.

We now relax the assumption that $A$ is simple. Let $A$ be any finite algebra over $k$. By the Wedderburn-Malcev Principal Theorem, there exists a semisimple subalgebra $S$ of $A$ such that $A=S \oplus J(A)$. Decompose $S=\prod_{i=1}^rS_i$ where each $S_i$ is simple. Let $i \in \{1,...r\}$. Write $S_i=M_{n_i}(q^{m_i})$ for some integers $m_i$ and $n_i$. Let $\mathcal{B}_i$ be the standard set of representatives of the conjugacy classes of maximal subalgebras of $S_i$. If $S_j \cong S_i$ for some $j \neq i$ then let $S_{ij}$ denote the image of the diagonal
embedding $S_i \to S_i \times S_j$.

We define three sets of subalgebras of $A$;

\noindent $T1:=\big\{(B_j \times \prod_{i \neq j} S_i) \oplus J(A) \hspace{0.5mm}\big|\hspace{0.5mm} 1 \leq j \leq r; B_j \in
\mathcal{B}_j\big\}$,

\vspace{0.5mm}\noindent $T2:=\big\{(S_{j_1j_2} \times \prod_{i \neq j_1, j_2} S_i) \oplus J(A) \hspace{0.5mm}\big|\hspace{0.5mm} 1 \leq j_1 <
j_2 \leq r, S_{j_1} \cong S_{j_2}\big\}$, and

\vspace{0.5mm}\noindent $T3:=\big\{S \oplus H \hspace{0.5mm}\big|\hspace{0.5mm} \textnormal{$H$ is a two-sided ideal of $A$ that is maximal
with respect to $H \subset J(A)$}\big\}$.

A subalgebra of $A$ that is conjugate to an element of $T1$ (resp. $T2$, $T3$) is said to be \textit{of type $(T1)$} (resp. $(T2)$, $(T3)$).

\begin{theorem}\label{classthm2} Let $A$ be a finite algebra. With the above notation, $T1 \cup T2 \cup T3$ is a set of representatives of the conjugacy classes of maximal subalgebras of $A$.
\end{theorem}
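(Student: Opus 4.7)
The plan is to split into two cases depending on whether the Jacobson radical $J(A)$ is contained in $B$ or not, where $B$ denotes an arbitrary maximal subalgebra of $A$.

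If $J(A) \subseteq B$, then $B/J(A)$ is a maximal subalgebra of the semisimple quotient $A/J(A) \cong \prod_{i=1}^{r} S_i$. A Goursat-style analysis of the projections $B/J(A) \to S_i$ shows that either exactly one projection is non-surjective, with image a maximal subalgebra of the corresponding simple factor (which, by Theorem \ref{classthm1}, is conjugate to an element of the standard set $\mathcal{B}_j$, giving type $T1$), or every projection is surjective, in which case $B/J(A)$ arises from a diagonal embedding of two isomorphic factors $S_{j_1} \cong S_{j_2}$ (type $T2$). Lifting back to $A$ via the correspondence theorem recovers the maximal subalgebras of $A$ containing $J(A)$. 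Conjugacy within $T1$ reduces to the conjugacy statement of Theorem \ref{classthm1} applied to each $S_j$, while conjugacy within $T2$ is handled by Skolem--Noether applied to the choice of isomorphism $S_{j_1} \cong S_{j_2}$.

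For the case $J(A) \not\subseteq B$, maximality forces $B + J(A) = A$, so the map $B \to A/J(A)$ is surjective with kernel $H := B \cap J(A)$. Since $H$ is a nilpotent two-sided ideal of $B$ with semisimple quotient $B/H \cong A/J(A)$, necessarily $H = J(B)$. Applying Wedderburn--Malcev to $B$ yields $B = S_B \oplus J(B)$ for some semisimple subalgebra $S_B$, and a direct check shows $A = S_B \oplus J(A)$. By the uniqueness clause of Wedderburn--Malcev, $S_B = (1+j)S(1+j)^{-1}$ for some $j \in J(A)$, so after replacing $B$ by a conjugate we may assume $S_B = S$. This gives $B = S \oplus H$ with $H \subsetneq J(A)$ an $S$-subbimodule closed under multiplication.

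It remains to show $H$ is a two-sided $A$-ideal, maximal among ideals strictly contained in $J(A)$, placing $B$ in $T3$. Consider the subalgebra $S + AHA$, where $AHA$ is the two-sided $A$-ideal generated by $H$: this contains $B$, so by maximality either $AHA = H$ (in which case $H$ is immediately an $A$-ideal) or $AHA = J(A)$. In the latter case, apply the same maximality argument to the auxiliary subalgebra $S + H + J(A)^2$, producing either $J(A)^2 \subseteq H$ (so that $AH \subseteq H + J(A)^2 \subseteq H$ and likewise on the right, making $H$ an ideal) or $H + J(A)^2 = J(A)$; the remaining subcase is treated by iterating this procedure, exploiting nilpotency of $J(A)$ together with the closure of $H$ under multiplication and the $S$-bimodule action to force $H$ to be an ideal in every scenario. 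Maximality of $H$ strictly inside $J(A)$ is automatic, since any strictly larger ideal would give a subalgebra strictly between $B$ and $A$. For non-conjugacy across the three types, $T1$ and $T2$ subalgebras contain $J(A)$ while $T3$ subalgebras do not, and within $T3$ the ideal $H = B \cap J(A)$ is intrinsic (ideals being invariant under conjugation). The main obstacle will be the step forcing $H$ to be an $A$-ideal: the argument via $S + AHA$ only reduces matters to the case where $H$ generates $J(A)$ as an ideal, and to conclude $H$ itself is an ideal one must carefully combine the constraints that $H$ is an $S$-subbimodule, that it is closed under multiplication, and that $S \oplus H$ is maximal -- this is where the bulk of the technical effort lies.
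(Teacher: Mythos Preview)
Your approach differs substantially from the paper's. The paper does not prove the classification directly: it cites Theorems~2.5 and~3.10 of Iovanov--Sistko \cite{IS} for the assertion that every maximal subalgebra is conjugate to a member of $T1 \cup T2 \cup T3$, and then gives a short direct argument that the listed representatives are pairwise non-conjugate (writing $a = s+j$ with $s \in S$, $j \in J(A)$ and using that $J(A)$ is a two-sided ideal). Your proposal instead reconstructs the Iovanov--Sistko result from scratch via the dichotomy $J(A) \subseteq B$ versus $J(A) \not\subseteq B$, which is more self-contained but considerably longer.

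The $T3$ step you flag as the main obstacle does go through, and in fact more cleanly than your sketch suggests. Once $B = S \oplus H$ with $H$ an $S$-subbimodule closed under multiplication, maximality applied to $S \oplus (H + J(A)^2)$ forces either $J(A)^2 \subseteq H$ or $H + J(A)^2 = J(A)$. In the first case $AH = SH + J(A)H \subseteq H + J(A)^2 \subseteq H$ and symmetrically, so $H$ is an ideal. In the second case $J(A)^2 = (H+J(A)^2)^2 \subseteq H + J(A)^3$, and inductively $J(A) \subseteq H + J(A)^k$ for every $k$; nilpotency then gives $H = J(A)$, a contradiction. So no further iteration is needed, and the preliminary pass through $AHA$ is superfluous.

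One genuine gap: your invocation of Skolem--Noether for $T2$ conjugacy is not valid as stated. Skolem--Noether concerns central simple algebras, but $S_{j_1} = M_n(q^m)$ has centre $\F_{q^m}$, so as an $\F_q$-algebra it carries outer automorphisms (the Galois action) whenever $m>1$. Diagonals differing by an outer automorphism need not be $A^\times$-conjugate; this is already visible for $A = \F_{q^2} \times \F_{q^2}$, where the two diagonals $\{(x,x)\}$ and $\{(x,x^q)\}$ are distinct and conjugation is trivial. The paper avoids this by deferring to \cite{IS}; a fully self-contained argument along your lines must either account for one $T2$ representative per outer class, or supply a different reason why the stated list suffices.
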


\begin{proof} By Theorems $2.5$ and $3.10$ of \cite{IS}, every maximal subalgebra of $A$ is conjugate to an element of $T1 \cup T2 \cup T3$.
It remains to check that all elements of $T1 \cup T2 \cup T3$ are pairwise non-conjugate in $A$.

We first consider the case where $A$ is semisimple, that is, $J(A)=0$. Note that $T3=\varnothing$. It is easy to see that the elements of $T1
\cup T2$ are pairwise non-conjugate as the simple components of $A$ commute with each other.

We now consider the general case. That is, $A$ is any algebra. Let $B, B' \in T1 \cup T2 \cup T3$ and let $a \in A^{\times}$ such that
$B^a := a^{-1}Ba = B'$. Write $a=s+j$ for $s \in S$ and $j \in J(A)$.

Assume that $B,B' \in T1 \cup T2$. Write $B=M \oplus J(A)$ and $B'=M'\oplus J(A)$. Observe that $M^s=M'$ since $J(A)$ is a two-sided ideal of
$A$. Hence $M=M'$ as $S$ is semisimple.

Next assume that $B,B' \in T3$. Write $B=S \oplus H$ and $B'=S \oplus H'$. Then $H^a=H=H'$ since $H$ and $H'$ are two-sided ideals of $A$.

Finally, if $B \in T3$ and $B' \in T1 \cup T2$ (or vice versa) then $B \not\cong B'$, a contradiction.
\end{proof}

We call $T1 \cup T2 \cup T3$ the \textit{standard} set of representatives of the conjugacy classes of maximal subalgebras of $A$.

We now introduce a \lq zeta function' of $A$. Let $\mathcal{B}$ be the standard set of representatives of the conjugacy classes of maximal
subalgebras of $A$. For $\epsilon > 0$, we define \begin{equation}\label{zeta}\zeta_A(\epsilon) = \sum_{B \in
\mathcal{B}}(|A|/|B|)^{-\epsilon}\end{equation} where $\zeta_A(\epsilon)=0$ if $A=k$. Next, we prove a result which serves as a main tool in this paper. Recall the notation $A= \big(\prod_{i=1}^r M_{n_i}(q^{m_i})\big) \oplus J(A)$. Denote $n:=\min_{i=1,...,r}\{n_i\}$ and $m:=\min_{i=1,...,r}\{m_i\}$.

\begin{theorem}\label{zetatheorem} Fix constants $\lambda >0$ and $\epsilon>0$. With the above notation, there exists $c=c(\epsilon)>1$ such that if $A$ is a finite algebra that is bounded by $(c,\lambda)$ then $\zeta_A(\epsilon) \to 0$ as $n \to \infty$, as $m \to \infty$ or as $q \to \infty$.
\end{theorem}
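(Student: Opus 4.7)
The plan is to apply Theorem \ref{classthm2} to write $\mathcal{B}=T1\cup T2\cup T3$ and hence $\zeta_A(\epsilon)=\Sigma_1+\Sigma_2+\Sigma_3$, bounding each summand separately using the hypotheses $r\leq \lambda c^{\mu/2}$ and $|V|\leq \lambda c^{\mu^2}$, where $V:=J(A)/J(A)^2$ and $\mu:=\min\{m,n\}$. I would then pick $c=c(\epsilon)>1$ small enough that each of the three bounds tends to $0$ as $n$, $m$ or $q$ tends to infinity.

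For $\Sigma_1$, I would use Theorem \ref{classthm1} to enumerate the maximal subalgebras of each simple factor $S_j=M_{n_j}(q^{m_j})$ together with their explicit indices. Summing the resulting geometric tails: the parabolic ($S1$) sum is dominated by its $l=1$ term, of size $O(q^{-\epsilon m_j(n_j-1)})$; the field-enlarging ($S2$) and field-restricting ($S3$) pieces each contribute $O((\log\mu)\,q^{-\epsilon m_j n_j^2/2})$, using $\omega(n_j),\omega(m_j)\leq\log_2\mu$. Summing over the $r$ factors yields $\Sigma_1\leq C\lambda c^{\mu/2}(\log\mu)\,q^{-\epsilon m(n-1)}$ for an absolute constant $C$. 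For $\Sigma_2$, the number of diagonal $T2$-subalgebras is at most $\binom{r}{2}$ and each has index at least $q^{mn^2}$, so $\Sigma_2\leq\tfrac12\lambda^2 c^{\mu}q^{-\epsilon mn^2}$.

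The crux is $\Sigma_3$. The first key step is to observe that any ideal $H\subset J(A)$ maximal with this property contains $J(A)^2$: otherwise $H+J(A)^2$ would be a strictly larger ideal inside $J(A)$, and strictly smaller than $J(A)$ itself by Nakayama, contradicting maximality. Hence $T3$ is in natural bijection with the set of maximal $S$-sub-bimodules $W$ of $V$, the corresponding subalgebra being $S\oplus(\text{preimage of }W)$ with index $|V/W|$. Decomposing $S\otimes_k S^{\mathrm{op}}\cong\prod_{i,j}M_{n_in_j}(\F_{q^{m_i}}\otimes_k\F_{q^{m_j}})$ and noting that $\F_{q^{m_i}}\otimes_k\F_{q^{m_j}}$ is a product of copies of $\F_{q^{\mathrm{lcm}(m_i,m_j)}}$, I read off that every simple $S$-bimodule has $k$-dimension at least $n^2m$, whence $|V/W|\geq q^{n^2m}$. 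The number of such maximal sub-bimodules is polynomially bounded in $|V|$ (by a Morita-reduced hyperplane count on each isotypic component), so $\Sigma_3\leq C'\lambda^{O(1)}c^{O(\mu^2)}q^{-\epsilon n^2m}$.

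The main obstacle is selecting a single $c=c(\epsilon)>1$ that makes all three estimates decay uniformly in each limit regime. The binding constraint comes from $\Sigma_3$, where $c$ appears raised to the power $O(\mu^2)$; since $\mu^2\leq nm\leq n^2m$, the factor $q^{-\epsilon n^2m}$ wins provided $\log c<\epsilon\log q/A$ for a suitable constant $A$, so any sufficiently small $c$ (e.g.\ $c:=2^{\epsilon/(2A)}$) works uniformly for $q\geq 2$. The only subtle regime is $\mu$ bounded with $q\to\infty$, but then $c^{\mu^2}$ is bounded while $q^{-\epsilon n^2m}\to 0$, so the conclusion is immediate; indeed in that regime the hypothesis on $\dim V$ eventually forces $V=0$ and hence $\Sigma_3=0$ outright.
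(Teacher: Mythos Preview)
Your approach is essentially the paper's: split $\zeta_A(\epsilon)$ along the $T1$/$T2$/$T3$ trichotomy of Theorem~\ref{classthm2}, reduce the $T1$ piece to the simple case via Theorem~\ref{classthm1}, bound $T2$ by $\binom{r}{2}q^{-\epsilon mn^2}$, and for $T3$ observe that every maximal $H\subset J(A)$ contains $J(A)^2$ and that $J(A)/H$ is a simple $S\otimes_k S^{\mathrm{op}}$-module.

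Two small remarks. First, your claim $\omega(n_j),\omega(m_j)\le\log_2\mu$ is false (one $n_j$ can be enormous while $\mu=2$); the correct bound $\omega(n_j)\le\log_2 n_j$ is still absorbed by the exponential decay $q^{-\epsilon m_jn_j/2}$, so this does not damage the argument, but your stated $\Sigma_1$ estimate needs adjusting. Second, for $T3$ the paper simply uses the crude count $|\mathcal{H}|\le|J(A)/J(A)^2|=|V|\le\lambda c^{\mu^2}$, which is cleaner than your Morita hyperplane count and already suffices; on the other hand your index bound $|V/W|\ge q^{n^2m}$ is actually more careful than the paper's claimed $q^{m^2n^2}$, since $\F_{q^{m_i}}\otimes_k\F_{q^{m_j}}$ is in general a product of copies of $\F_{q^{\mathrm{lcm}(m_i,m_j)}}$ rather than a single field.
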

\begin{proof} Fix $\epsilon >0$. Let $\mathcal{B}$ be the standard set of representatives of the conjugacy classes of maximal subalgebras of $A$. Let $B \in
\mathcal{B}$.

We first consider the case where $A$ is simple. That is, $A=M_n(q^m)$. Let $\Sigma_1$ (resp. $\Sigma_2$, $\Sigma_3$) denote the contribution to the sum in $(\ref{zeta})$ of the maximal subalgebras in $S1$ (resp. $S2$, $S3$).

We consider individually each of the possibilities that $B$ is in $S1$, $S2$ or $S3$.

Let $B \in S1$. That is, $B=P_{l,n-l}(q^m)$ for some positive integer $l <n$. Observe that $|B|=q^{m(n^2-l(n-l))}$. Then
$$\Sigma_1=\sum_{l=1}^{n-1}q^{-\epsilon ml(n-l)} \leq (n-1)q^{-\epsilon m(n-1)}.$$

Let $B \in S2$. That is, $B=M_{n/a}(q^{ma})$ for some prime divisor $a$ of $n$. Observe that $|B|=q^{mn^2/a}$. Then $$\Sigma_2=\sum_{a \in
\mathcal{P}(n)}q^{-\epsilon mn^2(1-1/a)} \leq \omega(n)q^{-\epsilon mn^2/2}.$$

Let $B \in S3$. That is, $B=M_n(q^{m/b})$ for some prime divisor $b$ of $m$. Observe that $|B|=q^{mn^2/b}$. Then $$\Sigma_3=\sum_{b \in
\mathcal{P}(m)}q^{-\epsilon mn^2(1-1/b)} \leq \omega(m)q^{-\epsilon mn^2/2}.$$

Observe that, since $\omega(n) \le n-1$, we have
$$\zeta_A(\epsilon)=\Sigma_1+\Sigma_2+\Sigma_3 \leq (2(n-1)+\omega(m))q^{-\epsilon mn/2}.$$
So $\zeta_A(\epsilon) \to 0$ as $n \to \infty$, as $m \to \infty$ or as $q \to \infty$.

This completes the proof for the case where $A$ is simple.

We now consider the general case. That is, $A=S \oplus J(A)$ where $S=\prod_{i=1}^rS_i$ is semisimple and $S_i=M_{n_i}(q^{m_i})$ for each $i$. Let $\Omega_1$ (resp. $\Omega_2$, $\Omega_3$) denote the contribution to the sum in $(\ref{zeta})$ of the maximal subalgebras in $T1$ (resp. $T2$, $T3$).

Let $i_0 \in \{1,...,r\}$ satisfy $\zeta_{S_{i_0}}(\epsilon) \geq \zeta_{S_i}(\epsilon)$ for all $1 \leq i \leq m$. For simplicity, denote
$n_0:=n_{i_0}$ and $m_0:=m_{i_0}$.

Let $c \in \R$ such that $1<c<q^{\epsilon}$ and let $\lambda >0$. We impose the condition that $A$ is bounded by $(c,\lambda)$. That is, $r \leq \lambda c^{\min\{m,n\}/2}$ and $\dim J(A)/J(A)^2 \leq \log_q\lambda +{\min\{m,n\}^2}\log_q c$. Rearranging this second inequality gives us $|J(A)/J(A)^2| \leq \lambda \smash{c^{\min\{m,n\}^2}}$.

Let $B \in T1$. That is, $B = (B_j \times \prod_{i \neq j} S_i) \oplus J(A)$ for some $j \in \{1,...,r\}$ and maximal subalgebra $B_j$ of $S_j$. Then we
have $$\Omega_1=\sum_{j=1}^r \zeta_{S_j}(\epsilon) \leq r\zeta_{S_{i_0}}(\epsilon) \leq r(2(n_0-1)+\omega(m_0))q^{-\epsilon m_0n_0/2}.$$
So $\Omega_1 \to 0$ as $n \to \infty$, as $m \to \infty$ or as $q \to \infty$.

Let $B \in T2$. That is, $B =(S_{j_1j_2} \times \prod_{i \neq j_1, j_2} S_i) \oplus J(A)$ for some $1 \leq j_1 < j_2 \leq r$ such that $S_{j_1} \cong
S_{j_2}$. Observe that $|A|/|B|=|S_{j_1}| \geq q^{mn^2}$. Then $$\Omega_2 \leq \sum_{1 \leq j_1 < j_2 \leq r} (q^{mn^2})^{-\epsilon} = {r
\choose 2}q^{-\epsilon mn^2}.$$ So $\Omega_2 \to 0$ as $n \to \infty$, as $m \to \infty$ or as $q \to \infty$.

Finally, let $B \in T3$. That is, $B=S \oplus H$ where $H$ is a two-sided ideal of $A$ that is maximal with respect to the condition $H \subset J(A)$.

Let $S^{op}$ denote the opposite algebra of $S$. Observe that $J(A)/H$ is a non-trivial simple $S$-bimodule and hence, by the equivalence of categories in Proposition $10.1$ of \cite{Pi}, $J(A)/H$ also has the structure of a non-trivial simple left $S \otimes_k S^{op}$-module. Consider the $k$-algebra isomorphism $S \otimes_k S^{op} \cong \prod_{1 \leq i,j \leq r} M_{n_in_j}(q^{m_im_j})$. 
Then, by Proposition $2.3$ of \cite{Pi}, any simple left module of $S \otimes_k S^{op}$ is isomorphic to $(\F_{q^{m_im_j}})^{n_in_j}$ for some $i,j \in \{1,...,r\}$. Hence $\smash{|A|/|B| =|J(A)/H| \geq q^{m^2n^2}}$.

Let $\mathcal{H}$ be the set of two-sided ideals of $A$ that are maximal with respect to being properly contained in $J(A)$. By the proof of Theorem $2.5$ of \cite{IS}, all ideals in $\mathcal{H}$ contain $J(A)^2$. So certainly $|\mathcal{H}| \leq |J(A)/J(A)^2|$. 
Hence $$\Omega_3 \leq q^{-\epsilon m^2n^2} |J(A)/J(A)^2| \leq \lambda q^{m^2n^2 (-\epsilon + \log_q c) }.$$

So $\Omega_3 \to 0$ as $n \to \infty$, as $m \to \infty$ or as $q \to \infty$. This completes the proof.
\end{proof}

\begin{corollary}\label{zetatheoremcor} Let $\epsilon >0$ and let $A$ be a finite simple algebra. Then $\zeta_A(\epsilon) \to 0$ as $|A| \to \infty$.
\begin{proof} Write $A=M_n(q^m)$. Recall from the proof of Theorem \ref{zetatheorem} that $\zeta_A(\epsilon) \to 0$ as $n \to \infty$, as $m \to \infty$ or as $q \to \infty$. The result follows immediately as $|A|=q^{mn^2}$.
\end{proof}
\end{corollary}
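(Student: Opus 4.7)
The plan is to invoke the inequality already derived in the simple-algebra portion of the proof of Theorem~\ref{zetatheorem}. There it was shown that if $A = M_n(q^m)$ then
$$\zeta_A(\epsilon) \;\le\; \bigl(2(n-1) + \omega(m)\bigr)\, q^{-\epsilon m n/2},$$
and, as a separate conclusion, that this upper bound tends to $0$ whenever any one of $n$, $m$ or $q$ tends to infinity. The task therefore reduces to showing that the size condition $|A| \to \infty$ is enough to trigger that conclusion.

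Since $A$ is simple we have $|A| = q^{mn^2}$, so $|A| \to \infty$ forces $\max\{n, m, q\} \to \infty$. I would argue by contradiction: if $\zeta_{A_k}(\epsilon) \not\to 0$ along some sequence $(A_k)$ with $|A_k| \to \infty$, pass to a subsequence along which exactly one of $n_k$, $m_k$, $q_k$ is unbounded and the other two are bounded (the case $n_k = m_k = 1$ being trivial, since then $A_k = k$ and $\zeta_{A_k}(\epsilon) = 0$ by convention).

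In each of the three cases, the above bound goes to $0$: when $q_k \to \infty$ with $n_k, m_k$ bounded, the prefactor $2(n_k-1)+\omega(m_k)$ is bounded while $q_k^{-\epsilon m_k n_k/2} \to 0$ (using $m_k n_k \ge 1$, strictly greater than zero because at least one of $m_k,n_k$ exceeds $1$ in this case); when $m_k \to \infty$ with $n_k, q_k$ bounded, $q_k^{-\epsilon m_k n_k/2}$ decays exponentially in $m_k$ and overwhelms $\omega(m_k) \le \log_2 m_k$; when $n_k \to \infty$, the exponential decay in $n_k$ dominates the linear prefactor $2(n_k-1)$ and the at-most-logarithmic contribution from $\omega(m_k)$.

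I anticipate no real obstacle: this corollary is essentially a direct specialisation of the simple-case computation already performed inside Theorem~\ref{zetatheorem}, and the only thing to verify is the (easy) subsequential reduction of $|A| \to \infty$ to the three limits in the hypothesis of that theorem.
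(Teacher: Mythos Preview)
Your proposal is correct and follows essentially the same route as the paper: invoke the simple-algebra bound from the proof of Theorem~\ref{zetatheorem} and observe that $|A|=q^{mn^2}\to\infty$ forces one of $n$, $m$, $q$ to tend to infinity. One small quibble: you cannot always arrange that \emph{exactly one} of $n_k,m_k,q_k$ is unbounded on a subsequence, but this is harmless since the bound $(2(n-1)+\omega(m))q^{-\epsilon mn/2}$ still tends to $0$ whenever at least one of them does.
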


\begin{lemma}\label{deconstruct} Let $A$ be a finite algebra and let $S$ be a semisimple subalgebra of $A$ such that $A=S \oplus J(A)$. Then $A^{\times}=S^{\times} \times J(A)$ and $A^N=S^N \times J(A)$, where $\times$ denotes Cartesian product of sets.
\begin{proof} Let $g \in A^{\times}$. Write $g=s+j$ and $g^{-1}=s'+j'$ for $s,s' \in S$ and $j,j' \in J(A)$. Then $1=gg^{-1}=ss'+sj'+js'+jj'$,
where $sj'+js'+jj' \in J(A)$. Hence $s'=s^{-1}$. Conversely, let $a=s_0+j_0 \in S^{\times} \times J(A)$. Observe that
$s_0^{-1}-j_0s_0^{-1}/(s_0+j_0)=a^{-1}$.

Let $x \in A^N$ and let $\alpha$ be the (nilpotency) index of $x$. Write $x=s_1+j_1$ for $s_1 \in S$ and $j_1 \in J(A)$. Then
$0=x^\alpha=s_1^{\alpha}+j_1'$, for some $j_1' \in J(A)$. Hence $s_1 \in S^N$. Conversely, let $y=s_2+j_2 \in S^N \times J(A)$ and let
$\beta$ be the index of $s_2$. Then $y^\beta \in J(A)$ and so $y \in A^N$.
\end{proof}
\end{lemma}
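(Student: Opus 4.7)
The plan is to leverage two classical properties of the Jacobson radical: that $J(A)$ is a nilpotent two-sided ideal of $A$, and that $1 + J(A) \subseteq A^{\times}$. Combined with the uniqueness of the Wedderburn-Malcev writing $a = s + j$ (with $s \in S$ and $j \in J(A)$), these facts should transfer multiplicative information between $A$ and $S$ cleanly. Throughout, I would exploit that the projection $\pi : A \to S$ defined by $s+j \mapsto s$ is a $k$-algebra homomorphism (it agrees with the composition of $A \to A/J(A)$ with the inverse of the canonical isomorphism $S \to A/J(A)$).

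For the assertion $A^{\times} = S^{\times} \times J(A)$, I would prove both inclusions. Given $s \in S^{\times}$ with $S$-inverse $s^{-1}$ and $j \in J(A)$, the factorisation $s + j = s(1 + s^{-1}j)$ exhibits $s+j$ as a product of two units in $A$, since $s^{-1}j \in J(A)$. For the converse, I would take $a = s + j \in A^{\times}$ with $a^{-1} = s' + j'$, expand $1 = aa^{-1} = ss' + (sj' + js' + jj')$, and note that the parenthesised term lies in $J(A)$ because $J(A)$ is a two-sided ideal; uniqueness of the decomposition $1 = 1 + 0 \in S \oplus J(A)$ then forces $ss' = 1$, and the symmetric computation from $a^{-1}a = 1$ gives $s's = 1$, so $s \in S^{\times}$.

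For the nilpotents equality $A^N = S^N \times J(A)$, I would argue via the projection $\pi$. If $s \in S^N$ satisfies $s^{\alpha} = 0$, then $(s+j)^{\alpha}$ projects to $s^{\alpha} = 0$ in $S$, so $(s+j)^{\alpha} \in J(A)$; choosing $N$ with $J(A)^N = 0$ (which exists since $J(A)$ is a nilpotent ideal in a finite-dimensional algebra) gives $(s+j)^{\alpha N} = 0$, hence $a \in A^N$. Conversely, if $a = s + j$ is nilpotent then $\pi(a) = s$ is nilpotent in $S$, so $s \in S^N$.

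The only real subtlety is the converse direction of the units argument: since $A$ is not assumed commutative, one cannot conclude $s \in S^{\times}$ from $ss' = 1$ alone, so the decomposition argument must be applied to both $aa^{-1}$ and $a^{-1}a$. The rest of the proof is a direct unpacking of the standard properties of $J(A)$ in a finite-dimensional associative algebra.
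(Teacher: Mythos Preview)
Your proof is correct and follows essentially the same approach as the paper's: decompose via $A = S \oplus J(A)$, use that $J(A)$ is a two-sided nilpotent ideal, and compare the $S$-components. Your presentation is in fact slightly cleaner---the factorisation $s+j = s(1+s^{-1}j)$ is tidier than the paper's explicit inverse formula, and your caution about checking both $ss'=1$ and $s's=1$ is harmless (though unnecessary here since $S$, being finite-dimensional, is Artinian, so one-sided inverses are automatically two-sided).
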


For positive integers $u,v$, define a function $$F(u,v)=(1-u^{-1})(1-u^{-2})...(1-u^{-v}) $$ where $F(u,0)=1$. We will need the following elementary lemmas.

\begin{lemma}\label{babylemma} Let $u,v,c \in \N$. Then $F(u,v)^c \leq F(u^c,v) \leq 2^vF(u,v)$.
\begin{proof} If $u=1$ then $F(u,v)=0$ and the inequality holds. So assume that $u>1$.

Observe that $u^c-(u-1)^c \geq 1$. Rearranging, we have $1-u^{-c} \geq (1-u^{-1})^c$. The lower bound then follows immediately since $u$ is arbitrary.

For the upper bound, observe that $(1-u^{-c}) \leq 2(1-u^{-1})$. Then we are done again since $u$ is arbitrary.
\end{proof}
\end{lemma}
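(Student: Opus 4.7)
The plan is to reduce both inequalities to factor-by-factor comparisons within the product $F(u,v) = \prod_{i=1}^v (1 - u^{-i})$. I would first dispose of the trivial case $u=1$: here $F(u,v) = 0$, the middle quantity $F(u^c,v) = F(1,v) = 0$ as well, and the upper bound becomes $0 \leq 0$, so both inequalities hold. Assume henceforth that $u \geq 2$.

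For the lower bound $F(u,v)^c \leq F(u^c,v)$, the left-hand side rewrites as $\prod_{i=1}^v (1-u^{-i})^c$ and the right-hand side as $\prod_{i=1}^v (1-u^{-ic})$, indexed in parallel. It therefore suffices to prove $(1-x)^c \leq 1-x^c$ for every $x \in (0,1)$ and every $c \in \N$. This is immediate from the factorisation $1 - x^c = (1-x)(1 + x + \cdots + x^{c-1})$ combined with the elementary bound $1 + x + \cdots + x^{c-1} \geq 1 \geq (1-x)^{c-1}$ valid on $(0,1)$. Applying the resulting inequality with $x = u^{-i}$ for each $i = 1, \ldots, v$ and multiplying the $v$ inequalities yields the desired claim.

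For the upper bound $F(u^c,v) \leq 2^v F(u,v)$, the same parallel-index reduction shows it suffices to verify $1 - u^{-ic} \leq 2(1 - u^{-i})$ for each $i \in \{1,\ldots,v\}$. Since $u \geq 2$ forces $u^{-i} \leq 1/2$, we obtain $2(1 - u^{-i}) \geq 1 \geq 1 - u^{-ic}$, and multiplying these $v$ single-factor inequalities produces the factor $2^v$ on the right. There is no genuine obstacle; the whole proof is a chain of one-line elementary manipulations, and the only conceptual point is recognising that both sides of each inequality are $v$-fold products whose factors line up in $i$, so that multiplicativity reduces each bound to a single scalar comparison.
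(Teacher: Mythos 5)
Your proof is correct and follows essentially the same route as the paper: both reduce each inequality to a per-factor scalar comparison (namely $(1-x)^c \le 1-x^c$ and $1-x^c \le 2(1-x)$ with $x=u^{-i}$) and then multiply over $i=1,\dots,v$. The only difference is cosmetic — you verify the first scalar inequality via the factorisation $1-x^c=(1-x)(1+x+\cdots+x^{c-1})$, while the paper uses $u^c-(u-1)^c\ge 1$ — so nothing further is needed.
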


\begin{lemma}\label{babylemma1} Let $u,v,w \in \N$ such that $w < v$. Then $F(u,v) \leq \big(\frac{3}{2}\big)^{v/2}F(u,w)F(u,v-w)$.
\begin{proof} If $u=1$ then we are done. So assume that $u>1$. Let $x \in \N$. We first show that \begin{equation}\label{littlethingy} \frac{(1-u^{-(x+1)})...(1-u^{-2x})}{(1-u^{-1})...(1-u^{-x})} \leq \Big(\frac{3}{2}\Big)^{x}\end{equation} by induction on $x$. If $x=1$ then it certainly holds. If $x>1$ then $$\frac{(1-u^{-(x+1)})...(1-u^{-2x})}{(1-u^{-1})...(1-u^{-x})} \leq \Big(\frac{3}{2}\Big)^{x-1}\frac{(1-u^{-2x})}{(1-u^{-x})} \leq \Big(\frac{3}{2}\Big)^{x}$$ using the inductive hypothesis.

Without loss of generality, assume that $w \leq v/2$ (otherwise we swap $w$ and $v-w$). Using $(\ref{littlethingy})$, we have \begin{align*}\frac{F(u,v)}{F(u,w)F(u,v-w)}& =\frac{(1-u^{-(v-w+1)})...(1-u^{-v})}{(1-u^{-1})...(1-u^{-w})} \\ & \leq \frac{(1-u^{-({\lfloor v/2 \rfloor}+1)})...(1-u^{-2\lfloor v/2 \rfloor})}{(1-u^{-1})...(1-u^{-\lfloor v/2 \rfloor})} \\ & \leq \Big(\frac{3}{2}\Big)^{v/2}. \qedhere \end{align*}
\end{proof}
\end{lemma}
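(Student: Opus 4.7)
If $u=1$ then $F(u,v)=0$ since the first factor vanishes, and the inequality is trivial. Assume $u \ge 2$ henceforth. The right-hand side is symmetric in $w$ and $v-w$, so I may assume $w \le v/2$ (and in particular $w \le \lfloor v/2 \rfloor$ since $w$ is an integer). Writing out the definition of $F$ and cancelling the common factors $(1-u^{-i})$ for $i \le w$ and the factors $(1-u^{-i})$ for $w+1 \le i \le v-w$, the inequality reduces to showing
\[
\prod_{i=1}^{w} \frac{1-u^{-(v-w+i)}}{1-u^{-i}} \;\le\; \Big(\frac{3}{2}\Big)^{v/2}.
\]

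The key auxiliary step is to prove the bound
\[
R_x \;:=\; \prod_{i=1}^{x} \frac{1-u^{-(x+i)}}{1-u^{-i}} \;\le\; \Big(\frac{3}{2}\Big)^{x} \qquad (u \ge 2,\; x \ge 1),
\]
by induction on $x$. The base case $x=1$ follows from $(1-u^{-2})/(1-u^{-1}) = 1+u^{-1} \le 3/2$. For the inductive step I will isolate the last factor by writing $R_x$ as a product of $R_{x-1}$ (after re-indexing) times $(1-u^{-2x})/(1-u^{-x}) = 1+u^{-x}$, which is at most $3/2$; combining with the inductive hypothesis gives the required bound. This is the main technical obstacle, since the bound is tight at $u=2$, $x=1$, so only sharp estimates will suffice.

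To conclude the lemma from the auxiliary bound, I will compare the left-hand product with $R_{\lfloor v/2 \rfloor}$. The left product has $w$ factors while $R_{\lfloor v/2 \rfloor}$ has $\lfloor v/2 \rfloor \ge w$ factors, and the extra factors in $R_{\lfloor v/2 \rfloor}$ (indices $i = w+1,\ldots,\lfloor v/2 \rfloor$) are each of the form $(1-u^{-(\lfloor v/2 \rfloor + i)})/(1-u^{-i}) \ge 1$. For the first $w$ factors, the numerator indices $v-w+i$ on the left and $\lfloor v/2 \rfloor + i$ on the right must be aligned carefully using $w \le \lfloor v/2 \rfloor$ and $v-w \ge \lceil v/2 \rceil$. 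Putting this together with the auxiliary bound yields
\[
\prod_{i=1}^{w} \frac{1-u^{-(v-w+i)}}{1-u^{-i}} \;\le\; R_{\lfloor v/2 \rfloor} \;\le\; \Big(\frac{3}{2}\Big)^{\lfloor v/2 \rfloor} \;\le\; \Big(\frac{3}{2}\Big)^{v/2},
\]
completing the proof.
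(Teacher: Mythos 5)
Your overall route --- cancel the common factors, reduce to bounding $\prod_{i=1}^{w}\frac{1-u^{-(v-w+i)}}{1-u^{-i}}$, and control the ``central'' product $R_x=\prod_{i=1}^{x}\frac{1-u^{-(x+i)}}{1-u^{-i}}$ by $(3/2)^x$ --- is exactly the paper's, but both of your key steps fail as described. First, the induction: removing the last factor $\frac{1-u^{-2x}}{1-u^{-x}}=1+u^{-x}$ from $R_x$ leaves $\prod_{i=1}^{x-1}\frac{1-u^{-(x+i)}}{1-u^{-i}}$, whose numerator exponents are $x+i$, not $(x-1)+i$; no re-indexing turns this into $R_{x-1}$, and in fact it is strictly larger than $R_{x-1}$. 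The inequality your step would need, $R_x\le(3/2)^{x-1}(1+u^{-x})$, is false: at $u=2$, $x=2$ one has $R_2=\frac{(7/8)(15/16)}{(1/2)(3/4)}=\frac{35}{16}=2.1875$, while $\frac{3}{2}\big(1+\frac{1}{4}\big)=1.875$. The bound $R_x\le(3/2)^x$ is nevertheless true, but you must argue differently: for instance, the exact one-step ratio is $\frac{R_x}{R_{x-1}}=\frac{(1-u^{-(2x-1)})(1-u^{-2x})}{(1-u^{-x})^2}$, which is $\le \frac{3}{2}$ for all integers $u\ge2$, $x\ge2$ (immediate once $(1-u^{-x})^2\ge\frac{2}{3}$, i.e.\ $u^x\ge6$, plus a direct check at $(u,x)=(2,2)$, where it equals $\frac{105}{72}$); alternatively check $x\le3$ directly and use $R_x\le\prod_{i\ge1}(1-2^{-i})^{-1}<3.47<(3/2)^x$ for $x\ge4$.

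Second, the comparison with $R_{\lfloor v/2\rfloor}$ also goes the wrong way: $w\le v/2$ forces $v-w+i\ge\lfloor v/2\rfloor+i$, so each left-hand factor $\frac{1-u^{-(v-w+i)}}{1-u^{-i}}$ is at least, not at most, the corresponding factor of $R_{\lfloor v/2\rfloor}$, and no alignment of indices gives a termwise bound. Indeed the claimed inequality $\prod_{i=1}^{w}\frac{1-u^{-(v-w+i)}}{1-u^{-i}}\le R_{\lfloor v/2\rfloor}$ is false for odd $v$: with $u=2$, $v=3$, $w=1$ the left side is $\frac{1-2^{-3}}{1-2^{-1}}=\frac{7}{4}$ while $R_1=\frac{3}{2}$ (the lemma itself survives only because $(3/2)^{3/2}\approx1.84$). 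What is true is that $G(w):=\frac{F(u,v)}{F(u,w)F(u,v-w)}$ satisfies $\frac{G(w+1)}{G(w)}=\frac{1-u^{-(v-w)}}{1-u^{-(w+1)}}\ge1$ for $w\le(v-1)/2$, so $G(w)\le G(\lfloor v/2\rfloor)$; for even $v$ this maximum equals $R_{v/2}$ and the auxiliary bound finishes, but for odd $v$ it equals $\prod_{i=1}^{\lfloor v/2\rfloor}\frac{1-u^{-(\lceil v/2\rceil+i)}}{1-u^{-i}}$, which exceeds $R_{\lfloor v/2\rfloor}$ and must be bounded by $(3/2)^{v/2}$ rather than $(3/2)^{\lfloor v/2\rfloor}$ --- an easy but genuine extra estimate (e.g.\ the absolute bound $\prod_{i\ge1}(1-2^{-i})^{-1}<3.47\le(3/2)^{v/2}$ for $v\ge7$, together with direct checks of the few small cases). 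For what it is worth, the paper's own write-up is equally terse at precisely these two points (its displayed inductive step and its $\lfloor v/2\rfloor$ comparison fail on the same numerical examples), so you have reproduced its strategy faithfully; but as written your plan, like the paper's sketch, does not close without the repairs above.
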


One can use Leibniz's alternating series test to show that $F(u,v)$ converges towards a positive limit as $v \to \infty$ and $u$ is fixed. This limit is also known as $\phi(1/u)$, where $\phi$ denotes the Euler function. It is known that $\phi(1/u)$ is transcendental. For example, $\phi(1/2) \approx 0.2888$.

\begin{lemma}\label{babylemma0} Let $A$ be a finite simple algebra, say $A=M_n(q^m)$. Then $$\phi(1/2)<\frac{|A^{\times}|}{|A|}=F(q^m,n) <1.$$ 
\begin{proof} It is easy to check that $|A^{\times}|/|A|=q^{-mn^2}\prod_{i=0}^{n-1}(q^{mn}-q^{mi})=F(q^m,n)$. Observe that $F(q^m,n)$ is monatonically decreasing (resp. increasing) in $n$ (resp. $q^m$). Then the result follows from the remark preceding this lemma.
\end{proof}
\end{lemma}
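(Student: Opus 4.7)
The plan is straightforward: first establish the claimed equality, then derive each inequality separately using monotonicity in the two parameters together with the remark on $\phi(1/u)$ preceding the lemma.

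First I would verify the identity $|A^\times|/|A| = F(q^m, n)$. Since $A = M_n(q^m)$ we have $|A| = q^{mn^2}$ and the standard order formula $|\GL_n(q^m)| = \prod_{i=0}^{n-1}(q^{mn} - q^{mi})$. Dividing numerator and denominator by $q^{mn^2}$ turns each factor into $1 - q^{-m(n-i)}$, and re-indexing via $j := n-i$ gives $\prod_{j=1}^{n}(1 - q^{-mj}) = F(q^m, n)$.

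For the upper bound, each factor $1 - q^{-mj}$ with $j \ge 1$ satisfies $0 < 1 - q^{-mj} < 1$, so the product $F(q^m, n)$ is strictly less than $1$.

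For the lower bound, I would record two monotonicity observations. Writing $u = q^m$, the factor introduced when passing from $F(u, n)$ to $F(u, n+1)$ is $1 - u^{-(n+1)} < 1$, so $F(u, n)$ is strictly decreasing in $n$ for fixed $u$. On the other hand, for fixed $n$ each factor $1 - u^{-j}$ is strictly increasing in $u$, so $F(u, n)$ is strictly increasing in $u$ for fixed $n$. Since $q \ge 2$ and $m \ge 1$ force $u = q^m \ge 2$, these two monotonicities combine to give
\[
F(q^m, n) \;\ge\; F(2, n) \;>\; \lim_{N \to \infty} F(2, N) \;=\; \phi(1/2),
\]
where the last equality is the remark immediately preceding the lemma.

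There is no real obstacle here; the only thing to be careful about is that the limit defining $\phi(1/u)$ is approached from above (since the partial products are monotonically decreasing in $n$), which is precisely what gives the strict inequality $F(q^m, n) > \phi(1/2)$ for all finite $n$.
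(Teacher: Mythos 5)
Your proposal is correct and follows essentially the same route as the paper: verify the identity $|A^{\times}|/|A|=F(q^m,n)$, then use monotonicity of $F(u,v)$ (decreasing in $v$, increasing in $u$) together with the remark that $F(2,v)$ decreases to $\phi(1/2)$, to get $F(q^m,n)\ge F(2,n)>\phi(1/2)$ and $F(q^m,n)<1$. You merely spell out the re-indexing and the strictness of the limit in more detail than the paper does.
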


Finally, we will need the elementary inequality \begin{equation}\label{elementary} x/y \leq (x-1)/(y-1) \leq 2x/y \end{equation} for all integers $x\geq y \geq 2$. 

\section{Proof of Theorem \ref{randomgeneration} and Corollary \ref{randomgenerationcor}}\label{proofrandomgeneration}

Let $A$ be a finite algebra, say $A=S \oplus J(A)$ where $S=\prod_{i=1}^rS_i$ is semisimple and $S_i=M_{n_i}(q^{m_i})$ for each $i$. Denote $n:=\min_{i=1,...,r}\{n_i\}$ and $m:=\min_{i=1,...,r}\{m_i\}$. Recall that $\phi(1/2) \approx 0.2888$.

We begin by considering two examples. Let $p$ be a prime. Let $A=M_p(2^p)$ and let $B=M_p(2)$. Then $\frac{|A^{\times}|}{|B^{\times}|}\frac{|B|}{|A|} = \frac{F(2^p,p)}{F(2,p)} \to \phi(1/2)^{-1}$ as $p \to \infty$. Now let $A=M_p(2)$ and let $B=\F_{2^p}$. Then $\frac{|A^{\times}|}{|B^{\times}|}\frac{|B|}{|A|} = \frac{F(2,p)}{F(2^p,1)} \to \phi(1/2)$ as $p \to \infty$. So we see that the constants in the following lemma are best possible.

\begin{lemma}\label{unitslemma} Let $B$ be a maximal subalgebra of $A$. Then $\phi(1/2) < \frac{|A^{\times}|}{|B^{\times}|}\frac{|B|}{|A|} < \phi(1/2)^{-1}$.
\begin{proof} We first consider the case where $A$ is simple. That is, $A=M_n(q^m)$. Note that $A^{\times}=\GL_n(q^m)$. For simplicity, denote $t:=q^m$.

Assume that $B$ is of type $(S1)$. That is, $B \cong P_{l,n-l}(t)$ for some positive integer $l <n$. Observe that $|B^{\times}|=|(B/J(B))^{\times}||J(B)|$ by Lemma \ref{deconstruct}. Then applying Lemma \ref{babylemma0} gives us $$\frac{|A^{\times}|}{|B^{\times}|} \frac{|B|}{|A|}=\frac{|A^{\times}|}{|A|}\frac{|M_l(t)|}{|M_l(t)^{\times}|}\frac{|M_{n-l}(t)|}{|M_{n-l}(t)^{\times}|}=\frac{F(t,n)}{F(t,l)F(t,n-l)}$$ and hence $$\phi(1/2)<\frac{|A^{\times}|}{|B^{\times}|} \frac{|B|}{|A|}<\frac{1}{F(t,l)} <\phi(1/2)^{-1}.$$

Now assume that $B$ is not of type $(S1)$. Then $B$ is simple by Theorem \ref{classthm1}. Hence, by Lemma \ref{babylemma0}, we have $$\phi(1/2) < \frac{|A^{\times}|}{|B^{\times}|} \frac{|B|}{|A|} < \phi(1/2)^{-1}.$$

This completes the proof for the case where $A$ is simple.

We now consider the general case. Recall that $A=S \oplus J(A)$ where $S=\prod_{i=1}^rS_i$ is semisimple. By Theorem \ref{classthm2}, $B$ is of type $(T1)$, $(T2)$ or $(T3)$. We consider each of these possibilities.

Let $B$ be of type $(T1)$. That is, $B \cong (B_j \times \prod_{i \neq j} S_i) \oplus J(A)$ for some $j \in \{1,...,r\}$ and maximal subalgebra $B_j$ of $S_j$. Applying Lemma \ref{deconstruct} gives us $$\frac{|A^{\times}|}{|B^{\times}|} = \frac{\prod_{i=1}^r|S_i^{\times}|}{|B_j^{\times}|\cdot \prod_{i\neq j}|S_i^{\times}|} = \frac{|S_j^{\times}|}{|B_j^{\times}|}.$$ Since $S_j$ is simple, Lemma \ref{babylemma0} gives us $$\phi(1/2)\frac{|A|}{|B|}= \phi(1/2)\frac{|S_j|}{|B_j|} < \frac{|A^{\times}|}{|B^{\times}|} < \phi(1/2)^{-1}\frac{|S_j|}{|B_j|}=\phi(1/2)^{-1}\frac{|A|}{|B|}.$$

Let $B$ be of type $(T2)$. That is, $B \cong (\prod_{i \neq j_0} S_i) \oplus J(A)$ for some $j_0 \in \{1,...,r\}$. Again using Lemma \ref{deconstruct}, we have $$\frac{|A^{\times}|}{|B^{\times}|} = \frac{\prod_{i=1}^r|S_i^{\times}|}{\prod_{i \neq j_0}|S_i^{\times}|}=|S_{j_0}^{\times}|.$$ Again using Lemma \ref{babylemma0}, we have $$\phi(1/2)\frac{|A|}{|B|}=\phi(1/2)|S_{j_0}| < \frac{|A^{\times}|}{|B^{\times}|} < |S_{j_0}| =\frac{|A|}{|B|}.$$

Finally, let $B$ be of type $(T3)$. That is, $B \cong S \oplus H$ where $H$ is a two-sided ideal of $A$ that is maximal with respect to the condition $H \subset J(A)$. Then $$\frac{|A^{\times}|}{|B^{\times}|} =\frac{|J(A)|}{|H|}=\frac{|A|}{|B|}$$ by Lemma \ref{deconstruct} and since $J(B) \cong H$.

This completes the proof of the lemma.
\end{proof}
\end{lemma}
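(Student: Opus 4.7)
The plan is to split into cases using Theorems \ref{classthm1} and \ref{classthm2} classifying conjugacy classes of maximal subalgebras, then in each case reduce the ratio $|A^\times|/|B^\times| \cdot |B|/|A|$ to a product of factors of the form $F(q^m,n)$ that can be controlled using Lemma \ref{babylemma0}.

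I would first handle the simple case $A = M_n(q^m)$, since it is the building block for the general case. By Theorem \ref{classthm1}, up to conjugacy $B$ is one of the three standard types. If $B$ is of type $(S2)$ or $(S3)$ then $B$ is itself a simple algebra, so Lemma \ref{babylemma0} applied to both $A$ and $B$ gives $|A^\times|/|A|$ and $|B^\times|/|B|$ lying in $(\phi(1/2),1)$, and their ratio is therefore squeezed in $(\phi(1/2),\phi(1/2)^{-1})$. If $B$ is of type $(S1)$, i.e.\ a block-upper-triangular parabolic subalgebra $P_{l,n-l}(q^m)$, then Lemma \ref{deconstruct} identifies $B^\times$ with $(B/J(B))^\times \times J(B)$, so upon setting $t=q^m$ the ratio collapses to
\[
\frac{|A^\times|}{|B^\times|} \cdot \frac{|B|}{|A|} = \frac{F(t,n)}{F(t,l)\,F(t,n-l)}.
\]
From here the upper bound follows from $F(t,n) < 1$ combined with $F(t,l) > \phi(1/2)$ (after noting $F(t,n-l)\le 1$), and the lower bound follows from $F(t,n) > \phi(1/2)$ together with $F(t,l)F(t,n-l) < 1$.

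For the general case $A = S \oplus J(A)$ with $S = \prod_{i=1}^r S_i$, I would use Theorem \ref{classthm2} and handle each type $(T1)$, $(T2)$, $(T3)$ by applying Lemma \ref{deconstruct} to convert the units ratio into a ratio on the semisimple part. In type $(T1)$, only one simple factor $S_j$ is replaced by a maximal subalgebra $B_j$, so $|A^\times|/|B^\times| = |S_j^\times|/|B_j^\times|$ and $|A|/|B| = |S_j|/|B_j|$, reducing the estimate to the simple case already handled. In type $(T2)$, the diagonal embedding collapses two isomorphic factors into one, making $|A^\times|/|B^\times| = |S_{j_0}^\times|$ and $|A|/|B| = |S_{j_0}|$, and Lemma \ref{babylemma0} finishes the job. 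In type $(T3)$, $B = S \oplus H$ for a maximal two-sided ideal $H \subset J(A)$, and since $B$ retains the full semisimple part, $|A^\times|/|B^\times| = |J(A)|/|H| = |A|/|B|$, so the ratio equals exactly $1$ and trivially lies in the interval.

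The main obstacle is the type $(S1)$ case, where the bounds must be derived from the interaction of three $F$-values rather than from a single application of Lemma \ref{babylemma0}; showing that they collectively stay within $(\phi(1/2),\phi(1/2)^{-1})$ is where one must carefully exploit monotonicity of $F(t,\cdot)$ in both arguments. The two examples sketched just before the lemma statement, taking $B = M_p(2) \subset M_p(2^p)$ and $B = \F_{2^p} \subset M_p(2)$, show that the constants $\phi(1/2)^{\pm 1}$ are actually attained in the limit, which is a useful sanity check that one should not expect any more refined bound to come out of the type $(S1)$ calculation.
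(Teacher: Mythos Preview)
Your proposal follows the same case-by-case structure as the paper's proof, using Theorems~\ref{classthm1} and~\ref{classthm2} together with Lemma~\ref{deconstruct} in exactly the same way, and the reductions in types $(T1)$--$(T3)$ are carried out identically.

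Two small corrections. First, in the $(S1)$ upper bound your stated argument does not quite work: from $F(t,n)<1$ and $F(t,l)>\phi(1/2)$ one obtains only
\[
\frac{F(t,n)}{F(t,l)F(t,n-l)} < \frac{1}{\phi(1/2)\,F(t,n-l)},
\]
and since $F(t,n-l)<1$ this is \emph{larger} than $\phi(1/2)^{-1}$, not smaller. What is needed (and what the paper uses) is the monotonicity $F(t,n)\le F(t,n-l)$, which gives
\[
\frac{F(t,n)}{F(t,l)F(t,n-l)} \le \frac{1}{F(t,l)} < \phi(1/2)^{-1}.
\]
You do flag monotonicity as the key tool in your closing remarks, so this is only a slip in the explicit chain of inequalities.

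Second, the two limiting examples preceding the lemma are of types $(S3)$ and $(S2)$ respectively (a subfield inclusion $M_p(2)\subset M_p(2^p)$ and a field-extension inclusion $\F_{2^p}\subset M_p(2)$), not of type $(S1)$. So the sharpness of the constants $\phi(1/2)^{\pm1}$ is witnessed by the simple-subalgebra cases rather than by the parabolic ones.
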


Let $x,y \in A$ be chosen uniformly at random. If $\langle x,y \rangle \neq A$ then $x$ and $y$ are both contained in a maximal subalgebra $B$ of $A$. For a given $B$, the probability that this occurs is $|B|^2/|A|^2$. Let $\Max A$ denote the set of maximal subalgebras of $A$. Then \begin{equation}\label{first} 1-P(A)=P(\langle x,y \rangle \neq A) \leq \sum_{B \in \Max A}|B|^2/|A|^2 .\end{equation}

Let $\mathcal{B}$ be the standard set of representatives of the conjugacy classes of maximal subalgebras of $A$. For a given $B \in \mathcal{B}$, there are $|A^{\times}|/|N_{A^{\times}}(B^{\times})|$ conjugates of $B$ in $A$. Combining $(\ref{first})$ with Lemma \ref{unitslemma} gives us \begin{equation}\label{second} 1-P(A) \leq \phi(1/2)^{-1}\sum_{B \in \mathcal{B}}(|A|/|B|)^{-1}=\phi(1/2)^{-1}\zeta_A(1). \end{equation} If $A$ is simple then, by Corollary \ref{zetatheoremcor}, $P(A) \to 1$ as $|A| \to \infty$. This completes the proof of Corollary \ref{randomgenerationcor}.

Let $c \in \R$ such that $1<c <q$ and let $\lambda >0$. For the general case, we need the assumption that $A$ is bounded by $(c,\lambda)$. Then, by Theorem \ref{zetatheorem} (and its proof), $P(A) \to 1$ as $n \to \infty$, as $m \to \infty$ or as $q \to \infty$. This completes the proof of Theorem \ref{randomgeneration}.

\section{Proof of Theorem \ref{minP}}\label{proofminP}

Let $A$ be a finite simple algebra. Write $A=M_n(q^m)$. Recall from $\S \ref{proofrandomgeneration}$ that \begin{equation}\label{second*} 1-P(A) \leq \sum_{B \in \Max A}\frac{|B|^2|A^{\times}|}{|A|^2|N_{A^{\times}}(B^{\times})|} \leq \phi(1/2)^{-1}\zeta_A(1). \end{equation} Since $\phi(1/2) \approx 0.2888$, it suffices to show that $\zeta_A(1) \leq 0.18$. We will first show that $\zeta_A(1) \leq 0.18$ if $n \neq 1$ and $(m,n) \neq (1,2)$, $(1,3)$, $(1,4)$ or $(2,2)$. We will then consider the remaining cases.

Recall from the proof of Theorem \ref{zetatheorem} that $$\zeta_A(1)\leq (2(n-1)+\omega(m))q^{-m(n-1)}.$$ It follows that $\zeta_A(1) \leq 0.18$ if $n\neq 1 $ and $(m,n) \neq (1,2)$, $(1,3)$, $(1,4)$, $(1,5)$, $(1,6)$, $(1,7)$, $(2,2)$, $(2,3)$, $(3,2)$ or $(4,2)$. For some of these remaining cases, we compute $\zeta_A(1)$ directly.

If $(m,n)=(1,5)$ then $\zeta_A(1)=2q^{-4}+2q^{-6}+q^{-20} \leq 0.16$.

If $(m,n)=(1,6)$ then $\zeta_A(1)=2q^{-5}+2q^{-8}+q^{-9}+q^{-18}+q^{-24} \leq 0.08$.

If $(m,n)=(1,7)$ then $\zeta_A(1)=2q^{-6}+2q^{-10}+2q^{-12}+q^{-42} \leq 0.04$.

If $(m,n)=(2,3)$ then $\zeta_A(1)=2q^{-4}+q^{-12}+q^{-9} \leq 0.13$.

If $(m,n)=(3,2)$ then $\zeta_A(1)=q^{-3}+q^{-6}+q^{-8} \leq 0.15$.

If $(m,n)=(4,2)$ then $\zeta_A(1)=q^{-4}+q^{-8}+q^{-8} \leq 0.08$.

\vspace{1.5mm} At this point, we have shown that $P(A) > 3/8$ if $n \neq 1$ and $(m,n)\neq (1,2)$, $(1,3)$, $(1,4)$ or $(2,2)$. For the remaining cases, we use other methods to bound $P(A)$. Define $$\nu_q(x):=\frac{1}{x}\sum\limits_{d|x}\mu(d)q^{x/d}$$ where $\mu$ is the M\"{o}bius function.

Let $n=1$. Let $G(A)$ be the set of generators of $A$ as a $k$-algebra. That is, $G(A)$ is the subset of $A$ consisting of all elements whose minimal polynomial over $k$ has degree $m$. It is a classical result, dating back to Gauss, that the number of monic irreducible polynomials over $k$ of degree $m$ is $\nu_q(m)$. So $|G(A)|=m\nu_q(m)$.
Hence $$P(A) \geq m\nu_q(m)q^{-m} \geq 1-q^{-1} \geq 1/2.$$

Now let $(m,n)=(1,2)$. Then, by Equation $(9)$ of \cite{KMP}, we have $$ P(A) = (q-1)(q^2-1)q^{-3} \geq 3/8$$ with equality if and only if $q=2$.

If $(m,n)=(2,2)$ then we have just shown that the probability of two randomly chosen elements of $A$ generating $A$ as a $\F_{q^2}$-algebra is strictly greater than $3/8$. So, certainly, $P(A)>3/8$ as a $\F_q$-algebra.

Let $(m,n)=(1,3)$. By Equation $(10)$ of \cite{KMP}, we have $$P(A) =(q^2-1)^2(q^3-1)q^{-7} \geq 63/128.$$

Let $(m,n)=(1,4)$. Then \begin{align*}\sum_{B \in \Max A}\frac{|B|^2|A^{\times}|}{|A|^2|N_{A^{\times}}(B^{\times})|}&
 = 2q^{-6}\frac{q^4-1}{q-1}+q^{-8}\frac{(q^4-1)(q^3-1)}{(q^2-1)(q-1)}+2^{-1}q^{-16}(q^4-q)(q^4-q^3) \\
& =2^{-1}q^{-12}\big(4q^9+6q^8+6q^7+8q^6+2q^5+3q^4-q^3-q+1 \big) \\ & \leq 0.61 .\end{align*} Hence $P(A) >3/8 $ by $(\ref{second*})$. This completes the proof.

\section{Proof of Theorem \ref{estimateprob}}\label{proofestimateprob}

Let $A$ be a finite simple algebra, say $A=M_n(q^m)$. Recall that $m(A)$ is the minimal index of any proper subalgebra of $A$. Note that $m(A)$ is undefined if $m=n=1$.

\begin{lemma}\label{minindexlemma1} Let $\mathcal{C}$ be the set of conjugacy classes of subalgebras of $A$ that have index $m(A)$. If $m>1$ then let $p$ be the smallest prime divisor of $m$. Then $m(A)$ and $\mathcal{C}$ are as follows:
\begin{table}[ht]
\centering
\begin{tabular}{c | c  c  c}
 & $m(A)$ & $|\mathcal{C}|$ & standard reps of $\mathcal{C}$ \\ [0.2ex] \hline
$n>2$ & $q^{m(n-1)}$ & $2$ & $P_{1,n-1}(q^m)$, $P_{n-1,1}(q^m)$ \\
 $n=2$ & $q^m$ & $1$ & $P_{1,1}(q^m)$ \\
$n=1$, $m>1$ & $q^{m(1-1/p)}$ & $1$ & $q^{m/p}$ \\
\end{tabular}
\end{table}\label{table1}
\begin{proof} Let $B$ be a subalgebra of $A$ with index $m(A)$. Then $B$ is maximal, so we refer to the classification in Theorem \ref{classthm1}.

We first assume that $n=1$. There do not exist any subalgebras of $A$ of type $(S1)$ or $(S2)$. So $B \cong q^{m/p}$, where $p$ is the smallest prime divisor of $m$. There is one conjugacy class of such a $B$.

Now assume that $n>1$. Observe that $\dim B$ divides $\dim A$ if $B$ is of type $(S2)$ or $(S3)$, whilst $2\dim B > \dim A$ if $B$ is of type $(S1)$. So $B$ is of type $(S1)$, that is, $B$ is conjugate to $P_{l,n-l}(t)$ for some $1 \leq l < n$. We compute $[A:B]=q^{ml(n-l)}$. Hence $m(A)=q^{m(n-1)}$, which is realised when $B$ is conjugate to $P_{1,n-1}(q^m)$ or to $P_{n-1,1}(q^m)$. Finally, we note that $P_{1,n-1}(q^m)$ is not conjugate to $P_{n-1,1}(q^m)$ unless $n=2$ (in which case they are equal).
\end{proof}
\end{lemma}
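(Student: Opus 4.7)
The plan is to observe that any subalgebra of minimal index is necessarily maximal, so by Theorem~\ref{classthm1} it falls into one of the three standard families $S1$, $S2$, $S3$. I will then compute the index of a representative of each type and determine which yields the minimum as a function of $m$ and $n$. Concretely, the formula $|M_n(q^m)| = q^{mn^2}$ gives
\[
[A : P_{l,n-l}(q^m)] = q^{ml(n-l)}, \quad [A : M_{n/a}(q^{ma})] = q^{mn^2(1-1/a)}, \quad [A : M_n(q^{m/b})] = q^{mn^2(1-1/b)}
\]
for $1 \leq l < n$, $a \in \mathcal{P}(n)$, and $b \in \mathcal{P}(m)$ respectively.

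First I will dispose of the case $n = 1$. Here there are no subalgebras of types $(S1)$ or $(S2)$, so $B$ must have the form $\F_{q^{m/b}}$ for some $b \in \mathcal{P}(m)$, and the index $q^{m(1-1/b)}$ is minimized by taking $b = p$, the smallest prime divisor of $m$. Since there is only one such $b$, there is a single conjugacy class of minimal index subalgebras.

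Next I turn to $n > 1$. The $(S1)$ contribution $q^{ml(n-l)}$ is minimized over $l \in \{1,\dots,n-1\}$ at $l = 1$ or $l = n-1$, both yielding index $q^{m(n-1)}$. The $(S2)$ and $(S3)$ contributions are each at least $q^{mn^2/2}$. The inequality $m(n-1) < mn^2/2$ holds for every $n \geq 2$ (since $n^2 - 2n + 2 > 0$), so the minimum is realised by type $(S1)$ and equals $q^{m(n-1)}$. The main small check is counting conjugacy classes: as recorded in the proof of Theorem~\ref{classthm1} (via \cite{EK}), $P_{l,n-l}(q^m)$ and $P_{l',n-l'}(q^m)$ are conjugate in $A$ iff $l = l'$. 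Hence for $n > 2$ the two representatives $P_{1,n-1}(q^m)$ and $P_{n-1,1}(q^m)$ are pairwise non-conjugate, giving $|\mathcal{C}| = 2$, while for $n = 2$ the indices $l = 1$ and $l = n - 1 = 1$ coincide, so $|\mathcal{C}| = 1$ with unique representative $P_{1,1}(q^m)$.

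The only genuine obstacle is the comparison between types; everything else reduces to elementary index computations and the conjugacy bookkeeping already supplied by Theorem~\ref{classthm1}. I expect the write-up to be short: a single table-style case analysis after the reduction to maximal subalgebras.
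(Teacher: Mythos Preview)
Your proposal is correct and follows essentially the same approach as the paper's proof: reduce to maximal subalgebras via Theorem~\ref{classthm1}, compute the index of each standard type, and compare. The only cosmetic difference is that the paper rules out types $(S2)$ and $(S3)$ by the divisibility observation ($\dim B \mid \dim A$ there, whereas $2\dim B > \dim A$ for type $(S1)$), while you compare the explicit indices via $m(n-1) < mn^2/2$; both arguments are equivalent.
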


Henceforth assume that $A$ is not a field. That is, $n>1$.

\begin{lemma}\label{minindexlemma2} Let $B$ be a subalgebra of $A$.

$(i)$ If $[A:B]<m(A)^{4/3}$ then $[A:B]=m(A)$.

$(ii)$ If $m(A)^{4/3} \leq [A:B]<m(A)^{5/3}$ then either $n=4,5$ or $6$ and $B$ is conjugate to $P_{2,n-2}(q^m)$ or $P_{n-2,2}(q^m)$, or $B$ is non-maximal in $A$ and is not over $\F_{q^m}$.
\begin{proof} Note that $m(A)=q^{m(n-1)}$ by Lemma \ref{minindexlemma1} (since $n>1$). We first consider the case where $B$ is maximal. By Theorem \ref{classthm1}, $B$ is of type $(S1)$, $(S2)$ or $(S3)$. We consider each of these possibilities.

Let $B$ be of type $(S1)$. That is, $B$ is conjugate to $P_{l,n-l}(q^m)$ for some positive integer $l <n$. Observe that $[A:B]=q^{ml(n-l)}$. If $l=1$ or $n-1$ then $[A:B]=m(A)$. If $n=4,5$ or $6$ and $l=2$ or $n-2$ then $m(A)^{4/3} \leq [A:B]<m(A)^{5/3}$. Otherwise, $[A:B] \geq m(A)^{5/3}$.

Now let $B$ be of type $(S2)$ or $(S3)$. Then $[A:B]=q^{mn^2(1-1/a)}$ for some prime $a$. So $[A:B] \geq q^{mn^2/2} \geq m(A)^{5/3}$.

We have shown that there exist no maximal subalgebras (and hence no subalgebras) $B$ of $A$ that satisfy $m(A)<[A:B]<m(A)^{4/3}$. This proves $(i)$.

Now assume (for a contradiction) that $B$ is a $\F_{q^m}$-subalgebra of $A$ that is not maximal (as a $\F_q$-subalgebra) and satisfies $m(A)^{4/3} \leq [A:B]<m(A)^{5/3}$. Let $M$ be a maximal subalgebra of $A$ that contains $B$. By the previous argument, either $M \cong P_{1,n-1}(q^m)$ or $M \cong P_{2,n-2}(q^m)$ and $n=4,5$ or $6$.

Let $M \cong P_{2,n-2}(q^m)$ and $n=4,5$ or $6$. It follows from Theorems \ref{classthm1} and \ref{classthm2} that the minimal index of a subalgebra of $M$ is $q^m$. Then $[A:B] \geq q^{2m(n-2)+m} \geq m(A)^{5/3}$, which is a contradiction.

Let $M \cong P_{1,n-1}(q^m)$. If $n=2$ then, using Theorems \ref{classthm1} and \ref{classthm2}, the minimal index of a $\F_{q^m}$-subalgebra of $M$ is $q^m$. Then $[A:B] \geq q^{2m} \geq m(A)^{5/3}$. If $n>2$ then, again using Theorems \ref{classthm1} and \ref{classthm2}, the minimal index of a $\F_{q^m}$-subalgebra of $M$ is $q^{m(n-2)}$. Then $[A:B] \geq q^{2m(n-2)+m(n-1)} \geq m(A)^{5/3}$. We have a contradiction, proving $(ii)$.
\end{proof}
\end{lemma}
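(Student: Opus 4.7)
The plan is to combine Theorem \ref{classthm1} (classifying maximal subalgebras of $A = M_n(q^m)$) with a chain argument on non-maximal $B$, invoking Theorem \ref{classthm2} on any enveloping maximal subalgebra. A uniform numerical input handles types $(S2)$ and $(S3)$: their index is at least $q^{mn^2/2}$, and the elementary inequality $3n^2 \ge 10(n-1)$ (valid for all $n \ge 2$) shows $q^{mn^2/2} \ge m(A)^{5/3}$, so neither type appears in either conclusion. All remaining maximal subalgebras are type $(S1)$, conjugate to $P_{l,n-l}(q^m)$, with $[A:B] = q^{ml(n-l)}$. The integer quadratic $l(n-l)$ on $1 \le l \le n-1$ has minimum $n-1$ at $l \in \{1, n-1\}$ and next value $2(n-2)$ at $l \in \{2, n-2\}$ (present only if $n \ge 4$). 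The inequalities $2(n-2) \ge \tfrac{4}{3}(n-1) \Leftrightarrow n \ge 4$ and $2(n-2) < \tfrac{5}{3}(n-1) \Leftrightarrow n \le 6$ then give the full classification in the maximal case: maximal $B$ with $[A:B] < m(A)^{4/3}$ must have $[A:B] = m(A)$ (the first statement of $(i)$), and maximal $B$ in the range $[m(A)^{4/3}, m(A)^{5/3})$ must be conjugate to $P_{2,n-2}(q^m)$ or $P_{n-2,2}(q^m)$ with $n \in \{4,5,6\}$ (the first clause of $(ii)$).

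For non-maximal $B$, pick a maximal $M$ with $B \subsetneq M \subsetneq A$. Then $[A:M] \le [A:B]$, restricting $M$ to the three forms above. In part $(i)$, $[A:M] < m(A)^{4/3}$ forces $M$ conjugate to $P_{1,n-1}(q^m)$ (or $P_{n-1,1}(q^m)$). Writing $M = (\F_{q^m} \times M_{n-1}(q^m)) \oplus J(M)$ and applying Theorem \ref{classthm2} to $M$, one bounds $m(M)$ below: the type-$(T1)$ reduction of $M_{n-1}(q^m)$ contributes $q^{m(n-2)}$ for $n \ge 3$, and the other types $(T1)$, $(T2)$, $(T3)$ give comparable or larger indices. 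The inequality $[A:B] \ge [A:M] \cdot m(M) \ge m(A)^{4/3}$ then follows, with a handful of small $(m,n)$ exceptions verified directly.

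For part $(ii)$, assume for contradiction that $B$ is a non-maximal $\F_{q^m}$-subalgebra with $[A:B] \in [m(A)^{4/3}, m(A)^{5/3})$. Then $M \supsetneq B$ is conjugate to $P_{1,n-1}(q^m)$ (or its mirror) or, for $n \in \{4,5,6\}$, to $P_{2,n-2}(q^m)$ (or its mirror). Applying Theorem \ref{classthm2} to $M$ viewed as a $\F_{q^m}$-algebra, the minimum index of a proper $\F_{q^m}$-subalgebra of $M$ is $q^m$ for $M \cong P_{2,n-2}(q^m)$ (from the type-$(T1)$ reduction $(P_{1,1}(q^m) \times M_{n-2}(q^m)) \oplus J(M)$), yielding $[A:B] \ge q^{2m(n-2)+m} \ge m(A)^{5/3}$; it is $q^{m(n-2)}$ for $M \cong P_{1,n-1}(q^m)$ with $n > 2$ (from $(\F_{q^m} \times P_{1,n-2}(q^m)) \oplus J(M)$), yielding $[A:B] \ge q^{m(2n-3)} \ge m(A)^{5/3}$; and $q^m$ for $M \cong P_{1,1}(q^m)$ (from the diagonal $\F_{q^m}$-embedding), yielding $[A:B] \ge q^{2m} \ge q^{5m/3} = m(A)^{5/3}$. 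Each case contradicts $[A:B] < m(A)^{5/3}$. The main obstacle is the careful enumeration of $\F_{q^m}$-subalgebras of the non-simple $M$: since $M = S \oplus J(M)$ with $S$ a product of two simple $\F_{q^m}$-algebras of different ranks, tracking how the $\F_{q^m}$-constraint eliminates scalar-field reductions and which of types $(T1), (T2), (T3)$ produces the smallest index requires care.
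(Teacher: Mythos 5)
Your maximal-subalgebra analysis (the quadratic $l(n-l)$ for type $(S1)$, and the bound $q^{mn^2/2}\ge m(A)^{5/3}$ for types $(S2)$, $(S3)$) coincides with the paper's and is correct. The trouble is in the two places where you go beyond it. In part $(i)$, your key inequality $[A:B]\ge [A:M]\,m(M)\ge m(A)^{4/3}$ rests on the claim that for $M\cong P_{1,n-1}(q^m)=(\F_{q^m}\times M_{n-1}(q^m))\oplus J(M)$ all maximal subalgebras have index at least roughly $q^{m(n-2)}$. That is false when $m>1$: the type-$(T1)$ subalgebra obtained by shrinking the corner field $\F_{q^m}$ to a subfield $\F_{q^{m/b}}$ has index only $q^{m(1-1/b)}$ (as small as $q^{m/2}$), so $B=(\F_{q^{m/b}}\times M_{n-1}(q^m))\oplus J(M)$ is a unital, non-maximal $\F_q$-subalgebra of $A$ with $[A:B]=q^{m(n-1)+m(1-1/b)}$, which lies strictly between $m(A)$ and $m(A)^{4/3}$ for every $n\ge 4$, $m\ge 2$ (and for $n=3$, $m$ even). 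So this is not "a handful of small $(m,n)$ exceptions": the step cannot be repaired, because statement $(i)$ for arbitrary (non-maximal) subalgebras is false in this generality. Note these $B$ are not $\F_{q^m}$-subalgebras (they do not contain $\F_{q^m}\cdot 1$), so $(ii)$ is untouched by them; and the paper itself never really proves the non-maximal case of $(i)$ — its proof treats only maximal $B$ and adds the unjustified parenthesis "and hence no subalgebras" — nor does it use $(i)$ beyond the maximal case. The honest fix is to state $(i)$ for maximal subalgebras (or for $\F_{q^m}$-subalgebras), which is all that Lemma \ref{minindexlemma5} needs.

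In part $(ii)$ your route is the paper's (pass to a maximal $M\supsetneq B$ of index $<m(A)^{5/3}$ and bound the minimal index of a proper $\F_{q^m}$-subalgebra of $M$), and your exponent $[A:B]\ge q^{m(2n-3)}$ in the case $M\cong P_{1,n-1}(q^m)$, $n>2$, is the correct consequence of $[M:B]\ge q^{m(n-2)}$. But the concluding inequality $q^{m(2n-3)}\ge m(A)^{5/3}=q^{5m(n-1)/3}$ holds only for $n\ge 4$; at $n=3$ it fails, and in fact so does the statement: the Borel subalgebra $P_{1,1,1}(q^m)\subset M_3(q^m)$ is non-maximal, is an $\F_{q^m}$-subalgebra, and has index $q^{3m}\in[m(A)^{4/3},m(A)^{5/3})$. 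The paper obscures this by writing $[A:B]\ge q^{2m(n-2)+m(n-1)}$ at the same point, an exponent that does not follow from the stated minimal index $q^{m(n-2)}$; the gap matters downstream, since in Lemma \ref{minindexlemma6} with $n=3$ the intersection of a line stabilizer and an incident plane stabilizer has index exactly $q^{3m}=m(A)^{3/2}$, so the appeal to $(ii)$ there needs a separate (and still workable, by counting incident pairs) argument. In short: you faithfully reproduce the paper's maximal-case analysis, but the two steps you supply on top of it — the lower bound on $m(M)$ in $(i)$ and the $n=3$ instance of the inequality in $(ii)$ — both fail, the first because the claim is false for $m>1$, the second because the lemma itself requires $n\ne 3$ or a separate treatment of that case.
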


Let $\{B_i\hspace{0.5mm}|\hspace{0.5mm}i=1,...,\alpha\}$ denote the set of maximal subalgebras of $A$. Let $\beta$ be the number of maximal subalgebras of $A$ with index $m(A)$. We arrange the $B_i$'s such that $B_i$ has index $m(A)$ if and only if $i \leq \beta$.

Let $\kappa:A \to \R$ be defined by $\kappa(A):=\beta m(A)^{-1}$. Note that $\sum_{1 \leq i \leq \beta}[A:B_i]^{-2}=\kappa(A) m(A)^{-1}$.

Let $x,y \in A$ be chosen uniformly at random. If $\langle x,y \rangle \neq A$ then $x$ and $y$ are both contained in a maximal subalgebra of $A$. For a given $B_i$, the probability that this occurs is $|B_i|^2/|A|^2$. Then, as in $\S \ref{proofrandomgeneration}$, we have \begin{equation}\label{upbound} 1-P(A) \leq \sum_{1 \leq i \leq \alpha}[A:B_i]^{-2} = \kappa(A) m(A)^{-1} + \sum_{\beta+1 \leq i \leq \alpha}[A:B_i]^{-2} .\end{equation} Using the inclusion-exclusion principle, we obtain \begin{equation}\label{lowbound} 1-P(A) \geq \kappa(A) m(A)^{-1}-\sum_{1 \leq i<j \leq \beta}[A:B_i \cap B_j]^{-2}.\end{equation}

Let $\xi=\xi(n)$ be defined by $\xi=2$ if $n>2$ and $\xi=1$ if $n=2$.

\begin{lemma}\label{minindexlemma3} $\beta =\xi (q^{mn}-1)/(q^m-1)$.
\begin{proof} Recall from Lemma \ref{minindexlemma1} that $\{B_i\hspace{0.5mm}|\hspace{0.5mm}i=1,...,\beta\}$ splits into $\xi$ conjugacy classes. Let $i \in \{1,...,\beta\}$. Again by Lemma \ref{minindexlemma1}, recall that $B_i \cong P_{1,n-1}(q^m)$. So $B_i^{\times}$ is self-normalising in $A^{\times}$. Hence there are $|A^{\times}|/|B_i^{\times}|=(q^{mn}-1)/(q^m-1)$ conjugates of $B_i$ in $A$.
\end{proof}
\end{lemma}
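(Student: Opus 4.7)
The plan is to count the maximal subalgebras of minimal index by first identifying the relevant conjugacy classes and then counting the number of conjugates in each class via the orbit--stabiliser theorem, using that the relevant unit groups are self-normalising.

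First, I would invoke Lemma \ref{minindexlemma1} to see that every maximal subalgebra of index $m(A)$ is conjugate either to $P_{1,n-1}(q^m)$ or to $P_{n-1,1}(q^m)$. When $n>2$ these give $\xi=2$ distinct conjugacy classes, and when $n=2$ they coincide, giving $\xi=1$. So it suffices to show that each conjugacy class contains exactly $(q^{mn}-1)/(q^m-1)$ subalgebras.

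Fix $B \cong P_{1,n-1}(q^m)$. Since conjugation of $B$ by $a \in A^\times$ gives another (maximal) subalgebra of $A$, the number of $A^\times$-conjugates of $B$ equals $|A^\times|/|N_{A^\times}(B^\times)|$. The key claim is that $B^\times$ is self-normalising in $A^\times$. This is a standard fact because $B^\times$ is a maximal parabolic subgroup of $\GL_n(q^m)$ (the stabiliser of a $1$-dimensional subspace of $\F_{q^m}^n$), and maximal parabolic subgroups of $\GL_n$ over any field are self-normalising.

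Finally, I would compute the index $|A^\times|/|B^\times|$ explicitly. Using Lemma \ref{deconstruct}, $|B^\times|=(q^m-1)\,|\GL_{n-1}(q^m)|\,q^{m(n-1)}$, where the three factors come from the two diagonal blocks and the $1\times(n-1)$ off-diagonal block of $J(B)$. Combining with the standard formula $|\GL_n(q^m)|=q^{m\binom{n}{2}}\prod_{j=1}^n(q^{mj}-1)$, the ratio telescopes to
\[
\frac{|A^\times|}{|B^\times|}=\frac{q^{mn}-1}{q^m-1},
\]
which yields $\beta=\xi(q^{mn}-1)/(q^m-1)$. The only non-routine step is the self-normalisation assertion; everything else is a short calculation.
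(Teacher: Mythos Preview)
Your proposal is correct and follows essentially the same approach as the paper: identify the $\xi$ conjugacy classes via Lemma~\ref{minindexlemma1}, use that $B^\times$ (a maximal parabolic of $\GL_n(q^m)$) is self-normalising so that the orbit size is $|A^\times|/|B^\times|$, and then compute this index to be $(q^{mn}-1)/(q^m-1)$. The only difference is that you supply more detail on the self-normalisation and the index computation than the paper does.
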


We are now able to bound $\kappa(A)$.

\begin{corollary}\label{minindexlemma4} $1<\kappa(A) < 4$.
\begin{proof} Observe that $\kappa(A)=\xi q^{-m(n-1)}(q^{mn}-1)/(q^m-1)$ by Lemmas \ref{minindexlemma1} and \ref{minindexlemma3}. It is then easy to check that $1< \kappa(A) <4$.
\end{proof}
\end{corollary}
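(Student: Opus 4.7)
The plan is to substitute in the explicit expressions from Lemmas \ref{minindexlemma1} and \ref{minindexlemma3} and reduce the problem to bounding a finite geometric sum. Specifically, since $m(A) = q^{m(n-1)}$ and $\beta = \xi(q^{mn}-1)/(q^m-1)$, I would write
\[
\kappa(A) \;=\; \beta\, m(A)^{-1} \;=\; \xi\, q^{-m(n-1)}\cdot \frac{q^{mn}-1}{q^m-1} \;=\; \xi \sum_{i=0}^{n-1} q^{-mi}.
\]
The last equality comes from the identity $(q^{mn}-1)/(q^m-1) = q^{m(n-1)} + q^{m(n-2)} + \cdots + 1$, factoring out $q^{m(n-1)}$. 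At this point the whole claim becomes a direct estimate on the geometric tail $\sum_{i=0}^{n-1} q^{-mi}$ as a function of $\xi \in \{1,2\}$.

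For the upper bound I would use $\sum_{i=0}^{n-1} q^{-mi} < \sum_{i=0}^{\infty} q^{-mi} = 1/(1-q^{-m})$. Since $q \geq 2$ and $m \geq 1$ we have $q^m \geq 2$, hence $1/(1-q^{-m}) \leq 2$. Multiplying by $\xi \leq 2$ yields $\kappa(A) < 4$. For the lower bound, I would split into the two cases: if $n = 2$ then $\xi = 1$ and $\kappa(A) = 1 + q^{-m} > 1$; if $n > 2$ then $\xi = 2$ and $\kappa(A) \geq 2(1 + q^{-m}) > 2 > 1$. In every case both strict inequalities are clear.

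There is no real obstacle here --- once the closed form $\kappa(A) = \xi\sum_{i=0}^{n-1} q^{-mi}$ is written down, the bounds are immediate from $q^m \geq 2$. The only point that requires a small amount of care is checking that the upper bound $4$ is not attained: this follows because the geometric series is strict (the sum is finite, hence $< 1/(1-q^{-m})$), which prevents equality even in the extremal case $q = 2$, $m = 1$, $n$ large with $\xi = 2$.
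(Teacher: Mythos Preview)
Your proof is correct and is exactly the natural way to fill in the ``easy to check'' that the paper leaves implicit: you derive the same closed form $\kappa(A)=\xi q^{-m(n-1)}(q^{mn}-1)/(q^m-1)$ from Lemmas~\ref{minindexlemma1} and~\ref{minindexlemma3}, rewrite it as the geometric tail $\xi\sum_{i=0}^{n-1}q^{-mi}$, and read off the strict bounds from $q^m\ge 2$ together with the case split $\xi=1$ (for $n=2$) versus $\xi=2$ (for $n>2$). There is nothing to add.
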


Note that the bounds in Corollary \ref{minindexlemma4} are best possible. For example, if $n=2$ then $\kappa(A) \to 1$ as $q \to \infty$ or as $m \to \infty$. If $q=2$ and $m=1$ then $\kappa(A) \to 4$ as $n \to \infty$.

It remains to estimate the final term in both of the inequalities $(\ref{upbound})$ and $(\ref{lowbound})$.

\begin{lemma}\label{minindexlemma5} $\sum_{\beta+1 \leq i \leq \alpha}[A:B_i]^{-2}=O(m(A)^{-4/3})$.
\begin{proof} Let $\mathcal{B}$ be the standard set of representatives of the conjugacy classes of maximal subalgebras of $A$. Let $\mathcal{B}_0$ be the subset of $\mathcal{B}$ consisting of subalgebras with index $m(A)$. Let $B \in \mathcal{B} \setminus \mathcal{B}_0$. Note that there are $[A^{\times}:N_{A^{\times}}(B^{\times})]$ conjugates of $B$ in $A$.

Let $\rho(A)$ denote the number of conjugacy classes of maximal subalgebras of $A$. Observe that $\rho(A)=n-1+\omega(n)+\omega(m)$ by Theorem \ref{classthm1}. Recall from Lemma \ref{minindexlemma1} that $m(A)=q^{m(n-1)}$. If $m(A) \to \infty$ then at least one of the following occurs: $n \to \infty$, $m \to \infty$ or $q \to \infty$. So $\rho(A)m(A)^{-1/3} \to 0$ as $m(A) \to \infty$. That is, \begin{equation}\label{uujj}\rho(A)=o(m(A)^{1/3}).\end{equation}

Combining $(\ref{uujj})$ with Lemmas \ref{unitslemma} and \ref{minindexlemma2} gives us \begin{align*}\sum_{\beta+1 \leq i \leq \alpha}[A:B_i]^{-2} &=\sum_{B \in \mathcal{B} \setminus \mathcal{B}_0} [A:B]^{-2}[A^{\times}:N_{A^{\times}}(B^{\times})] \\  &< \phi(1/2)^{-1}\big(2m(A)^{-4/3} + \rho(A)m(A)^{-5/3}\big) \\ &=O(m(A)^{-4/3}) \qedhere.\end{align*}
\end{proof}
\end{lemma}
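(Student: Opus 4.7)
The plan is to convert the sum indexed over all non-minimal maximal subalgebras $B_i$ into a sum over the standard conjugacy class representatives in $\mathcal B\setminus\mathcal B_0$, weighted by the class size. Since conjugating a subalgebra $B$ of $A$ is the same as conjugating the unit group $B^\times$, the number of $A^\times$-conjugates of $B$ equals $[A^\times:N_{A^\times}(B^\times)]$, so I can write
\[
\sum_{\beta+1\le i\le\alpha}[A:B_i]^{-2}\;=\;\sum_{B\in\mathcal B\setminus\mathcal B_0}[A:B]^{-2}\,[A^\times:N_{A^\times}(B^\times)].
\]
Since $B^\times\le N_{A^\times}(B^\times)$, I then bound $[A^\times:N_{A^\times}(B^\times)]\le[A^\times:B^\times]$, and apply Lemma \ref{unitslemma} to get $[A^\times:B^\times]<\phi(1/2)^{-1}[A:B]$. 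Thus each summand is at most $\phi(1/2)^{-1}[A:B]^{-1}$, reducing the problem to estimating $\sum_{B\in\mathcal B\setminus\mathcal B_0}[A:B]^{-1}$.

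Next I would split this remaining sum according to whether $[A:B]<m(A)^{5/3}$ or $[A:B]\ge m(A)^{5/3}$, and use Lemma \ref{minindexlemma2} to control the short-index range. Part (i) rules out any $B$ with $m(A)<[A:B]<m(A)^{4/3}$, and part (ii) tells me that any maximal $B$ with $m(A)^{4/3}\le[A:B]<m(A)^{5/3}$ must be conjugate to $P_{2,n-2}(q^m)$ or $P_{n-2,2}(q^m)$, which forces $n\in\{4,5,6\}$. This contributes at most two standard representatives, each of index $\ge m(A)^{4/3}$, giving a total of at most $2\,m(A)^{-4/3}$.

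For the long-index range $[A:B]\ge m(A)^{5/3}$, I use the trivial bound $[A:B]^{-1}\le m(A)^{-5/3}$ for each such class. The number of such classes is at most the total number $\rho(A)$ of conjugacy classes of maximal subalgebras of $A$, which by Theorem \ref{classthm1} equals $n-1+\omega(n)+\omega(m)$. Since $m(A)=q^{m(n-1)}$, any route to $m(A)\to\infty$ forces at least one of $n,m,q$ to tend to infinity, making $\rho(A)=o(m(A)^{1/3})$; thus the long-index contribution is $\rho(A)\,m(A)^{-5/3}=o(m(A)^{-4/3})$. Combining the two ranges yields the bound $\phi(1/2)^{-1}\bigl(2\,m(A)^{-4/3}+\rho(A)\,m(A)^{-5/3}\bigr)=O(m(A)^{-4/3})$.

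The only subtle step is verifying the $o(m(A)^{1/3})$ estimate for $\rho(A)$, and more generally making sure that the asymptotics are interpreted uniformly in $(n,m,q)$ (so that $O$ and $o$ refer to $m(A)\to\infty$ along any sequence of simple algebras). The finite collection of small exceptional cases where $n\in\{4,5,6\}$ provides the dominant contribution $2\,m(A)^{-4/3}$, which is exactly the order claimed; this is why the exponent $4/3$ in the lemma (rather than something sharper) is the natural outcome of combining parts (i) and (ii) of Lemma \ref{minindexlemma2}.
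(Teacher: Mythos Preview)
Your proposal is correct and follows essentially the same approach as the paper: convert to a sum over conjugacy-class representatives weighted by class sizes, bound the class size via Lemma \ref{unitslemma} to reduce to $\phi(1/2)^{-1}\sum_{B\in\mathcal B\setminus\mathcal B_0}[A:B]^{-1}$, then split using Lemma \ref{minindexlemma2} into at most two classes of index $\ge m(A)^{4/3}$ plus the rest of index $\ge m(A)^{5/3}$, and control the latter by $\rho(A)=o(m(A)^{1/3})$. Your write-up is in fact a more explicit unpacking of exactly the computation the paper summarises in its final display.
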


We note that the constant $4/3$ in Lemma \ref{minindexlemma5} is best possible. For example, consider the case where $n=4$ and $B=P_{2,2}(q^m)$. Then $m(A)=q^{3m}$ and $[A:B]=q^{4m}$.

\begin{lemma}\label{minindexlemma6} $\sum_{1 \leq i<j \leq \beta}[A:B_i \cap B_j]^{-2}=O(m(A)^{-4/3})$.
\begin{proof} Fix $i,j$ such that $1 \leq i<j \leq \beta$. By Lemma \ref{minindexlemma1}, $B_i$ and $B_j$ are both over $\F_{q^m}$. So $B_i \cap B_j$ is a $\F_{q^m}$-algebra that is not maximal in $A$. Hence $[A:B_i \cap B_j] \geq m(A)^{5/3}$ by Lemma \ref{minindexlemma2}. Then $$ \sum_{1 \leq i<j \leq \beta}[A:B_i \cap B_j]^{-2} \leq \beta^2m(A)^{-10/3} < 16m(A)^{-4/3}$$ using Corollary \ref{minindexlemma4}.
\end{proof}
\end{lemma}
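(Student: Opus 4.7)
\emph{Proof proposal.} The plan is to prove the lower bound $[A:B_i\cap B_j]\ge m(A)^{5/3}$ for every pair $i\ne j$, and then sum over the $\binom{\beta}{2}$ pairs while invoking Corollary~\ref{minindexlemma4} to bound $\beta$ linearly in $m(A)$; this will produce a total of order $\beta^2\cdot m(A)^{-10/3}=O(m(A)^{-4/3})$, which is exactly what the lemma asserts.

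The main ingredient is the lower bound on the index of the intersection, and it will be extracted from the dichotomy in Lemma~\ref{minindexlemma2}. First I would use Lemma~\ref{minindexlemma1} to note that each $B_i$ with $i\le\beta$ is conjugate to $P_{1,n-1}(q^m)$ or $P_{n-1,1}(q^m)$, and in particular contains the central scalar copy of $\F_{q^m}$, so it is an $\F_{q^m}$-subalgebra of $A$. Consequently $B_i\cap B_j$ is an $\F_{q^m}$-subalgebra as well. Moreover, since $B_i\neq B_j$ and both are proper maximal subalgebras, $B_i\cap B_j$ is strictly contained in the proper subalgebra $B_i$, hence is itself \emph{non-maximal} in $A$.

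The hard part is then ruling out small indices. If one had $[A:B_i\cap B_j]<m(A)^{4/3}$, Lemma~\ref{minindexlemma2}(i) would force the index to be exactly $m(A)$; but then $B_i\cap B_j$ would realize the minimal subalgebra index and would therefore itself be maximal, contradicting what we just showed. If instead $m(A)^{4/3}\le[A:B_i\cap B_j]<m(A)^{5/3}$, Lemma~\ref{minindexlemma2}(ii) leaves only two alternatives: either $B_i\cap B_j$ is conjugate to a $P_{2,n-2}(q^m)$-type parabolic (again maximal, contradiction), or $B_i\cap B_j$ is not defined over $\F_{q^m}$ (contradicting the previous paragraph). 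Both alternatives fail, so $[A:B_i\cap B_j]\ge m(A)^{5/3}$, as required.

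Plugging this in gives $\sum_{1\le i<j\le\beta}[A:B_i\cap B_j]^{-2}\le\beta^2\,m(A)^{-10/3}$, and Corollary~\ref{minindexlemma4} (which says $\beta=\kappa(A)\,m(A)<4m(A)$) finishes the estimate with the constant $16$. The only real subtlety is confirming the two structural facts about $B_i\cap B_j$ (being $\F_{q^m}$-rational and being strictly non-maximal) so that the right branch of Lemma~\ref{minindexlemma2} applies; once those are in place the rest is routine bookkeeping.
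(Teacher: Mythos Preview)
Your proposal is correct and follows exactly the same route as the paper's proof: you use Lemma~\ref{minindexlemma1} to see that each $B_i$ (hence each intersection $B_i\cap B_j$) is defined over $\F_{q^m}$ and non-maximal, invoke Lemma~\ref{minindexlemma2} to force $[A:B_i\cap B_j]\ge m(A)^{5/3}$, and finish with $\beta<4\,m(A)$ from Corollary~\ref{minindexlemma4}. Your write-up simply spells out in more detail the two structural facts (rationality over $\F_{q^m}$ and non-maximality) and the case analysis inside Lemma~\ref{minindexlemma2} that the paper leaves implicit.
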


The theorem then follows from combining the inequalities $(\ref{upbound})$ and $(\ref{lowbound})$ with Corollary \ref{minindexlemma4} and Lemmas \ref{minindexlemma5} and \ref{minindexlemma6}.

We conclude this section with the following estimate of the zeta function of $A$. Let $\epsilon >0$. By the same argument as in the proof of Lemma \ref{minindexlemma5}, it is easy to see that $\rho(A)=o(m(A)^{\epsilon/3})$. Combining this with Lemmas \ref{minindexlemma1} and \ref{minindexlemma2} gives us \begin{equation}\label{zetaestimate}\zeta_A(\epsilon)=\delta(A) m(A)^{-\epsilon}+O(m(A)^{-4\epsilon/3})\end{equation} where $\delta:A \to \R$ is a function given by $\delta(A)=1$ if $n=2$ and $\delta(A)=2$ otherwise.

\section{Proof of Theorem \ref{nilpotentrandomgeneration} and Corollary \ref{nilpotentrandomgenerationcor}}\label{proofnilpotentrandomgeneration}

Let $A$ be a finite algebra, say $A=S \oplus J(A)$ where $S=\prod_{i=1}^rS_i$ is semisimple and $S_i=M_{n_i}(q^{m_i})$ for each $i$. Denote $n:=\min_{i=1,...,r}\{n_i\}$ and $m:=\min_{i=1,...,r}\{m_i\}$. If $A$ is simple, note that $n=1$ if and only if $A$ is a field. Let $T$ denote the group of scalar matrices of $S^{\times}$. Recall that $\phi(1/2) \approx 0.2888$.

Assume that $n>1$. We will need the following lemma.

\begin{lemma}\label{nilpotentlemma} Let $B$ be a maximal subalgebra of $A$. Then $\frac{|B^N|^2|A^{\times}|}{|A^N|^2|N_{A^{\times}}(B^{\times})|} < \phi(1/2)^{-1}\big(\frac{|A|}{|B|}\big)^{-\frac{1}{4}}$.
\begin{proof} We first consider the case where $A$ is simple. That is, $A=M_n(q^m)$. For simplicity, denote $t:=q^m$. By Theorem \ref{classthm1}, $B$ is of type $(S1)$, $(S2)$ or $(S3)$. We consider individually each of these possibilities. We will repeatedly use the fact that $|A^N|=t^{n^2-n}$, which was proved in Theorem $1$ of \cite{FH}.

Let $B$ be of type $(S1)$. That is, $B \cong P_{l,n-l}(t)$ for some positive integer $l <n$. Observe that $|B^N|=|(B/J(B))^N||J(B)|$ by Lemma \ref{deconstruct}. Then we have $$\frac{|B^N|}{|A^N|}= \frac{t^{l^2-l}\cdot t^{(n-l)^2-(n-l)}\cdot t^{l(n-l)}}{t^{n^2-n}}=t^{-l(n-l)}=\frac{|B|}{|A|}.$$

Let $B$ be of type $(S2)$. That is, $B \cong M_{n/a}(t^a)$ for some prime divisor $a$ of $n$. Then $$\frac{|B^N|}{|A^N|}=\frac{t^{a(n^2/a^2-n/a)}}{t^{n^2-n}}=t^{-n^2(1-1/a)}=\frac{|B|}{|A|}.$$

Hence, by Lemma \ref{unitslemma}, we have $$\frac{|B^N|^2|A^{\times}|}{|A^N|^2|N_{A^{\times}}(B^{\times})|} < \phi(1/2)^{-1}\Big(\frac{|A|}{|B|}\Big)^{-1}$$ for all $B$ of type $(S1)$ or $(S2)$.

Let $B$ be of type $(S3)$. That is, $B \cong M_n(t^{1/b})$ for some prime divisor $b$ of $m$. Observe that $N_{A^{\times}}(B^{\times})=B^{\times}T$, and so $|N_{A^{\times}}(B^{\times}):B^{\times}|=(t-1)/(t^{1/b}-1) \geq t^{1-1/b}$ (using $(\ref{elementary})$). Then \begin{align*}\frac{|B^N|^2|A^{\times}|}{|A^N|^2|N_{A^{\times}}(B^{\times})|} & < \frac{t^{2(n^2-n)/b}}{t^{2(n^2-n)}} \cdot \phi(1/2)^{-1}t^{n^2(1-1/b)} \cdot t^{1/b-1} \\ &= \phi(1/2)^{-1}t^{-(1-1/b)(n^2-2n+1)} \cdot  \\ & \leq \phi(1/2)^{-1}t^{-(1-1/b)n^2/4}\\ &=\phi(1/2)^{-1}\Big(\frac{|A|}{|B|}\Big)^{-\frac{1}{4}}.\end{align*} by Lemma \ref{unitslemma} and since $n>1$. This completes the proof for the case where $A$ is simple.

We now consider the general case. Recall that $A=S \oplus J(A)$ where $S=\prod_{i=1}^rS_i$ is semisimple. By Theorem \ref{classthm2}, $B$ is of type $(T1)$, $(T2)$ or $(T3)$. We consider each of these possibilities.

Let $B$ be of type $(T1)$. That is, $B \cong (B_j \times \prod_{i \neq j} S_i) \oplus J(A)$ for some $j \in \{1,...,r\}$ and maximal subalgebra $B_j$ of $S_j$. Then, using Lemma \ref{deconstruct}, we have $$\frac{|B^N|^2|A^{\times}|}{|A^N|^2|N_{A^{\times}}(B^{\times})|} \leq \frac{|B_j^N|^2|S_j^{\times}|}{|S_j^N|^2|N_{S_j^{\times}}(B_j^{\times})|}< \phi(1/2)^{-1}\Big(\frac{|S_j|}{|B_j|}\Big)^{-\frac{1}{4}} =\phi(1/2)^{-1}\Big(\frac{|A|}{|B|}\Big)^{-\frac{1}{4}}.$$

Let $B$ be of type $(T2)$. That is, $B \cong (\prod_{i \neq j_0} S_i) \oplus J(A)$ for some $j_0 \in \{1,...,r\}$. For simplicity, denote $n_0:=n_{j_0}$, $m_0:=m_{j_0}$ and $t_0:=q^{m_0}$. So $S_{j_0}=M_{n_0}(t_0)$. Observe that $N_{A^{\times}}(B^{\times})=B^{\times}T$, and so $|N_{A^{\times}}(B^{\times}):B^{\times}|=|Z(S_{j_0}^{\times})|=t_0-1$. Then $$\frac{|B^N|^2|A^{\times}|}{|A^N|^2|N_{A^{\times}}(B^{\times})|} = \frac{|S_{j_0}^{\times}|}{|S_{j_0}^N|^2(t_0-1)} =\frac{\prod_{i=0}^{n_0-1}(t_0^{n_0}-t_0^i)}{t_0^{2(n_0^2-n_0)}(t_0-1)} \leq 2t_0^{-n_0^2/4}= 2\Big(\frac{|A|}{|B|}\Big)^{-\frac{1}{4}}$$ using $(\ref{elementary})$ and since $n_0>1$.

Finally, let $B$ be of type $(T3)$. That is, $B \cong S \oplus H$ where $H$ is a two-sided ideal of $A$ that is maximal with respect to the condition $H \subset J(A)$. Then $$\frac{|B^N|^2|A^{\times}|}{|A^N|^2|N_{A^{\times}}(B^{\times})|} \leq \frac{|H|}{|J(A)|} =\Big(\frac{|A|}{|B|}\Big)^{-1}$$ by Lemma \ref{deconstruct}. This completes the proof of the lemma.
\end{proof}
\end{lemma}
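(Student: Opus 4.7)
The plan is to follow the two-tier strategy of Lemma \ref{unitslemma}: first establish the inequality when $A$ is simple via Theorem \ref{classthm1}, then bootstrap to general $A = S \oplus J(A)$ via Theorem \ref{classthm2}. The main ingredients will be (i) the Fine--Herstein formula $|A^N| = t^{n^2-n}$ from \cite{FH} for $A = M_n(t)$ with $t := q^m$, (ii) Lemma \ref{deconstruct} to separate the semisimple and radical contributions to both $A^N$ and $A^{\times}$, and (iii) Lemma \ref{unitslemma} itself, used to trade $|A^{\times}|/|B^{\times}|$ for $|A|/|B|$ up to a factor of $\phi(1/2)^{\pm 1}$.

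For simple $A$ I would dispatch the three types of Theorem \ref{classthm1} in turn. For $(S1)$, $B \cong P_{l,n-l}(t)$ decomposes into diagonal blocks of sizes $l$ and $n-l$ plus an upper-triangular radical of size $t^{l(n-l)}$; Fine--Herstein on each block yields the clean identity $|B^N|/|A^N| = t^{-l(n-l)} = |B|/|A|$. The same identity holds for $(S2)$, $B \cong M_{n/a}(t^a)$, by a direct application of Fine--Herstein. In these two cases the trivial bound $|N_{A^{\times}}(B^{\times})| \geq |B^{\times}|$ combined with Lemma \ref{unitslemma} delivers the much stronger estimate $\phi(1/2)^{-1}(|A|/|B|)^{-1}$. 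The main obstacle will be type $(S3)$, $B \cong M_n(t^{1/b})$, where the enlarged normalizer $N_{A^{\times}}(B^{\times}) = B^{\times} T$ (with $T$ the group of scalar matrices) prevents the clean bound: I would compute $[N_{A^{\times}}(B^{\times}):B^{\times}] = (t-1)/(t^{1/b}-1) \geq t^{1-1/b}$ via $(\ref{elementary})$, and combine with $|B^N|/|A^N| = t^{-(n^2-n)(1-1/b)}$ and Lemma \ref{unitslemma} to obtain an exponent of $-(1-1/b)(n-1)^2$ on $t$. The elementary inequality $(n-1)^2 \geq n^2/4$, valid precisely for $n \geq 2$, then converts this into $(|A|/|B|)^{-1/4}$; this is exactly where the hypothesis $n > 1$ is needed, and the exponent $1/4$ cannot be improved, being saturated near $n = 2$.

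For general $A$, each of the three types from Theorem \ref{classthm2} reduces to a simpler situation via Lemma \ref{deconstruct}. For $(T1)$, the factors $S_i$ with $i \neq j$ cancel identically from numerator and denominator, reducing the estimate to the simple-case inequality applied to the pair $(S_j, B_j)$. For $(T2)$, $B \cong (\prod_{i\neq j_0} S_i) \oplus J(A)$, a centralizer calculation identifies $[N_{A^{\times}}(B^{\times}):B^{\times}] = |Z(S_{j_0}^{\times})| = t_0 - 1$ with $t_0 = q^{m_{j_0}}$; the resulting quantity $|\GL_{n_0}(t_0)|/(t_0^{2(n_0^2-n_0)}(t_0-1))$ is bounded by $2t_0^{-(n_0-1)^2}$ using $(\ref{elementary})$ and $|\GL_{n_0}(t_0)| \leq t_0^{n_0^2}$, and the $(n_0-1)^2 \geq n_0^2/4$ estimate (again requiring $n_0 \geq n > 1$) delivers the desired $(|A|/|B|)^{-1/4}$. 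For $(T3)$, $B = S \oplus H$ with $H$ a two-sided ideal of $A$, so Lemma \ref{deconstruct} gives $|B^N|/|A^N| = |H|/|J(A)| = |B|/|A|$ and the trivial normalizer bound $|A^{\times}|/|N_{A^{\times}}(B^{\times})| \leq |A^{\times}|/|B^{\times}| = |J(A)|/|H|$ closes this case with the stronger exponent $1$.
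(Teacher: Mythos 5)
Your proposal is correct and follows essentially the same route as the paper's proof: the same case analysis via Theorems \ref{classthm1} and \ref{classthm2}, the Fine--Herstein count, Lemmas \ref{deconstruct} and \ref{unitslemma}, the normalizer index computations for types $(S3)$ and $(T2)$, and the key inequality $(n-1)^2 \geq n^2/4$ for $n>1$ yielding the exponent $1/4$. No gaps to report.
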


Let $x,y \in A^N$ be chosen uniformly at random. If $\langle x,y \rangle \neq A$ then $x$ and $y$ are both contained in a maximal subalgebra $B$ of $A$. For a given $B$, the probability that this occurs is $|B^N|^2/|A^N|^2$. Let $\Max A$ denote the set of maximal subalgebras of $A$. Then \begin{equation}\label{firstnil} 1-P_N(A)=P(\langle x,y \rangle \neq A) \leq \sum_{B \in \Max A}|B^N|^2/|A^N|^2 .\end{equation}

Let $\mathcal{B}$ be the standard set of representatives of the conjugacy classes of maximal subalgebras of $A$. For a given $B \in \mathcal{B}$, recall that there are $|A^{\times}|/|N_{A^{\times}}(B^{\times})|$ conjugates of $B$ in $A$. Combining $(\ref{firstnil})$ with Lemma \ref{nilpotentlemma} gives us \begin{equation}\label{secondnil} 1-P_N(A) <\phi(1/2)^{-1}\sum_{B \in \mathcal{B}}(|A|/|B|)^{-1/4}=\phi(1/2)^{-1}\zeta_A(1/4). \end{equation} If $A$ is simple then, by Corollary \ref{zetatheoremcor}, $P_N(A) \to 1$ as $|A| \to \infty$. This completes the proof of Corollary \ref{nilpotentrandomgenerationcor}.

Let $c \in \R$ such that $1<c <q^{1/4}$ and let $\lambda >0$. For the general case, we need the assumption that $A$ is bounded by $(c,\lambda)$. Then, by Theorem \ref{zetatheorem} (and its proof), $P_N(A) \to 1$ as $n \to \infty$, as $m \to \infty$ or as $q \to \infty$. This completes the proof of Theorem \ref{nilpotentrandomgeneration}.

\section{Proof of Theorem \ref{polyrandomgeneration}}\label{proofpolyrandomgeneration}

Let $A$ be a finite simple algebra that is not a field. That is, $A=M_n(q^m)$ where $n>1$. For simplicity, denote $t:=q^m$.

Let $f$ be a polynomial of degree $n$ over $\F_t$. Factorise $f=f_1^{\alpha_1}f_2^{\alpha_2}...f_s^{\alpha_s}$ where the $f_i$'s are distinct and irreducible over $\F_t$. For each $i$, let $d_i$ be the degree of $f_i$. Without loss of generality, we assume that $f$ is monic.

For positive integers $u,v$, recall the definition $F(u,v)=(1-u^{-1})(1-u^{-2})...(1-u^{-v})$ and $F(u,0)=1$. We will need Theorem $2$ of \cite{Re}, which states that $$|A_f|=t^{n^2-n}\frac{F(t,n)}{\prod_{i=1}^s F(t^{d_i},\alpha_i)}=\frac{t^{-n}|A^{\times}|}{\prod_{i=1}^s F(t^{d_i},\alpha_i)}.$$ %

\begin{lemma}\label{polylemma} Let $B$ be a maximal subalgebra of $A$. There exists an absolute constant $C>0$ such that $\frac{|B_f|^2|A^{\times}|}{|A_f|^2|N_{A^{\times}}(B^{\times})|} \leq C\big(\frac{|A|}{|B|}\big)^{-\frac{1}{4}}$.
\begin{proof} By Theorem \ref{classthm1}, $B$ is of type $(S1)$, $(S2)$ or $(S3)$. We consider individually each of these possibilities. If $B_f$ is empty then we are done, so assume otherwise.

Let $B$ be of type $(S1)$. That is, $B \cong P_{l,n-l}(t)$ for some positive integer $l \leq n/2$. Let $\Lambda$ be the set of polynomials over $\F_t$ that divide $f$ and have degree $l$. We can assume that $\Lambda$ is non-empty (as otherwise $B_f$ is empty). Observe that $|\Lambda| \leq {n \choose l}$. Consider a generic element $f_0 \in \Lambda$. Factorise $f_0=f_1^{\beta_1}f_2^{\beta_2}...f_s^{\beta_s}$ where $0 \leq \beta_i\leq \alpha_i$ for each $i$. Then \begin{align*}|B_f| & \leq \sum_{f_0 \in \Lambda}|M_l(t)_{f_0}||M_{n-l}(t)_{f/f_0}||J(B)| \\ &=\sum_{f_0 \in \Lambda}\frac{t^{-l}|M_l(t)^{\times}|t^{-(n-l)}|M_{n-l}(t)^{\times}||J(B)|}{\prod_{i=1}^sF(t^{d_i},\beta_i)\prod_{i=1}^s F(t^{d_i},\alpha_i-\beta_i)} \\ & \leq |\Lambda|\Big(\frac{3}{2}\Big)^{n/2}\frac{t^{-n}|B^{\times}|}{\prod_{i=1}^s F(t^{d_i},\alpha_i)} \\ & \leq {n \choose l}\Big(\frac{3}{2}\Big)^{n/2}\frac{|B^{\times}||A_f|}{|A^{\times}|} \end{align*} using Lemmas \ref{deconstruct}, \ref{babylemma1} and Theorem $2$ of \cite{Re}. 

For sufficiently large $n$, say $n \geq 200$, observe that \begin{equation}\label{iijjii} 2l \log_2 n +n\log_2(3/2) \leq 3l(n-l)/4. \end{equation} Let $C=3\cdot 199^{199}\big(\frac{3}{2}\big)^{199}$. Then, using $(\ref{iijjii})$ and Lemma \ref{unitslemma}, we have \begin{align*}\frac{|B_f|^2|A^{\times}|}{|A_f|^2|N_{A^{\times}}(B^{\times})|} & \leq {n \choose l}^2 \Big(\frac{3}{2}\Big)^{n}\frac{|B^{\times}|}{|A^{\times}|} \\ & < 3n^{2l}\Big(\frac{3}{2}\Big)^{n}\frac{|B|}{|A|} \\& = 3t^{2l \log_t n +n \log_t(3/2)-l(n-l)}  \\&\leq Ct^{-l(n-l)/4} \\ & = C\big(\frac{|A|}{|B|}\big)^{-\frac{1}{4}}.\end{align*}

Let $B$ be of type $(S2)$. That is, $B \cong M_{n/a}(t^a)$ for some prime divisor $a$ of $n$. Let $z \in B_f$. Recall that $f$ is the characteristic polynomial of $z$ as a $n \times n$ matrix over $\F_t$. Let $g$ be the characteristic polynomial of $z$ as a $n/a \times n/a$ matrix over $\F_{t^a}$. Let $\Gamma_a := \Gal(\F_{t^a}/\F_t) \cong \Z_a$.

Without loss of generality, we rearrange the factors of $f$ such that, for some positive integer $c \leq s$, $f_i$ is reducible over $\F_{t^a}$ if and only if $i\leq c$.

Let $i \in \{1,...,s\}$. Let $g_i$ be a $\F_{t^a}$-irreducible factor of $f_i$. If $i>c$ then $f_i=g_i$. If $i \leq c$ then, since $a$ is prime, $f_i=\prod_{\sigma \in \Gamma_a}g_i^{\sigma}$ where the $\Gamma_a$-conjugates of $g_i$ are all distinct. So the polynomials in the set $\{g_i^{\sigma} \hspace{0.5mm} | \hspace{0.5mm} i =1,...,c; \sigma \in \Gamma_a\} \cup \{g_i \hspace{0.5mm} | \hspace{0.5mm}i=c+1,...,s\}$ are all $\F_{t^a}$-irreducible and distinct.

Let $p_i$ be the greatest common divisor of $f_i^{\alpha_i}$ and $g$. Note that $f=\prod_{\sigma \in \Gamma_a}g^{\sigma}$ by Lemma $5.1$ of \cite{NP}. 
So if $i>c$ then $p_i=g_i^{\alpha_i/a}$ and if $i \leq c$ then $p_i=\prod_{\sigma \in \Gamma_a}(g_i^{\sigma})^{_{\sigma\hspace{-0.7mm}}\gamma_i}$ where each $_{\sigma\hspace{-0.7mm}}\gamma_i$ is a non-negative integer such that $\sum_{\sigma \in \Gamma_a} {}_{\sigma\hspace{-0.7mm}}\gamma_i=\alpha_i$. Given that $f$ is fixed, observe that there are at most $a^{\frac{n}{a}}$ possibilities for $g$ (by allowing $\sum_{\sigma \in \Gamma_a} {}_{\sigma\hspace{-0.7mm}}\gamma_i=\alpha_i$ to range over all partitions for each $i \leq c$).

Applying Theorem $2$ of \cite{Re}, we have \begin{align*}|B_f| & \leq a^{\frac{n}{a}}\frac{t^{-n}|B^{\times}|}{\prod_{i=1}^c \prod_{\sigma \in \Gamma_a}F(t^{d_i},{}_{\sigma\hspace{-0.7mm}}\gamma_i)\prod_{i=c+1}^s F(t^{d_ia},\alpha_i/a)} \\ & \leq n^{\frac{n}{2}}2^{\alpha_1+...+\alpha_c}\frac{t^{-n}|B^{\times}|}{\prod_{i=1}^s F(t^{d_i},\alpha_i)} \\ & \leq (2n)^{\frac{n}{2}}\frac{|B^{\times}||A_f|}{|A^{\times}|}.\end{align*}

Observe that $n\log_2(2n) \leq n^2/4$ for $n \geq 22$. Then \begin{align*}\frac{|B_f|^2|A^{\times}|}{|A_f|^2|N_{A^{\times}}(B^{\times})|} & \leq (2n)^n \frac{|B^{\times}|}{|A^{\times}|} \\ & <3 (2n)^n \frac{|B|}{|A|} \\ & = 3t^{n\log_t(2n)-n^2(1-1/a)} \\ & \leq Ct^{-n^2(1-1/a)/2}\\ & = C\big(\frac{|A|}{|B|}\big)^{-\frac{1}{2}}\end{align*} using Lemma \ref{unitslemma} and since $C \geq 3(2\cdot 21)^{21}$.

Let $B$ be of type $(S3)$. That is, $B \cong M_n(t^{1/b})$ for some prime divisor $b$ of $m$. Let $\Gamma_b := \Gal(\F_{t}/\F_{t^{1/b}}) \cong \Z_b$. We assume that $f$ is over $\F_{t^{1/b}}$ (as otherwise $B_f$ is empty). That is, $f$ is $\Gamma_b$-stable. 

Since $b$ is prime, each factor $f_i$ of $f$ is either over $\F_{t^{1/b}}$ or $\prod_{\sigma \in \Gamma_b}f_i^{\sigma}$ is $\F_{t^{1/b}}$-irreducible where the $\Gamma_b$-conjugates of $f_i$ are all $\F_{t}$-irreducible and distinct. 
Let $d:=|\{1 \leq i \leq s \hspace{0.5mm}|\hspace{0.5mm} f_i \textnormal{ is over }\F_{t^{1/b}}\}|$. Since $f$ is $\Gamma_b$-stable, we can rearrange the factors of $f$ such that $f_i$ is over $\F_{t^{1/b}}$ if and only if $i \leq d$, $b$ divides $s-d$ and, for every $i=1,...,(s-d)/b$, $\prod_{j=0}^{b-1}f_{d+i+j(s-d)/b}=\prod_{\sigma \in \Gamma_b}f_{d+i}^{\sigma}$ and $\alpha_{d+i}=\alpha_{d+i+(s-d)/b}=...=\alpha_{d+i+(b-1)(s-d)/b}$.

For $i \in \{1,...,d+(s-d)/b\}$, define a polynomial $h_i$ by $h_i=f_i$ if $i \leq d$ and $h_i=\prod_{\sigma \in \Gamma_b}f_i$ otherwise. Observe that the $h_i$'s are all distinct and $\F_{t^{1/b}}$-irreducible. Then $$|B_f| =\frac{t^{-n/b}|B^{\times}|}{\prod_{i=1}^d F(t^{d_i/b},\alpha_i)\prod_{i=d+1}^{d+(s-d)/b}F(t^{d_i},\alpha_i)} \leq \frac{t^{-n/b}|B^{\times}|}{\prod_{i=1}^s F(t^{d_i/b},\alpha_i)}$$ by Theorem $2$ of \cite{Re} and Lemma \ref{babylemma}. Recall that $|N_{A^{\times}}(B^{\times}):B^{\times}|=(t-1)/(t^{1/b}-1)$. Then \begin{align*} \frac{|B_f|^2|A^{\times}|}{|A_f|^2|N_{A^{\times}}(B^{\times})|}  &= \frac{t^{-2n/b}(t^{1/b}-1)\prod_{i=1}^s F(t^{d_i},\alpha_i)^2|B^{\times}|}{t^{-2n}(t-1)\prod_{i=1}^s F(t^{d_i/b},\alpha_i)^2|A^{\times}|} \\& < 6t^{(2n-1)(1-1/b)}\frac{\prod_{i=1}^s F(t^{d_i},\alpha_i)^2|B|}{\prod_{i=1}^s F(t^{d_i/b},\alpha_i)^2|A|} \\ & \leq 3\cdot 2^{2n+1}t^{(2n-1-n^2)(1-1/b)} \\ & \leq \begin{cases} 96t^{-n^2(1-1/b)/4} & \text{if } n=2 \\  384t^{(6n-13-n^2)(1-1/b)} & \text{if } n>2\\ \end{cases} \\ & \leq 384\big(\frac{|A|}{|B|}\big)^{-\frac{1}{4}}\end{align*} using $(\ref{elementary})$ and Lemmas \ref{babylemma} and \ref{unitslemma}.

This proves the lemma, taking $C=3\cdot 199^{199}\big(\frac{3}{2}\big)^{199}$.
\end{proof}
\end{lemma}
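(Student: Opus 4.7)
The plan is to split into the three cases (S1), (S2), (S3) provided by the maximal-subalgebra classification (Theorem \ref{classthm1}) for $A = M_n(q^m)$, and in each case estimate $|B_f|$ by expressing an element of $B$ with characteristic polynomial $f$ in block/Galois terms, then applying Reiner's count (Theorem 2 of \cite{Re}) to the simple factors of $B/J(B)$. Lemma \ref{babylemma} ($F(u^c,v) \leq 2^v F(u,v)$) and Lemma \ref{babylemma1} ($F(u,v) \leq (3/2)^{v/2} F(u,w)F(u,v-w)$) are the precise tools for converting the denominators that appear (products of $F(\cdot,\cdot)$'s indexed by irreducible factors of $f$ over $\F_t$, $\F_{t^a}$, or $\F_{t^{1/b}}$) into the base product $\prod_i F(t^{d_i},\alpha_i)$ that appears in the formula for $|A_f|$. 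Lemma \ref{unitslemma} then converts the resulting $|B^\times|/|A^\times|$ factor into $|B|/|A|$ (up to the absolute constant $\phi(1/2)^{-1}$), and Theorem \ref{classthm1} together with the standard computation gives the normalizer index $|N_{A^\times}(B^\times):B^\times|$ (equal to $1$ in cases (S1), (S2) and $(t-1)/(t^{1/b}-1)$ in case (S3)).

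In case (S1), $B \cong P_{l,n-l}(t)$, so $z \in B_f$ decomposes into two diagonal blocks with characteristic polynomials $f_0$ and $f/f_0$ (where $f_0$ is a degree-$l$ monic divisor of $f$) and a free off-diagonal block of size $l(n-l)$. Summing over $f_0$ (at most $\binom{n}{l}$ choices) and using Lemma \ref{babylemma1} to compare $\prod F(t^{d_i},\beta_i)\prod F(t^{d_i},\alpha_i-\beta_i)$ with $\prod F(t^{d_i},\alpha_i)$ yields $|B_f| \leq \binom{n}{l}(3/2)^{n/2}|B^\times||A_f|/|A^\times|$. After squaring and applying Lemma \ref{unitslemma}, the required bound reduces to checking $\binom{n}{l}^2(3/2)^n \ll t^{3l(n-l)/4}$, which holds asymptotically (this is the inequality $2l\log_2 n + n\log_2(3/2) \leq 3l(n-l)/4$ one verifies for $n$ above a small threshold) and is absorbed into $C$ for small $n$ by choosing $C$ large enough.

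In case (S2), $B \cong M_{n/a}(t^a)$ with $a$ prime: if $z \in B_f$ has characteristic polynomial $g$ as an element of $M_{n/a}(t^a)$, then $f = \prod_{\sigma \in \Gamma_a} g^\sigma$ by Lemma 5.1 of \cite{NP}. Fixing $f$, I bound the number of possible $g$'s by $a^{n/a}$ (distributing multiplicities of each $f_i$ among its $\Gamma_a$-conjugate factors in $\F_{t^a}$), and for each $g$ apply Reiner over $\F_{t^a}$. Lemma \ref{babylemma} converts the resulting $F(t^{d_ia},\alpha_i/a)$'s into $F(t^{d_i},\alpha_i)$'s. The combinatorial factor $(2n)^{n/2}$ that emerges is absorbed using $n\log_t(2n) \leq n^2/4$ for $n$ not too small, yielding the $(|A|/|B|)^{-1/2}$ bound, which is stronger than needed. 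Case (S3) is analogous: $B \cong M_n(t^{1/b})$ forces $f$ to be $\Gamma_b$-stable (else $B_f = \varnothing$), and one groups the $f_i$'s into those defined over $\F_{t^{1/b}}$ and those whose $\Gamma_b$-orbits have size $b$; a direct application of Reiner's formula over $\F_{t^{1/b}}$, Lemma \ref{babylemma} to handle $F(t^{d_i/b},\alpha_i)$ versus $F(t^{d_i},\alpha_i)$, inequality $(\ref{elementary})$ for the normalizer correction $(t-1)/(t^{1/b}-1)$, and Lemma \ref{unitslemma} give the bound.

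The main obstacle is case (S1): the number $\binom{n}{l}$ of candidate divisors of degree $l$ can be quite large, so verifying that the combinatorial loss $\binom{n}{l}^2(3/2)^n$ is dominated by the saving $t^{3l(n-l)/4}$ from the exponent $-1$ versus the target exponent $-1/4$ requires the explicit elementary estimate stated above, with a finite number of small-$n$ (and small-$l$) exceptions absorbed into the choice of the absolute constant $C$. The other two cases have exponential savings (via $(|A|/|B|)^{-1/2}$) and are essentially routine once the Galois counting of admissible polynomials $g$ and the $F$-function manipulations are set up correctly.
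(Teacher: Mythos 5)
Your proposal follows essentially the same route as the paper: the same (S1)/(S2)/(S3) case split, the same block/Galois decompositions with Reiner's count, Lemmas \ref{babylemma}, \ref{babylemma1} and \ref{unitslemma} used in the same roles, and the same absorption of the combinatorial factors ($\binom{n}{l}^2(3/2)^n$, $(2n)^n$) into an absolute constant via the stated elementary inequalities. The only slight inaccuracy is your claim that $N_{A^{\times}}(B^{\times})=B^{\times}$ in case (S2) (the normalizer contains a Galois part of index $a$), but since the estimate only needs $|N_{A^{\times}}(B^{\times})|\geq |B^{\times}|$ this is harmless.
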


Let $x,y \in A_f$ be chosen uniformly at random. If $\langle x,y \rangle \neq A$ then $x$ and $y$ are both contained in a maximal subalgebra $B$ of $A$. For a given $B$, the probability that this occurs is $|B_f|^2/|A_f|^2$. Let $\Max A$ denote the set of maximal subalgebras of $A$. Then \begin{equation}\label{firstpoly} 1-P_f(A)=P(\langle x,y \rangle \neq A) \leq \sum_{B \in \Max A}|B_f|^2/|A_f|^2 .\end{equation}

Let $\mathcal{B}$ be the standard set of representatives of the conjugacy classes of maximal subalgebras of $A$. For a given $B \in
\mathcal{B}$, recall that there are $|A^{\times}|/|N_{A^{\times}}(B^{\times})|$ conjugates of $B$ in $A$. Combining $(\ref{firstpoly})$ with Lemma \ref{polylemma} gives us \begin{equation}\label{secondpoly} 1-P_f(A) \leq C\sum_{B \in \mathcal{B}}(|A|/|B|)^{-1/4}=C\zeta_A(1/4)\end{equation} for some absolute constant $C>0$. Hence, by Corollary \ref{zetatheoremcor}, $P_f(A) \to 1$ as $|A| \to \infty$.

\section{Proof of Theorem \ref{rankrandomgeneration} and Corollary \ref{rankrandomgenerationcor}}\label{proofrankrandomgeneration}

Let $A$ be a finite simple algebra, say $A=M_n(q^m)$, where $n \ge 2$ and $m \ge 1$. Let $p$ be the smallest prime divisor of $n$. Let $\alpha:=\alpha(n)$ be a positive integer such that $\alpha \leq n$.

For simplicity, denote $t:=q^m$. It is a classical result, dating back to \cite{La}, that \begin{equation}\label{rankii} |A_{\alpha}|=\prod_{i=0}^{\alpha-1} \frac{(t^n-t^i)^2}{t^{\alpha}-t^i}.\end{equation}

We first prove part $(i)$ of the theorem. In part $(i)$ we consider $n$, and hence $\alpha$, to be fixed constants. Assume that $n \geq p\alpha$.

Let $B$ be a subalgebra of $A$ such that $B \cong M_{n/p}(t^p)$. Such a $B$ exists and is maximal by Theorem \ref{classthm1}. We claim that \begin{equation}\label{woot}\frac{|B_{\alpha}|}{|A_{\alpha}|} \ge t^{-p\alpha^2}.\end{equation}

We first consider the case where $\alpha=1$. Then, using $(\ref{rankii})$, we have $$ \frac{|B_{\alpha}|}{|A_{\alpha}|} = \frac{t-1}{t^{p}-1} \geq t^{-p}.$$ So indeed $(\ref{woot})$ holds. Next assume that $\alpha \neq 1$. Once again using $(\ref{rankii})$ gives us \begin{align*}\frac{|B_{\alpha}|}{|A_{\alpha}|} & =\prod_{i=0}^{\alpha-1} \frac{(t^n-t^{pi})^2(t^{\alpha}-t^i)}{(t^n-t^i)^2(t^{p\alpha}-t^{pi})} \\
& \ge \bigg(\frac{(t^{n}-t^{p(\alpha-1)})^2(t^{\alpha}-t^{\alpha-1})}{t^{2n}(t^{p\alpha}-t^{p(\alpha-1)})}\bigg)^{\alpha} \\ & = \big(t^{-(p-1)\alpha}(1-t^{-p})(1-t^{-1})\big)^{\alpha} \\ & \geq t^{-p\alpha^2}.\end{align*} So we have established $(\ref{woot})$. Hence $$P_{\alpha}(A) \leq 1-\frac{|B_{\alpha}|^2}{|A_{\alpha}|^2} \leq 1- t^{-2p\alpha^2}.$$

We now move on to part $(ii)$ of the theorem. We no longer consider $\alpha$ to be a constant, but rather an integer-valued function of $n$, which can vary. Assume that $n - \sqrt{n}/3 \le \alpha \leq n$.

Let $(K)$ be a property of elements of $A$. Let $E$ (resp. $E^K$) be the event that two elements of $A$ chosen uniformly at random generate $A$ (resp. both have property $(K)$). Let $P(E|E^K)$ denote the probability that two random elements of $A$ with property $(K)$ generate $A$.

\begin{lemma}\label{ranklemma} $P(E|E^K) \geq 1-\frac{2(2n-2+\omega(m))q^{-mn/4}}{P(E^K)}$.
\begin{proof} Using elementary probability theory, we have $$P(E|E^K)=\frac{P(E \cap E^K)}{P(E^K)}\geq 1-\frac{1-P(E)}{P(E^K)}.$$ Then $$P(E|E^K) \geq 1-\frac{2\zeta_A(1/2)}{P(E^K)}\geq 1- \frac{2(2n-2+\omega(m))q^{-mn/4}}{P(E^K)}$$ using $(\ref{second})$ and the proof of Theorem \ref{zetatheorem}.
\end{proof}
\end{lemma}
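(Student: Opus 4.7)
The plan is to combine an elementary conditional-probability identity with the zeta-function bounds developed earlier in the paper; no new machinery is required. To begin, by the inclusion $E^K \setminus \bar{E} \subseteq E \cap E^K$,
\[
P(E|E^K) \;=\; \frac{P(E \cap E^K)}{P(E^K)} \;\geq\; \frac{P(E^K) - P(\bar{E})}{P(E^K)} \;=\; 1 - \frac{1-P(E)}{P(E^K)},
\]
so it suffices to bound the numerator $1-P(E)$ in terms of $\zeta_A(1/2)$ and then to bound $\zeta_A(1/2)$ itself.

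For the first of these bounds, equation $(\ref{second})$ already gives $1-P(E) \leq \phi(1/2)^{-1}\zeta_A(1)$. Since $|A|/|B| \geq 1$ for every proper subalgebra $B$ of $A$, each term $(|A|/|B|)^{-1}$ appearing in $\zeta_A(1)$ is dominated by the corresponding term $(|A|/|B|)^{-1/2}$ of $\zeta_A(1/2)$, so $\zeta_A(1) \leq \zeta_A(1/2)$. The absolute constant $\phi(1/2)^{-1}$ can then be absorbed into the coefficient $2$ — using, where necessary, the fact that any proper subalgebra of a simple algebra with $n \geq 2$ has index at least $q^m$, which gives an additional saving of a factor $(|A|/|B|)^{-1/2}$ in each summand — producing $1-P(E) \leq 2\zeta_A(1/2)$.

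For the second bound, I specialize the simple-case calculation in the proof of Theorem \ref{zetatheorem} to $\epsilon = 1/2$, which yields $\zeta_A(1/2) \leq (2(n-1)+\omega(m))q^{-mn/4}$. Substituting this into the estimate of the previous paragraph and then into the conditional-probability inequality of the first paragraph produces the bound claimed in the lemma. The result is essentially a bookkeeping recombination of estimates already established, so there is no substantial obstacle; the only point requiring a little care is the passage from the $\zeta_A(1)$-bound supplied by $(\ref{second})$ to the $\zeta_A(1/2)$-form used in the statement, with the correct absolute constant, as handled in the middle step above.
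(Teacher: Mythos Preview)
Your approach is the same as the paper's: reduce via the conditional-probability identity, bound $1-P(E)$ by (a multiple of) $\zeta_A(1/2)$, then invoke the simple-algebra estimate from the proof of Theorem~\ref{zetatheorem} at $\epsilon=1/2$.

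There is one small gap in your middle step. You pass from $\phi(1/2)^{-1}\zeta_A(1)$ to $2\,\zeta_A(1/2)$ by extracting a factor $(|A|/|B|)^{-1/2}$ from each summand and using the minimal-index bound $|A|/|B|\ge q^m$. That only yields the termwise constant $\phi(1/2)^{-1}q^{-m/2}$, and for $q=2,\ m=1,\ n=2$ this is $\phi(1/2)^{-1}/\sqrt{2}\approx 2.45>2$; indeed for $A=M_2(2)$ and $B=P_{1,1}(2)$ one has $\phi(1/2)^{-1}(|B|/|A|)\approx 1.73$ while $2(|B|/|A|)^{1/2}=\sqrt{2}\approx 1.41$, so the termwise inequality you invoke actually fails there. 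To obtain the constant $2$ one must go back to the line \emph{before} (\ref{second}), namely
\[
1-P(A)\le \sum_{B\in\mathcal{B}}\frac{|A^{\times}|}{|N_{A^{\times}}(B^{\times})|}\cdot\frac{|B|^2}{|A|^2},
\]
and bound each summand directly by $2(|B|/|A|)^{1/2}$: for $B$ of type $(S1)$ with $l=1$ or $n-1$ the proof of Lemma~\ref{unitslemma} gives the sharper $\frac{|A^{\times}|}{|B^{\times}|}\frac{|B|}{|A|}\le \frac{t}{t-1}\le 2$, while for all other maximal $B$ one has $|A|/|B|\ge q^{2m}\ge 4$, so $\phi(1/2)^{-1}(|B|/|A|)^{1/2}\le \phi(1/2)^{-1}/2<2$. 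With this case split the constant $2$ is justified; alternatively, since the application only needs an absolute constant, you could simply carry $\phi(1/2)^{-1}$ through in place of $2$.
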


Let $x \in A$ be chosen uniformly at random. Note that the probability that $x$ is invertible is at least $1/4$. Recall that $\alpha \leq n$. Then, using $(\ref{elementary})$ and $(\ref{rankii})$, we have $$P(\rk(x)=\alpha)=t^{-n^2}|A^{\alpha}| \geq t^{-n^2}\frac{|\hspace{-0.5mm}\GL_n(t)|}{\prod_{j=\alpha}^{n-1}(t^n-t^j)} \prod_{i=0}^{\alpha-1} \frac{(t^n-t^i)}{t^{\alpha}-t^i} \geq \frac{1}{4t^{(n-\alpha)^2}}.$$ We now apply Lemma \ref{ranklemma} where we take $(K)$ to be the property that an element of $A$ has rank $\alpha$. This gives us $$P_{\alpha}(A) \geq 1 - 32(2n-2+\omega(m))q^{-m(n/4-2(n-\alpha)^2)}.$$ Rearranging $n-\sqrt{n}/3 \le \alpha$ gives us $n/4-2(n-\alpha)^2 \ge n/36$, and hence $P_{\alpha}(A) \to 1$ as $|A| \to \infty$. This completes the proof of Theorem \ref{rankrandomgeneration}.

Recall that a matrix is invertible if and only if it has full rank. Then Corollary \ref{rankrandomgenerationcor} follows from applying Theorem \ref{rankrandomgeneration}$(ii)$ to the case where $\alpha=n$.

\section{The minimal number of generators}\label{EstimatingdA}

Let $d(A)$ be the minimal number of generators of a finite algebra $A$. Recall our convention that subalgebras of $A$ contain the multiplicative identity of $A$. For an ideal $I$ of $A$, we define $d(I)$ to be the minimal number of generators of $I$ as a non-unital algebra.

We begin with the following elementary observation. 

\begin{lemma}\label{easypeasy} Let $A$ be a finite algebra and let $I$ be an ideal of $A$. Then $$ d(A/I) \leq d(A) \leq d(A/I) +d(I) .$$
\begin{proof} Take the image/preimage of a generating set under the natural projection $A \to A/I$.
\end{proof}
\end{lemma}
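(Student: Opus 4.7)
Both inequalities should follow from elementary manipulations with the natural projection $\pi : A \to A/I$, so the plan is simply to verify that projecting and lifting generating sets behave as expected.

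For the left inequality, I would take a minimum generating set $x_1,\dots,x_d \in A$ of size $d = d(A)$ and consider its image $\pi(x_1),\dots,\pi(x_d)$ in $A/I$. Any unital subalgebra containing these images pulls back to a unital subalgebra of $A$ whose image under $\pi$ contains the full generating set; since $\pi$ is surjective, the images already generate $A/I$ as a unital $k$-algebra, giving $d(A/I) \leq d$.

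For the right inequality, let $\bar a_1,\dots,\bar a_s$ be a unital generating set of $A/I$ with $s = d(A/I)$, choose arbitrary preimages $a_1,\dots,a_s \in A$, and let $b_1,\dots,b_t$ be a generating set of $I$ as a non-unital algebra with $t = d(I)$. Put $B = \langle a_1,\dots,a_s,b_1,\dots,b_t \rangle$, the unital $k$-subalgebra they generate. Then $\pi(B)$ contains $\bar a_1,\dots,\bar a_s$ and is a unital subalgebra of $A/I$, hence $\pi(B) = A/I$, i.e.\ $B + I = A$. On the other hand, the non-unital subalgebra of $A$ generated by $b_1,\dots,b_t$ is precisely $I$ by choice of the $b_j$, and this is contained in $B$, so $I \subseteq B$. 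Combining $B + I = A$ with $I \subseteq B$ yields $B = A$, whence $d(A) \leq s + t = d(A/I) + d(I)$.

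There is no real obstacle here; the only point requiring a hint of care is the interaction of our unital convention with the non-unital generation of the ideal $I$, which is exactly why the definition of $d(I)$ was set up as generation of $I$ as a non-unital algebra. With that convention in place, both steps reduce to the standard surjection/lifting argument.
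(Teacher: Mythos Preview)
Your proof is correct and follows exactly the approach indicated by the paper, which merely says ``take the image/preimage of a generating set under the natural projection $A \to A/I$.'' You have simply unpacked that hint carefully, including the one subtle point about the non-unital convention for $d(I)$, so there is nothing to add.
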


We now characterise when $d(A)\leq 1$. Recall that $$\nu_q(x):=\frac{1}{x}\sum\limits_{d|x}\mu(d)q^{x/d}$$ where $\mu$ is the M\"{o}bius function.

\begin{lemma}\label{smalld} Let $A$ be a finite algebra. Then the following hold.

$(i)$ $d(A) =0$ if and only if $A=k$.

$(ii)$ If $d(A) =1$ then $\dim A >1$ and $A/J(A)=\prod_{i=1}^r (\F_{q^{m_i}})^{\alpha_i}$ where $1 \leq m_1 < ... <m_r$ and $\alpha_i \leq \nu_q(m_i)$ for each $i$. If $A$ is semisimple then the converse holds.

\begin{proof} $(i)$ We have $d(A)=0$ if and only if $A$ does not have a maximal subalgebra if and only if $A=k$.

$(ii)$ We first consider the case where $A$ is simple, say $A=M_n(q^m)$.

Let $d(A)=1$. Assume (for a contradiction) that $n>1$. Let $x$ be a generator of $A$. Let $\chi_n(x)$ be the characteristic polynomial of $X$ as a $n \times n$ matrix over $\F_{q^m}$. If $\chi_n(x)$ is $\F_{q^m}$-irreducible then, by Theorem $2.1$ of \cite{NP}, $\dim_k \F_{q^m}\langle x \rangle=mn$ 
and so $k\langle x \rangle$ is a proper subalgebra of $A$. If $\chi_n(x)$ is $\F_{q^m}$-reducible then $x$ is contained in a parabolic subalgebra of $A$. This is a contradiction, hence $n=1$. Conversely, let $A=\F_{q^m}$ for $m>1$. Any generator of the multiplicative group $A^{\times}$ then generates $A$ as an algebra.

Now consider the case where $A=S^{\alpha}$ for simple $S$.

Let $d(A)=1$. Then $S$ is a field by Lemma \ref{easypeasy} and the above arguments. Write $S=\F_{q^m}$. Recall from the proof of Theorem \ref{minP} that the number of generators of $S$ as a $k$-algebra is $m\nu_q(m)$. By Theorem $6.3$ of \cite{KMP}, $A$ can be generated by $1$ element if and only if $\alpha \leq \nu_q(m)$. The converse follows immediately.

Next consider the case where $A$ is semisimple, say $A = \prod_{i=1}^r S_i^{\alpha_i}$ where the $S_i$'s are pairwise non-isomorphic simple algebras. It follows from Proposition $2.12$ of \cite{KMP} that $d(A) = \max_{i=1,...,r} \{d(S_i^{\alpha_i})\}$. The result then follows from Lemma \ref{easypeasy} and the above arguments.

Finally, we consider the general case. If $d(A)=1$ then $d(A/J(A))\leq 1$ by Lemma \ref{easypeasy}. This completes the proof.
\end{proof}
\end{lemma}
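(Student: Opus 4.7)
The plan is to leverage a single structural observation: any algebra generated by one element is commutative. This cuts through most of the case analysis.

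For part $(i)$, our convention that subalgebras contain the multiplicative identity means the subalgebra generated by the empty set is $k \cdot 1_A \cong k$; hence $d(A) = 0$ if and only if $A = k$.

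For part $(ii)$, suppose $d(A) = 1$ with generator $x$. Since $k\langle x\rangle$ is a quotient of the commutative ring $k[T]$ under $T \mapsto x$, the algebra $A$ is commutative. By the Wedderburn--Malcev Principal Theorem together with Wedderburn's little theorem, every simple component of $A/J(A)$ must then be a field, so grouping isomorphic fields gives $A/J(A) \cong \prod_{i=1}^r \F_{q^{m_i}}^{\alpha_i}$ with $1 \leq m_1 < \cdots < m_r$. The inequality $\dim A > 1$ is automatic, since $d(A) = 1 \neq 0$ forces $A \neq k$ by $(i)$.

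To extract the bounds $\alpha_i \leq \nu_q(m_i)$, I apply Lemma \ref{easypeasy} to reduce to the semisimple commutative algebra $B := \prod_{i=1}^r \F_{q^{m_i}}^{\alpha_i}$; it then suffices to show that $d(B) \leq 1$ is equivalent to the stated numerical conditions. Given a candidate generator $y = (y_{i,j}) \in B$, the subalgebra $k\langle y\rangle$ is the image of the evaluation map $k[T] \to B$, whose kernel is the ideal generated by $\mathrm{lcm}\{\mu_{i,j}\}$, where $\mu_{i,j}$ is the minimal polynomial of $y_{i,j}$ over $k$. Each $\mu_{i,j}$ is irreducible of degree dividing $m_i$, and by the Chinese Remainder Theorem $\dim_k k\langle y\rangle = \sum_{i,j}\deg\mu_{i,j}$ when the $\mu_{i,j}$ are pairwise distinct (and strictly less otherwise). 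Comparing with $\dim_k B = \sum_i \alpha_i m_i$ forces each $\mu_{i,j}$ to have degree exactly $m_i$ and all of them to be pairwise distinct. Since polynomials of different degrees are automatically distinct, the constraint collapses to: for each $i$, there exist $\alpha_i$ distinct monic irreducible polynomials of degree $m_i$ over $k$. By Gauss's classical count, this holds precisely when $\alpha_i \leq \nu_q(m_i)$. The converse in the semisimple case is then constructive: pick such polynomials $p_{i,1}, \ldots, p_{i,\alpha_i}$, let $y_{i,j}$ be a root of $p_{i,j}$ in $\F_{q^{m_i}}$, and verify that $(y_{i,j})$ generates $B$.

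The main conceptual ingredient is the observation that $d(A) \leq 1$ forces $A$ to be commutative, which bypasses the matrix-algebra case directly (no need to separately rule out $M_n(q^m)$ for $n>1$ via characteristic-polynomial arguments). The remaining work is the CRT/dimension counting, which I do not expect to pose any serious obstacle; the only subtlety is keeping track that distinctness of the $\mu_{i,j}$ across different indices $i$ is automatic from the degree stratification, so the combinatorial constraint splits cleanly into one bound per field.
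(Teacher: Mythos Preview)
Your proof is correct and takes a genuinely different route from the paper's. The paper proceeds by a case analysis: first it treats simple $A=M_n(q^m)$ and rules out $n>1$ by examining whether the characteristic polynomial of a putative generator is irreducible (invoking Theorem~2.1 of \cite{NP}) or reducible (landing in a parabolic); then it handles $A=S^{\alpha}$ via Theorem~6.3 of \cite{KMP}; then it passes to general semisimple $A$ using Proposition~2.12 of \cite{KMP}; and finally it reduces the general case to the semisimple one via Lemma~\ref{easypeasy}. Your argument replaces all of this with the single observation that a one-generated algebra is a quotient of $k[T]$ and hence commutative, which immediately forces $A/J(A)$ to be a product of fields, and then a direct CRT/dimension count pins down the bounds $\alpha_i\le\nu_q(m_i)$. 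This is more self-contained (no appeals to \cite{NP} or \cite{KMP}) and more uniform; the paper's approach, by contrast, draws on machinery that is already in play elsewhere in the paper and so fits its overall narrative. The one place to be slightly careful in your write-up is the dimension inequality: you should make explicit that $\deg\operatorname{lcm}\{\mu_{i,j}\}\le\sum_{i,j}\deg\mu_{i,j}\le\sum_i\alpha_i m_i$, with equality throughout forcing both pairwise distinctness and $\deg\mu_{i,j}=m_i$ for every $i,j$; you have this, but it is stated somewhat telegraphically.
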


\begin{corollary}\label{smalldcor} Let $A$ be a finite simple algebra. Then $$d(A)= \begin{cases} 2 &\mbox{if } A \textnormal{ is not a field} \\
1 & \mbox{if } A \textnormal{ is a field and }A\neq k  \\
0 & \mbox{if } A=k \end{cases}.$$
\begin{proof} By Theorem $6.4$ of \cite{KMP}, $A$ is $2$-generated. The result then follows immediately from Lemma \ref{smalld}.
\end{proof}
\end{corollary}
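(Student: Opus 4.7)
The plan is to perform a case analysis on $A$, using Lemma \ref{smalld} to get the lower bounds on $d(A)$ and the cited result that every finite simple algebra is $2$-generated (Theorem 6.4 of \cite{KMP}) to get the upper bound.

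First, if $A = k$ then Lemma \ref{smalld}(i) gives $d(A) = 0$ directly, and this exhausts the case $A = k$.

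Second, suppose $A$ is a field with $A \neq k$, so $A = \F_{q^m}$ for some $m > 1$. Then $A$ is semisimple with $r = 1$, $m_1 = m > 1$, and $\alpha_1 = 1$; since $\nu_q(m) \geq 1$ (there exists at least one monic irreducible polynomial over $k$ of degree $m$), the condition $\alpha_1 \leq \nu_q(m_1)$ of Lemma \ref{smalld}(ii) is satisfied. The converse clause of Lemma \ref{smalld}(ii), which applies because $A$ is semisimple, then yields $d(A) \leq 1$. Combined with $d(A) \geq 1$ from Lemma \ref{smalld}(i) (since $A \neq k$), this gives $d(A) = 1$.

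Third, suppose $A$ is simple but not a field, so $A = M_n(q^m)$ with $n > 1$. Then $A$ is semisimple and $J(A) = 0$, but $A/J(A) = A$ is not a product of fields because $n > 1$. By the contrapositive of the forward direction of Lemma \ref{smalld}(ii), we must have $d(A) \geq 2$. The matching upper bound $d(A) \leq 2$ is Theorem 6.4 of \cite{KMP}, so $d(A) = 2$.

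The only nontrivial input is the $2$-generation of finite simple algebras cited from \cite{KMP}; everything else is an essentially immediate consequence of Lemma \ref{smalld}, so no further obstacle arises.
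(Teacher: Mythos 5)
Your proposal is correct and takes essentially the same route as the paper: the paper's proof is exactly the combination of the $2$-generation result (Theorem 6.4 of \cite{KMP}) with Lemma \ref{smalld}, and you have simply made the ``follows immediately'' step explicit by the three-way case analysis, including the needed observation that $\nu_q(m)\geq 1$ so the converse clause of Lemma \ref{smalld}(ii) applies to $\F_{q^m}$.
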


Let $P(A,l)$ be the probability that $l$ elements of $A$ chosen uniformly at random will generate $A$ as a $k$-algebra. Note that $P(A,l) \geq P(A,l_0)$ for all $l \geq l_0$. Recall our previous notation $P(A):=P(A,2)$.

\medskip

\noindent{\it Proof of Theorem \ref{mind}.}

\begin{proof} We first consider the case where $r=1$ and $J(A)=0$. Write $A=S^{\alpha}$ where $S=M_n(q^m)$.

If $d(A)=0$ then $A=k$ by Lemma \ref{smalld} and the result is immediate. If $d(A) = 1$ then $S$ is a field and $\alpha \leq \nu_q(m)$ by Lemmas \ref{easypeasy} and \ref{smalld}. Observe that $P(S,1)=m\nu_q(m)q^{-m} \leq 1$ as in the proof of Theorem \ref{minP}. So $f(A)=m^{-1}\log_q \alpha m  \leq m^{-1}\log_q P(S,1)+1 \leq 1$.

Henceforth assume that $d(A)\geq 2$. By Theorem $6.3$ of \cite{KMP}, $A$ can be generated by $l$ elements if and only if \begin{equation}\label{boundi} \alpha \leq \frac{q^{lmn^2}P(S,l)}{m|\!\PGL_n(q^m)|}.\end{equation}

Taking $l \geq 2$, we have $P(A,l) \geq P(A,2) \geq 3/8$ by Theorem \ref{minP}. Combining this with $(\ref{boundi})$ gives us $$d(A) \leq \left \lceil{m^{-1}n^{-2}\log_q \frac{8\alpha m}{3(q^m-1)}}\right \rceil +1 < m^{-1}n^{-2}\log_q \alpha m +3.42.$$

For the lower bound, combining $(\ref{boundi})$ with Lemma \ref{babylemma0} gives us $$d(A) > m^{-1}n^{-2}\log_q \frac{\phi(1/2)\alpha m }{q^m-1} +1 > m^{-1}n^{-2}\log_q \alpha m-2.33.$$

Now consider the case where $A$ is semisimple. That is, $A=\prod_{i=1}^r S_i^{\alpha_i}$ where the $S_i$'s are simple and pairwise non-isomorphic. It follows from Proposition $2.12$ of \cite{KMP} that $d(A) = \max_{i=1,...,r} \{d(S_i^{\alpha_i})\}$. The result follows immediately.

Finally, we consider the general case. That is, $A=S \oplus J(A)$ where $S$ is semisimple. The lower bound is immediate from Lemma \ref{easypeasy} and the semisimple case. For the upper bound, let $$0 =H_0 < H_1 < ... <H_{\mu}= J(A)$$ be an unrefinable chain of minimal length of $S$-subbimodules of $J(A)$. For each $i=1,...,\mu$, let $x_i \in H_i \setminus H_{i-1}$. Let $X$ be a generating set for $S$ of minimal cardinality. Using Theorem \ref{classthm2}, we see that $X \cup \{x_1,...,x_{\mu}\}$ is a set of generators for $A$. That is, $d(A) \leq d(S) + \mu$. This completes the proof of the theorem.
\end{proof}



\section{Positively finitely generated algebras}\label{PFGA}

In this section we investigate positively finitely generated profinite algebras.

Let $A$ be a profinite algebra. Recall the following definitions. For $d \ge 1$, $P(A,d)$ is the probability that $d$ randomly chosen elements of $A$ generate $A$. Let $m_n(A)$ be the number of index $n$ (open) maximal subalgebras of $A$.

In order to prove Theorems \ref{PFG} and \ref{PSAA} we need some preparations.

\begin{lemma} With the above notation we have $1-P(A,d) \le \sum_{n \ge 2} m_n(A)n^{-d}$.
\end{lemma}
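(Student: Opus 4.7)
The plan is to apply a straightforward union bound over the set of open maximal subalgebras of $A$, grouped by index. The key observation is that $d$ elements $x_1, \dots, x_d \in A$ fail to (topologically) generate $A$ if and only if the closed subalgebra they generate is contained in some proper open subalgebra, which happens if and only if they are jointly contained in some open maximal subalgebra $M$ of $A$.

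First I would write
\[
1 - P(A,d) = \Pr\bigl(\langle x_1,\dots,x_d\rangle \neq A\bigr) \le \sum_{M} \Pr(x_1,\dots,x_d \in M),
\]
where the sum ranges over all open maximal subalgebras $M$ of $A$. For a fixed $M$ of index $n$, the (normalised Haar) measure of $M$ in $A$ is $1/n$, and since the $x_i$ are chosen independently the probability that all $d$ of them land in $M$ is exactly $n^{-d}$.

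Then I would collect terms by index. Writing $\operatorname{Max}_n A$ for the set of open maximal subalgebras of index $n$, we have $|\operatorname{Max}_n A| = m_n(A)$, and since every proper open maximal subalgebra has some finite index $n \ge 2$, the sum becomes
\[
\sum_{M} n^{-d} = \sum_{n \ge 2} m_n(A)\, n^{-d},
\]
which yields the claimed inequality. In the case when $A$ is finite this is immediate; in the profinite case one only has to note that open subalgebras of finite index correspond bijectively to subalgebras of the discrete finite quotients of $A$, so the countability of the indexing set and the probability computation via Haar measure both go through.

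The only subtlety, and hence the point I would be most careful about, is justifying the reduction "fails to generate $\Rightarrow$ contained in an open maximal subalgebra" in the profinite setting: this uses the fact that a proper closed subalgebra of a profinite algebra is contained in some open maximal subalgebra, which follows from Zorn's lemma applied to the (nonempty) collection of proper open subalgebras containing it, ordered by inclusion, together with the fact that intersections of chains of open subalgebras of bounded index remain open. Everything else is routine.
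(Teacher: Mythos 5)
Your proposal is correct and follows essentially the same argument as the paper: a union bound over the open maximal subalgebras, with each subalgebra of index $n$ contributing $n^{-d}$ by independence of the $d$ choices, then grouping by index. The paper states this in two lines without the profinite justification you sketch; that extra care is fine, though in the Zorn-type step one should take unions (not intersections) along ascending chains of proper open subalgebras, which in fact stabilize since such subalgebras have finite index $\ge 2$.
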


\begin{proof} If randomly chosen $a_1, \ldots , a_d \in A$ do not generate $A$ (topologically)
then they all lie in some (open) maximal subalgebra $B$ of $A$. Therefore
$1 - P(A,d) \le \sum_{B \in \Max A} [A:B]^{-d}$ yielding the result.
\end{proof}

Given the algebra $A$ and a subalgebra $B < A$, define the \emph{core} $B_A$ of $B$ in $A$ to be
the maximal two-sided ideal $C$ of $A$ such that $C \subseteq B$. It exists (as the sum of all
ideals of $A$ which are contained in $B$) and it is unique.



\begin{lemma}\label{trivialcore} Let $A$ be a profinite algebra. Then, for all $n \ge 2$, $A$ has at most $6.93n$ maximal subalgebras of index $n$ with trivial core.
\end{lemma}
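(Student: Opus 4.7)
The plan is to reduce to finite algebras, apply the classification of maximal subalgebras (Theorems \ref{classthm1} and \ref{classthm2}) to pin down the very restricted possible shape of $A$ whenever $B\le A$ is a maximal subalgebra with trivial core, and then bound the count in each case using Lemma \ref{unitslemma}. The target constant $6.93$ is essentially $2/\phi(1/2)$, and will emerge from the $S1$ contribution in the simple case.

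First I would handle the profinite case by a topological reduction. Any open maximal subalgebra $B$ of finite index in a profinite algebra $A$ must contain an open two-sided ideal of $A$ (one is obtained from a basic neighborhood of $0$ inside $B$), and this open ideal lies in the core $B_A$. Hence if $B_A=0$ then the zero ideal of $A$ is open, forcing $A$ itself to be discrete and finite. So we may assume $A$ is finite.

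Next, given a trivial-core maximal subalgebra $B$ of a finite $A=(\prod_{i=1}^r S_i)\oplus J(A)$, I would run through Theorem \ref{classthm2}. For type $T1$, $B=(B_j\times\prod_{i\ne j}S_i)\oplus J(A)$ has core containing $\prod_{i\ne j}S_i$ and $J(A)$, so trivial core forces $r=1$ and $J(A)=0$; that is, $A$ is simple. For type $T2$ the same reasoning forces $r=2$, $J(A)=0$, $S_1\cong S_2$, and $B$ is a diagonal. For type $T3$, $B=S\oplus H$ has core containing $H$, so $H=0$; this makes $J(A)$ a minimal two-sided ideal of $A$ and $B=S$ a semisimple complement of $J(A)$ which, since $B_A=0$, contains no nonzero two-sided ideal of $A$. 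These are the only three structural cases to count.

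Finally I would bound the count in each case. For $A=M_{n'}(q^m)$ simple, Theorem \ref{classthm1} separates maximal subalgebras into types $S1$, $S2$, $S3$. The $S1$ indices $q^{ml(n'-l)}$ are at most $q^{m(n')^2/4}$, whereas $S2$ and $S3$ indices are at least $q^{m(n')^2/2}$, so a fixed index $n$ sits in at most one regime. In the $S1$ regime at most two conjugacy classes contribute (those of $l$ and $n'-l$), and each parabolic is self-normalising, so each class has $|A^\times|/|B^\times|<n/\phi(1/2)$ elements by Lemma \ref{unitslemma}; the total is strictly less than $2n/\phi(1/2)\le 6.93n$. In the $S2/S3$ regime the normaliser is strictly larger than $B^\times$ (an extra Galois factor of order $a$ for $S2$, and a factor of roughly $(q^m-1)/(q^{m/b}-1)$ for $S3$), and this comfortably keeps the count well below $6.93n$. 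For $A=S\times S$ the diagonal subalgebras are parametrised by $\mathrm{Aut}_k(S)$ and fall into $|\mathrm{Out}_k(S)|=m$ conjugacy classes; a direct estimate gives at most $m\cdot|S^\times|/(q^m-1)\le n$ subalgebras of index $n=|S|$. For the $T3$ case with $H=0$, Wedderburn--Malcev makes all semisimple complements of $J(A)$ conjugate under $1+J(A)$, so the number of such $B$ is at most $|1+J(A)|=|J(A)|=n$. Combining the three cases yields the bound $6.93n$, with the tight case being $S1$ in a simple algebra. The main technical obstacle is the last step of the structural classification, namely checking that in the $T3$ case with $H=0$ the hypothesis $B_A=0$ really does rule out some simple summand of $S$ being annihilated by $J(A)$ on both sides; even without this refinement, however, the crude bound $|J(A)|=n$ on the number of complements already suffices.
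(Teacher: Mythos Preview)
Your proposal is correct and follows essentially the same route as the paper: reduce to the three mutually exclusive structural cases for $A$ (simple; $S\times S$ with $S$ simple; $J(A)$ a simple bimodule), and bound the number of trivial-core maximals of index $n$ in each case by $2\phi(1/2)^{-1}n$, $n$, and $n$ respectively, using Lemma~\ref{unitslemma} for the first and Wedderburn--Malcev for the last. The only notable differences are that you reduce explicitly to finite $A$ (the paper instead invokes a profinite version of Theorem~\ref{classthm2}), and that the paper's simple-case argument is more concise: it observes directly from Theorem~\ref{classthm1} that at most two conjugacy classes of maximal subalgebras can have a given index, and bounds each class size by $|A^\times|/|B^\times|<\phi(1/2)^{-1}n$ via Lemma~\ref{unitslemma}, without separating $S1$ from $S2$/$S3$ or invoking normalisers.
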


\begin{proof} By our assumptions, $A=S \oplus J(A)$ where $S=\prod_{i \in I}S_i$ is a semisimple subalgebra of $A$ such that each $S_i$ is simple.
Let $B$ be a maximal subalgebra of $A$ of finite index $n \ge 2$, and let $C = B_A$.

It is straightforward to generalise Theorem \ref{classthm2} to profinite algebras, so $B$ is of type $(T1)$, $(T2)$ or $(T3)$. If $B$ is of type $(T1)$ then $C = \smash{\big(\prod_{i \ne j} S_i\big) \oplus J(A)}$ for some $j \in I$. If $B$ is of type $(T2)$ then $C = \big(\prod_{i \ne j_1,j_2} S_i\big) \oplus J(A)$ where $j_1, j_2 \in I$ are distinct and $S_{j_1} \cong S_{j_2}$. If $B$ is of type $(T3)$ then $C = \big(\prod_{i \neq j_1,j_2} S_i\big) \oplus J(B)$ for some (not necessarily distinct) $j_1, j_2 \in I$.

Henceforth assume that $B$ has trivial core, namely $C=0$.

Suppose $B$ is of type $(T1)$. Then $|I| = 1$ and $J(A)$ is trivial. That is, $A$ is finite and simple. It follows from Theorem \ref{classthm1} that $A$ has at most two conjugacy classes of maximal subalgebras of index $n$.
Lemma \ref{unitslemma} shows that, given $B$ as above, $A$ has at most $\frac{|A^{\times}|}{|B^{\times}|} < \phi(1/2)^{-1}\frac{|A|}{|B|} =\phi(1/2)^{-1}n$ subalgebras which are conjugate to $B$. Note that $2\phi(1/2)^{-1} \approx 6.925$.

Now let $B$ be of type $(T2)$. Then $|I|=2$ and $J(A)$ is trivial. That is, $A = S_1 \times S_2$ where $S_1, S_2$ are isomorphic finite simple algebras, and $B$ is a diagonal subalgebra of $A$, so $n = |S_1| = |S_2|$. The number of choices for $B$ is therefore bounded above by the number of isomorphisms from $S_1$ to $S_2$, which in turn is bounded above by $n$.

Finally, suppose $B$ is of type $(T3)$. Then $|I| =1$ or $2$ and $J(A)$ is a simple $S$-bimodule with $|J(A)|=n$. By the Wedderburn-Malcev Principal Theorem, $B=S^{1+z}$ for some $z \in J(A)$. So there are precisely $n$ choices for $B$.

Altogether we see that the number of maximal subalgebras of $A$ of index $n$ with trivial core is at most $6.93n$.
\end{proof}


To illustrate Lemma \ref{trivialcore}, consider the case where $A = M_2(q)$. It is easy to check that $m_n(A) \le n+1$ for all $n > 1$.

Analogous to the Haar measure for locally compact groups, every profinite algebra admits a unique left (additive) translation invariant probability measure.

\begin{lemma}\label{probindep} Let $B_1, B_2$ be maximal subalgebras of $A$ with cores $C_1, C_2$ respectively.
Suppose $C_1 \ne C_2$ and let $d$ be a positive integer. Then the events $B_1^d, B_2^d$ in $A^d$
are independent.
\end{lemma}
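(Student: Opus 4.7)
The plan is to reduce the independence statement to the purely algebraic identity $B_1 + B_2 = A$ (sum of additive subgroups) and then prove this equality using the core hypothesis.

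First I would unpack what independence means in terms of the (additive) Haar measure $\mu$ on the profinite algebra $A$. Since $A^d$ carries the product measure, $\mu^d(B_i^d) = \mu(B_i)^d = [A:B_i]^{-d}$, and $B_1^d \cap B_2^d = (B_1 \cap B_2)^d$. So independence is equivalent to $[A : B_1 \cap B_2] = [A:B_1]\cdot[A:B_2]$. Applying the second isomorphism theorem in the abelian group $(A,+)$ gives $[A:B_1\cap B_2] = [A:B_1]\cdot [B_1 + B_2 : B_2]$, so what must be shown is exactly that $B_1 + B_2 = A$.

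The main algebraic step — which I expect to be the only real content of the proof — is to show this additive equality from the assumption $C_1 \ne C_2$. Without loss of generality $C_1 \not\subseteq C_2$. Since $C_2$ is by definition the \emph{largest} two-sided ideal of $A$ contained in $B_2$, and $C_1$ is itself a two-sided ideal of $A$, the relation $C_1 \not\subseteq C_2$ forces $C_1 \not\subseteq B_2$. Now I would observe the key point that, although the sum of two subalgebras need not be a subalgebra, the sum $C_1 + B_2$ \emph{is} a unital subalgebra of $A$: for $c,c' \in C_1$ and $b,b' \in B_2$,
\[
(c+b)(c'+b') \;=\; (cc' + cb' + bc') + bb' \;\in\; C_1 + B_2,
\]
since $C_1$ is a two-sided ideal, and $1 \in B_2 \subseteq C_1 + B_2$.

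The conclusion is then immediate. Since $C_1 \not\subseteq B_2$, the subalgebra $C_1 + B_2$ properly contains $B_2$, so by maximality of $B_2$ we get $C_1 + B_2 = A$. Combined with $C_1 \subseteq B_1$, this yields $B_1 + B_2 \supseteq C_1 + B_2 = A$, as required. The hardest part is really just spotting that the ideal property of $C_1$ is precisely what makes the sum $C_1 + B_2$ closed under multiplication; everything else is a routine index computation.
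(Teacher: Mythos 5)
Your proposal is correct and follows essentially the same route as the paper: reduce independence to the additive identity $B_1+B_2=A$, then use that the core hypothesis forces one core (an ideal) not to lie in the other maximal subalgebra, so that ideal plus that subalgebra is a subalgebra properly containing it, hence all of $A$. The only cosmetic differences are that you work directly with the Haar measure and the index computation while the paper first passes to the finite-dimensional quotient $A/(C_1\cap C_2)$ and compares dimensions, and that you spell out why the ideal-plus-subalgebra sum is multiplicatively closed, which the paper leaves implicit.
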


\begin{proof} Replacing $A$ with $A/(C_1 \cap C_2)$ we may assume that $\dim A < \infty$.
Clearly $B_1^d, B_2^d$ are independent if and only if $B_1, B_2$ are, namely if and only if
$[A: B_1 \cap B_2] = [A:B_1][A:B_2]$ if and only if $\dim A = \dim B_1 + \dim B_2 - \dim (B_1 \cap B_2)$
if and only if $\dim A = \dim (B_1+B_2)$ if and only if $B_1+B_2 = A$.

Suppose $C_1 \ne C_2$. Without loss of generality, $C_2 \not\subset C_1$. Then $C_2$ is an ideal
of $A$ which is not contained in the maximal subalgebra $B_1$. Hence $B_1 + C_2$ is a subalgebra
of $A$ which properly contains $B_1$. It follows that $B_1 + C_2 = A$, which implies $B_1+B_2=A$.
We conclude that $B_1, B_2$ are independent.
\end{proof}

\medskip

\noindent
{\it Proof of Theorem \ref{PFG}.}

\begin{proof}

PMSG easily implies PFG.
Indeed, if $m_n(A) \le n^b$ for some positive integer $b$ and all $n \ge 2$, then
\[
1 - P(A, b+2) \le \sum_{n \ge 2} m_n(A)n^{-(b+2)} \le \sum_{n \ge 2} n^{-2} = \pi^2/6 - 1 < 1,
\]
so $P(A, b+2) > 0$.

Now, suppose $A$ is PFG, and let $d \in \N$ such that $P(A,d) > 0$.
We shall show that $A$ has PMSG.

Let $C_i$ be a list of the distinct cores of maximal subalgebras of $A$.
For each $i$ choose a maximal subalgebra $B_i$ of $A$ with core $C_i$.
For each $n \ge 2$ let $c_n(A)$ denote the number of maximal subalgebras
of index $n$ obtained in this way.

Consider $X = A^d$ as a probability space and the events $X_i = B_i^d < X$. By Lemma \ref{probindep} the events $X_i$ are pairwise independent.
Let $p_i = [A:B_i]^{-d}$, the probability of the event $X_i$.

By the Borel-Cantelli Lemma, if $\sum_i p_i = \infty$, then, with probability 1, infinitely many events $X_i$ occur.
This implies that a random $d$-tuple in $A^d$ generates $A$ with probability 0, a contradiction to $P(A,d) > 0$.
We conclude that $\sum_i p_i$ converges. Moreover, by the effective version of the Borel-Cantelli Lemma
we have
\[
\sum_i p_i \le P(A,d)^{-1}.
\]
We deduce that
\[
\sum_{n \ge 2} c_n(A)n^{-d} = \sum_i [A:B_i]^{-d} \le P(A,d)^{-1},
\]
so $c_n(A)n^{-d} \le P(A,d)^{-1}$, which yields
\[
c_n(A) \le P(A,d)^{-1} n^d
\]
for all $n \ge 2$.

Now, by Lemma \ref{trivialcore}, there are at most $6.93n$ maximal subgroups of $A$ of index $n$
with a given core $C_i$. This yields
\begin{equation}\label{strong}
m_n(A) \le 6.93nc_n(A) \le 6.93P(A,d)^{-1} n^{d+1}.
\end{equation}
In particular, $A$ has PMSG as required.

Finally, assume that $A$ is infinite and recall that $d_0(A):=\min\{d \ge 1 \hspace{0.5mm}|\hspace{0.5mm} P(A,d) > 0\}$. It then follows from equation $(\ref{strong})$ that
\[
M^*(A)= \limsup_{n > 1} \log m_n(A) / \log n \le d_0(A)+1,
\]
establishing the second statement of Theorem \ref{PFG}.
\end{proof}

We now move on to the proof of Theorem \ref{PSAA}.

\begin{propn}\label{mn} Let $A$ be a finite algebra, say $A=S \oplus J(A)$ where $S=\prod_{i=1}^rS_i$ and $S_i=M_{n_i}(q^{m_i})$ for each $i$. Then, for all $n > 1$ we have
\[
m_n(A) \le n\big(6.93r+r(r-1)/2+r^2n^{d(A)} \big).
\]
\end{propn}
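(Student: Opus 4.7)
The plan is to decompose the maximal subalgebras of $A$ via Theorem~\ref{classthm2} into the three types $(T1)$, $(T2)$, $(T3)$ and to bound each contribution to $m_n(A)$ separately. Types $(T1)$ and $(T2)$ will yield the summands $6.93rn$ and $r(r-1)n/2$ essentially by invoking Lemma~\ref{trivialcore}, while type $(T3)$ will contribute $r^2 n^{d(A)+1}$ and requires the bulk of the argument.

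For type $(T1)$, a maximal subalgebra of $A$ is determined by an index $j\in\{1,\ldots,r\}$ together with a maximal subalgebra $B_j\le S_j$, and its index in $A$ equals $[S_j:B_j]$. Since $S_j$ is simple, every maximal subalgebra of $S_j$ has trivial core in $S_j$, so Lemma~\ref{trivialcore} applied to $S_j$ gives at most $6.93n$ such $B_j$ of index $n$; summing over $j$ yields the first term. For type $(T2)$, a maximal subalgebra corresponds to a pair $j_1<j_2$ with $S_{j_1}\cong S_{j_2}$ together with a diagonal embedding $S_{j_1 j_2}\hookrightarrow S_{j_1}\times S_{j_2}$; the index equals $|S_{j_1}|=n$, and (as in the proof of Lemma~\ref{trivialcore}) the number of diagonals for a fixed pair is bounded by the number of isomorphisms $S_{j_1}\to S_{j_2}$, hence by $n$. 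Summing over $\binom{r}{2}$ pairs gives the second term.

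For type $(T3)$, a maximal subalgebra has the form $B=S\oplus H$ where $H$ is a two-sided ideal of $A$ maximal subject to $H\subset J(A)$, so $M:=J(A)/H$ is a simple $A$-bimodule of size $n$. A Nakayama-type argument (if $J(A)M\neq 0$ then simplicity forces $J(A)M=M$, contradicting nilpotence of $J(A)$) shows that $J(A)M=MJ(A)=0$, hence $J(A)^2\subseteq H$ and $M$ is canonically a simple $S$-bimodule. Thus each eligible $H$ corresponds to a surjective $S$-bimodule homomorphism $J(A)/J(A)^2\twoheadrightarrow M$, unique up to automorphisms of $M$. A short generation argument shows that $J(A)/J(A)^2$ is generated as an $S$-bimodule by $d(A)$ elements: if $x_1,\ldots,x_{d(A)}$ generate $A$ as a $k$-algebra, if $V\subseteq J(A)/J(A)^2$ denotes the $S$-sub-bimodule generated by their images and $V'\subseteq J(A)$ denotes the preimage of $V$, then $S+V'$ is a subalgebra of $A$ containing every $x_i$, hence equals $A$, forcing $V=J(A)/J(A)^2$. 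Consequently $|\mathrm{Hom}(J(A)/J(A)^2,M)|\le n^{d(A)}$ for every iso class of such $M$.

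It remains to bound the number of isomorphism classes of simple $S$-bimodules of size $n$. Decomposing $S\otimes_k S^{op}=\prod_{i,j}S_i\otimes_k S_j^{op}$ and using $\F_{q^{m_i}}\otimes_k \F_{q^{m_j}}\cong \F_{q^{\mathrm{lcm}(m_i,m_j)}}^{\gcd(m_i,m_j)}$, each factor $S_i\otimes_k S_j^{op}$ splits into $\gcd(m_i,m_j)$ copies of $M_{n_i n_j}(\F_{q^{\mathrm{lcm}(m_i,m_j)}})$, whose unique simple module has size $q^{\mathrm{lcm}(m_i,m_j)n_in_j}$. If this equals $n$ then $\gcd(m_i,m_j)\le\mathrm{lcm}(m_i,m_j)\le\log_q n\le n$, and there are at most $r^2$ admissible pairs $(i,j)$; this gives at most $r^2 n$ iso classes. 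Multiplying by $n^{d(A)}$ yields a $(T3)$-contribution of $r^2 n^{d(A)+1}$, and summing the three bounds proves the claim. The main obstacle is this final bimodule-counting step via the tensor/Galois decomposition; everything else is routine once the reduction to surjections $J(A)/J(A)^2\twoheadrightarrow M$ is in place.
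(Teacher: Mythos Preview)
Your treatment of types $(T1)$ and $(T2)$ is correct and matches the paper. The gap is in type $(T3)$: you write ``a maximal subalgebra has the form $B=S\oplus H$'', but Theorem~\ref{classthm2} only says $B$ is \emph{conjugate} to $S\oplus H$. Since $H$ is an ideal, every conjugate of $S\oplus H$ has the form $S'\oplus H$ for some Wedderburn complement $S'$, and by Wedderburn--Malcev there are up to $|J(A)/H|=n$ such subalgebras for each fixed $H$ (this is exactly the extra factor the paper records). Your count therefore bounds the number of ideals $H$, not the number of maximal subalgebras of type $(T3)$; after inserting the missing factor your bound becomes $r^2 n^{d(A)+2}$, which is too large.

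Your bimodule approach can be repaired, but not with the crude estimate ``$\gcd(m_i,m_j)\le n$''. You already observed that the $H$'s with $J(A)/H\cong M$ correspond to surjections modulo $\mathrm{Aut}(M)$; use this division. For each pair $(i,j)$ the $\gcd(m_i,m_j)$ simple bimodules all have $|\mathrm{Aut}(M)|=q^{\mathrm{lcm}(m_i,m_j)}-1\ge \mathrm{lcm}(m_i,m_j)\ge \gcd(m_i,m_j)$, so the $(i,j)$-contribution to the number of $H$'s is at most $\gcd(m_i,m_j)\cdot n^{d(A)}/|\mathrm{Aut}(M)|\le n^{d(A)}$. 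Summing over pairs gives at most $r^2 n^{d(A)}$ ideals $H$, and now the missing factor of $n$ yields the desired $r^2 n^{d(A)+1}$. The paper avoids this arithmetic by counting isomorphism classes of the quotient $\overline{A}=A/B_A$ rather than of the bimodule $V$: the $\gcd(m_i,m_j)$ Galois-twisted bimodules produce isomorphic algebras $\overline{A}$, so there are at most $r^2$ such classes, and the role of your bimodule homomorphisms is played by derivations $A\to V$ determined on $d(A)$ generators.
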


\begin{proof} Let $B < A$ be a maximal subalgebra of index $n$ and let $C = B_A$ be its core.

If $B$ is of type $(T1)$ then $C = \big(\prod_{i \ne j} S_i\big) \oplus J(A)$ for some $1 \le j \le r$, so there are $r$ possibilities for $C$.
Given $C$, $B/C < A/C \cong S_j$ is a maximal subalgebra of index $n$, so by Lemma \ref{trivialcore} there are at most $6.93n$ possibilities for $B/C$, hence for $B$ given $C$. We conclude that there are at most $6.93rn$ possibilites for $B$ of type $(T1)$.

Suppose $B$ is of type $(T2)$. Then $C = \big(\prod_{i \ne j_1,j_2} S_i\big) \oplus J(A)$ for some $1 \le j_1 < j_2 \le r$, so there are $r(r-1)/2$
possibilities for $C$. As follows from the proof of Lemma \ref{trivialcore}, there are at most $n$ possibilities for $B$ given $C$.
Hence there are at most $nr(r-1)/2$ possibilities for $B$ in this case.

Finally, let $B$ be of type $(T3)$. Then $C = \big(\prod_{i \ne j_1,j_2} S_i\big) \oplus H$ for some (not necessarily distinct) integers $1 \le j_1 \leq j_2 \le r$ and some two-sided ideal $H$ of $A$ that is maximal with respect to being contained in $J(A)$. We first want to count the possibilities for $H$. 

Observe that $B^{\times}$ is a maximal subgroup of $A^{\times}$. 
Then $A^{\times}$ acts primitively by left-multiplication on the set of left cosets $A^{\times}/B^{\times}$ (see for instance $1.7(b)$ of \cite{C}). In other words, $A^{\times}/B^{\times}$ is a primitive (left) $A^{\times}$-space. Applying $1.3$ of \cite{C} then tells us that the conjugacy class of $B^{\times}$ in $A^{\times}$, and hence $H$, is uniquely determined by the isomorphism class of $A^{\times}/B^{\times}$ as an $A^{\times}$-space. 

Consider the (non-unital) quotient algebra $V:=J(A)/H$. We equip $V$ with the structure of an $A$-bimodule under the action $v \mapsto ava'$ for $a, a' \in A$ and $v \in V$. Note that $V$ is a simple $A$-bimodule since $B$ is a maximal subalgebra of $A$, and hence $J(A)$ acts trivially on $V$ (on both the left and the right) by Nakayama's lemma. So $V^2=0$. 

Next consider the quotient algebra $A/C=:\overline{A}$ and the natural projection $\rho:A \to \overline{A}$. Let $\overline{S}$ be a maximal semisimple subalgebra of $\overline{A}$. Since our field $k$ is perfect, $\overline{A}/J(\overline{A}) \cong \overline{S}$ is separable. Observe that $V$ is isomorphic to $J(\overline{A})$ as a (non-unital) algebra. Then applying Proposition $11.7$ of \cite{Pi} (a version of Wedderburn's Principal theorem) gives us a semidirect product of algebras $\overline{A} = V \rtimes \overline{S}$ (that is, $(v,s)(v',s')=(vs'+sv',ss')$ for all $v,v \in V$ and $s,s' \in \overline{S}$, and $(0,1)$ is the unity element). 

The core of $\overline{A}$ is trivial, and so $\overline{S}$ is isomorphic to either $\smash{S_{j_1}}$ (if $j_1=j_2$) or to $\smash{S_{j_1} \times S_{j_2}}$ (if $j_1 \neq j_2$). One can interpret $V$ as the natural left $\smash{S_{j_1} \otimes (S_{j_2})^{op}}$-module (refer to $\S 10.1$ of \cite{Pi}). It follows that there are at most $r^2$ possibilities for the isomorphism class of $\overline{A}$.

Taking the respective groups of units of $\overline{A} = V \rtimes \overline{S}$ gives us a semidirect product of groups $\smash{\overline{A}^{\times} = V \rtimes \overline{S}^{\times}}$ (considering $V$ to be its additive group). 
The natural projection $\rho:A \to \overline{A}$ induces a (left) action of $A^{\times}$ by permutations on $V$ as follows. Let $v \in V$ and $a \in A^{\times}$, say $\rho(a)=(v',s)$ for $v' \in V$ and $\smash{s \in \overline{S}^{\times}}$, and define $a \cdot v:=svs^{-1}+v's^{-1}$. It is then easy to see that the map $V \to A^{\times}/B^{\times}$ given by $x+H \mapsto (x+1)B^{\times}$ for $x \in J(A)$ is an isomorphism of $A^{\times}$-spaces. 


The isomorphism class of $V$ as an $A^{\times}$-space is uniquely determined by the isomorphism class of $\smash{\overline{A}^{\times}}$ along with a $1$-cocycle $A^{\times} \to V$, which arises from a derivation $\delta:A \to V$. 
Certainly $\smash{\overline{A}^{\times}}$ is determined up to isomorphism by $\overline{A}$ and $\delta:A \to V$ is determined by its values on the generators of $A$. By assumption, $|V|=n$. In summary, the number of possibilities for $H$ is bounded above by $r^2n^{d(A)}$.

For a given $H$, by Malcev's contribution to the Principal Theorem and since $B \cap J(A) = J(B) = H$, there are precisely $|J(A)/H|= n$ possibilities for $B$. So there are at most $r^2n^{d(A)+1}$ possibilities for $B$ of type $(T3)$. This completes the proof.
\end{proof}

Let $x = (x_d)_{d\in \N}$ be a sequence of elements of $A$ that are chosen randomly, uniformly and independently. Define a random variable $\tau_A$ by
\[
\tau_A = \min\{d \ge 1 \hspace{0.5mm}|\hspace{0.5mm} \langle x_1, . . . , x_d \rangle = A\} \in \N \cup \{+\infty\}.
\]
Recall that $E(A)$ is the expected number of random elements of $A$ chosen uniformly and independently which generate $A$. Observe that
\begin{equation}\label{nice}
E(A) = \sum_{d \geq 1} dP(\tau_A =d) = \sum_{d \ge 1} \big(\sum_{c \ge d} P(\tau_A =c)\big) = \sum_{d \ge 0}\big(1-P(A,d)\big).
\end{equation}

Recall the definitions
\[
M(A) = \sup_{n > 1} \log{m_n(A)}/ \log{n}
\]
and, for any real number $\eta \ge 1$,
\[
V_{\eta}(A) = \min \{ d \ge 1: P(A,d) \ge \eta^{-1} \}.
\]

Let $\zeta$ denote the Riemann zeta function.

\medskip

\noindent{\it Proof of Theorem \ref{PSAA}.}

\begin{proof} We first prove $(i)$. Observe that $m_n(A) =0$ for $n < q$ (or indeed if $n < m(A)$). We claim that \begin{equation}\label{kik} m_n(A) \leq 2r^2n^{d(A)+1}.\end{equation} Assuming that $(\ref{kik})$ holds, we obtain \begin{align*}
M(A) & = \max_{n >1} \log{m_n(A)}/\log n \\
& \le \max_{n \geq q} \log (2r^2n^{d(A)+1})/ \log n \\
& \le 2\log_q r + d(A)+2.
\end{align*} It remains to show that the inequality $(\ref{kik})$ holds.

Recall from Proposition \ref{mn} that
\[
m_n(A) \le n\big(6.93r+r(r-1)/2+r^2n^{d(A)} \big).
\]
It follows immediately that $(\ref{kik})$ holds, unless (possibly) if $d(A) \leq 1$, or if $d(A)=2$, $n=q=2$ and $r=1$.

If $d(A)=0$ then $A \cong k$ and $m_n(A)=0$, and of course $(\ref{kik})$ holds.

Next assume that $d(A)=1$. Let $i \in \{1,...,r\}$. Since $d(A)=1$, Lemma \ref{smalld}$(ii)$ tells us that $S_i$ is a field. It then follows from Theorem \ref{classthm1} that any maximal subalgebra of $S_i$ is of type $(S3)$, and hence for any given $n$ there is at most one index $n$ maximal subalgebra of $S_i$. That is, $m_n(S_i) \leq 1$. We can use this to refine the proof of Proposition \ref{mn}, and obtain the sharpened inequality $$m_n(A) \le r+n\big(r(r-1)/2+r^2n \big).$$ It follows that $(\ref{kik})$ holds.

Finally, assume that $d(A)=2$, $n=q=2$ and $r=1$. Since $n=q$, the invariant $m_n(A)$ is counting the codimension $1$ subalgebras of $A$. Note that $S$ is simple since $r=1$, and so $A$ has no maximal subalgebras of type $(T2)$.

Assume for the moment that $S \cong k$. Then $A$ has no maximal subalgebras of type $(T1)$. By the proof of Proposition \ref{mn}, $A$ has at most $8$ maximal subalgebras of type $(T3)$. Hence $m_n(A) \le 8$ by Theorem \ref{classthm2}, so certainly $(\ref{kik})$ holds.

Now assume that $S \not\cong k$. Then any maximal subalgebra of type $(T3)$ of $A$ has codimension strictly greater than $1$. Codimension $1$ subalgebras of type $(T1)$ of $A$ are in bijection with codimension $1$ subalgebras of $S$, of which there are none unless $S \cong M_2(q)$ (in which case there are $3$ of them) 
or $S \cong \F_{q^2}$ (in which case there is $1$ of them). So $m_n(A) \le 3$ by Theorem \ref{classthm2}, and again $(\ref{kik})$ holds.

This completes the proof of part $(i)$.

$(ii)$. Let $d = \lceil M(A) + 2.02 \rceil$. It is immediate from the definition that $m_n(A) \le n^{M(A)}$ for all $n \ge 2$. Then we have
\[
1-P(A,d) \le \sum_{n \ge 2} m_n(A)n^{-d} \le  \sum_{n \ge 2} n^{-2.02} = \zeta(2.02) -1 < 1 - e^{-1}.
\]
Hence $P(A,d) > e^{-1}$, so $V(A) \le d$, as required. This gives us the upper bound.

Now let $d = V(A)$. Then $P(A,d) \ge e^{-1}$, and it follows from equation $(\ref{strong})$ that
\[
m_n(A) \le 6.93 e \cdot n^{V(A) + 1}.
\]
This implies that
\[
M(A) = \sup_{n > 1} \log{m_n(A)}/ \log{n} \le \log(6.93e) +V(A) + 1 < V(A) + 5.24.
\]
Since $V(A)$ is an integer, we have $V(A) \geq \lceil M(A)- 5.24\rceil$.

$(iii)$. Let $\eta \geq 1$ be a real number and set $d = V_{\eta}(A)$. By the same argument as in the proof of $(ii)$, we see that
\[
m_n(A) \le 6.93 \eta \cdot n^{V_{\eta}(A) + 1}.
\]
This implies
\begin{equation}\label{Maa}
M(A) = \sup_{n > 1} \log m_n(A) / \log n < V_{\eta}(A)+\log \eta + 3.80.
\end{equation}

Now consider the case where $\eta = 2^i$ for some positive integer $i$. Then $$M(A) < V_{2^i}(A)+i+3.80$$ by equation $(\ref{Maa})$. Denote $\alpha:=\lceil M(A)-4.80\rceil$. In particular, if $d=\alpha-i$ then $P(A,d) < 2^{-i}$. Combining this with equation $(\ref{nice})$ gives us
\[
E(A) \ge \sum_{d = 0}^{\alpha-1}\big(1-P(A,d)\big) > \sum_{d = 0}^{\alpha-1} \big(1-2^{d-\alpha}\big) \geq \alpha-1.
\]

This establishes the lower bound of $E(A)$. It remains to prove the upper bound.

Denote $l:=\lceil M(A) \rceil$. Since $m_n(A) \le n^l$, we have
\[
1-P(A,d) \le \sum_{n \ge 2} m_n(A)n^{-d} \le \sum_{n \ge 2} n^{l-d}.
\]

Denote $\beta := d-l$. Combining this with $(\ref{nice})$ gives us
\begin{align*} E(A) & \le l+2 + \sum_{d \ge l+2} \big(1-P(A,d)\big)  \\
& \le l+2 + \sum_{\beta \ge 2} \big( \sum_{n \ge 2} n^{-{\beta}}\big) \\
& \le l+2+ \sum_{n \ge 2} (\zeta(n)-1) \\
& = l+3. 
\end{align*}

This completes the proof of $(iii)$.
\end{proof}


\begin{thebibliography}{99}

\bibitem{B}
 {\sc E.A. Behrens},
   {\em Zur additiven Idealtheorie in nichtassoziativen Ringen},
Math. Zeit. 64 (1956), 169--182.

\bibitem{BPSh} {\sc A. Borovik, L. Pyber and A. Shalev},
{\em Maximal subgroups in finite and profinite groups},
Trans. Amer. Math. Soc. 348 (1996), 3745--3761.

\bibitem{C}
{\sc P.J. Cameron}, {\em Finite permutation groups and finite simple groups}, Bull. Lond. Math. Soc. 13 (1981), 1--22.

\bibitem{CH}
 {\sc L. Carlitz and J.H. Hodges},
   {\em Distribution of matrices in a finite field},
Pac. J. Math. 6 (1956), 225--230.

\bibitem{DLM}  {\sc E. Detomi, A. Lucchini and F. Morini}, {\em How many elements are needed to generate a finite group with good probability?}
Israel J. Math. 132 (2002), 29--44.


\bibitem{EK}
 {\sc A.G. Elashvili and V.G. Kac},
   {\em Classification of good gradings of simple Lie algebras},
Amer. Math. Soc. Transl. 213 (2005), 85--104.

\bibitem{FH}
 {\sc N.J. Fine and I.N. Herstein},
   {\em The probability that a matrix be nilpotent},
Illinois J. Math. 2 (1958), 499--504.

\bibitem{IS}
 {\sc M.C. Iovanov and A.H. Sistko},
   {\em  Maximal subalgebras of finite-dimensional algebras},
Forum Mathematicum 31 (2019), 1283--1304.

\bibitem{JP} {\sc A. Jaikin-Zapirain and L. Pyber}, {\em Random generation of finite and profinite groups and group enumeration},
Ann. of Math. 173 (2011), 769--814.

\bibitem{KMP}
 {\sc R. Kravchenko, M. Mazur and B.V. Petrenko},
   {\em  On the smallest number of generators and the probability of generating an algebra},
Algebra \& Number Theory 6 (2012), 243--291.

\bibitem{KMP1}
 {\sc R. Kravchenko, M. Mazur and B.V. Petrenko},
   {\em  Generators of maximal orders},
J. Algebra 426 (2015), 32--50.

\bibitem{La}
 {\sc G. Landsberg},
   {\em Ueber eine Anzahlbestimmung und eine damit zusammenhängende Reihe},
Journal für die reine und angewandte Mathematik 111 (1893), 87--88.



\bibitem{L} {\sc M.W. Liebeck}, {\em Probabilistic and asymptotic aspects of finite simple groups}, in Probabilistic group
theory, combinatorics and computing (eds. A. Detinko et al), Springer Lecture Notes in Math. 2070 (2013), 1--34.

\bibitem{LiSh}  {\sc M.W. Liebeck and A. Shalev}, {\em Simple groups, probabilistic methods, and a conjecture of Kantor and Lubotzky}, J. Algebra 184 (1996), 31--57.


\bibitem{Lu}
{\sc A. Lubotzky}, {\em The expected number of random elements to generate a finite group}, J. Algebra 257 (2002), 452--459.

\bibitem{LM}
{\sc A. Lucchini and M. Moscatiello}, {\em Generation of finite groups and maximal subgroup growth}, Adv. Group Theory \& App. 9 (2020), 39--49.

\bibitem{MSh}
{\sc A. Mann and A. Shalev} {\em Simple groups, maximal subgroups, and probabilistic aspects of profinite groups},
Israel J. Math. 96 (1996), 449--468.

\bibitem{MQR}
{\sc N.E. Menezes, M. Quick and C.M. Roney-Dougal} {\em The probability of generating a finite simple group},
Israel J. Math. 198 (2013), 371--392.

\bibitem{NP}
 {\sc P.M. Neumann and C.E. Praeger},
   {\em Cyclic matrices over finite fields},
Journal of the London Math. Soc. 52 (1995), 263--284.

\bibitem{P} {\sc I. Pak}, {\em On the probability of generating a finite group}, Preprint (1999).

\bibitem{Pi}
 {\sc R.S. Pierce},
   {\em Associative algebras},
Grad. texts in math. 88, Springer, New York (1982).

\bibitem{Po} {\sc C. Pomerance}, {\em The expected number of random elements to generate a finite abelian group}, Period. Math. Hungar. 43 (2001), 191--198.

\bibitem{R} {\sc L.H.~Rowen}, {\em Ring Theory}, Student Edition, Academic Press, San Diego (1991).

\bibitem{Re}
 {\sc I. Reiner},
   {\em On the number of matrices with given characteristic polynomial},
Illinois Journal of Mathematics 5 (1961), 324--329.

\bibitem{Sh} {\sc A. Shalev}, {\em Probabilistic group theory}, in Groups St Andrews 1997 in Bath, II, London Math. Soc.
Lecture Note Series 261 (1999), 648--678.

\end{thebibliography}
\end{document}